\newcommand{\GL}{\operatorname{GL}}
\newcommand{\Evs}{\operatorname{Evs}}
\newcommand{\RPsi}{\operatorname{R\hskip-1pt\Psi}}
\newcommand{\Der}{\operatorname{D}}
\newcommand{\Per}{\operatorname{Per}}
\newcommand{\Loc}{\operatorname{Loc}}
\newcommand{\Rep}{\operatorname{Rep}}
\newcommand{\Lift}{\operatorname{Lift}}
\newcommand{\LS}{\operatorname{LS}}
\newcommand{\trace}{\operatorname{trace}}
\newcommand{\rank}{\operatorname{rank}}
\newcommand{\nocontentsline}[3]{}
\newcommand{\tocless}[2]{\bgroup\let\addcontentsline=\nocontentsline#1{#2}\egroup}
\newcommand{\Lgroup}[1]{\prescript{L}{}#1}
\newcommand{\CC}{\mathbb{C}}
\newcommand{\St}{\text{St}}
\newcommand{\ABV}{{\mbox{\raisebox{1pt}{\scalebox{0.5}{$\mathrm{ABV}$}}}}}
\newcommand{\op}[1]{\operatorname{#1}}
\newcommand{\1}{{\mathbbm{1}}}
\newcommand{\dualgroup}[1]{\widehat{#1}}
\newcommand{\IC}{{\mathcal{IC}\hskip-1pt }}
\newcommand{\Sym}{\operatorname{Sym}}
\newcommand{\Ind}{\operatorname{Ind}}
\newcommand{\diag}{\operatorname{diag}}
\newcommand{\tq}{\ \vert \ }
\newcommand{\SL}{\operatorname{SL}}
\newcommand{\Lie}{\operatorname{Lie}}
\newcommand{\Ad}{\operatorname{Ad}}
\newcommand{\KS}{{\operatorname{KS}}}
\newtheorem{theorem}{Theorem}[section]
\newtheorem{theorem*}{Theorem}
\newtheorem{proposition}[theorem]{Proposition}
\newtheorem{hypothesis}[theorem]{Hypothesis}
\theoremstyle{definition}
\newtheorem{lemma}[theorem]{Lemma}
\newtheorem{example}[theorem]{Example}
\newtheorem{remark}[theorem]{Remark}
\tikzset{
  symbol/.style={
    draw=none,
    every to/.append style={
      edge node={node [sloped, allow upside down, auto=false]{$#1$}}}
  }
}
\title[Vogan's conjecture on Arthur packets for $\mathop{GL}_n$ over $p$-adic fields]{Proof of Vogan's conjecture on Arthur packets\\ for $\mathop{GL}_n$ over $p$-adic fields}
\author[C. Cunningham]{Clifton Cunningham}
\address{Department of Mathematics and Statistics, University of Calgary, 
2500 University Drive NW, 
Calgary, Alberta, 
T2N 1N4, 
Canada}
\email{clifton@automorphic.ca}
\thanks{Cunningham's research is supported by NSERC Discovery Grant RGPIN-2020-05220. He is also grateful to the \href{www.fields.utoronto.ca}{Fields Institute for Research in Mathematical Sciences} where some of this work was conducted and to Casa Matemática Oaxaca where it was first presented at a BIRS-CMO workshop.}
\author[M. Ray]{Mishty Ray}
\address{Department of Mathematics and Statistics, University of Calgary, 
2500 University Drive NW, 
Calgary, Alberta, 
T2N 1N4, 
Canada}
\email{mishty.ray@ucalgary.ca}
\thanks{Ray thanks café Stable in downtown Calgary for its excellent research environment of snowy concrete slabs and unintentionally slowed down Taylor Swift vinyl recordings.}
\date{\today}
\begin{document}

\begin{abstract}
In this paper we prove Vogan's conjecture on Arthur packets for general linear groups over $p$-adic fields, building on earlier work. The proof uses a special case of endoscopic lifting, adapted from the 1992 book by Adams, Barbasch and Vogan, where it was articulated for real groups. 
%
%
\end{abstract}

\maketitle

\tableofcontents

\section{Introduction}\label{introduction}

Thirty years ago, David Vogan conjectured a purely local description of A-packets for $p$-adic groups \cite{Vogan:Langlands}, closely related to a more developed theory for real groups by Adams, Barbasch and Vogan \cite{ABV}. 
While there is considerable evidence in the form of examples from \cite{CFMMX}*{Chapters 11-16}, Vogan's conjecture for $p$-adic groups remains open.
In this paper we prove this conjecture for general linear groups over $p$-adic fields (non-archimedean fields of charactersitic $0$), building on previous work in \cite{CR:irred}. We view this as a step toward proving Vogan's conjecture for the groups treated by Arthur in \cite{Arthur:book}, and the strategy of the proof given here reflects this objective. 

This conjecture was explicated in \cite{CFMMX}*{Conjecture~1} for a quasisplit symplectic or orthogonal $p$-adic group $G$ and attributed to Vogan. It predicts that, for every Arthur parameter \[\psi : W_F''\coloneqq W_F \times \SL_2(\CC) \times \SL_2(\CC) \to \Lgroup{G},\] the A-packet $\Pi_\psi(G)$ of representations of $G(F)$ coincides with the ABV-packet attached to the Langlands parameter $\phi_\psi$ determined by $\psi$. As defined \cite{CFMMX}*{Definition~1} and recalled in \cite{CR:irred}, the ABV-packet for $\phi_\psi$ is given by 
\[
\Pi^\ABV_{\phi_\psi}(G) \coloneqq
\left\{
\pi\in \Pi_{\lambda}(G) \tq 
\Evs_\psi \mathcal{P}(\pi) \ne 0
\right\},
\]
where 
\begin{itemize}
    \item
    $\Pi_{\lambda}(G)$ is the set of equivalence classes of irreducible, smooth representations of $G(F)$ with  infinitesimal parameter $\lambda$ determined by $\psi$;
    \item
    $\mathcal{P}(\pi)$ is the simple object in the category $\Per_{H_\lambda}(V_\lambda)$ of equivariant perverse sheaves on the moduli space $V_\lambda$ of Langlands parameters of matching $\pi$ under the enhanced local Langlands correspondence, with $H_\lambda \coloneqq Z_{\dualgroup{G}}(\lambda)$; 
    \item
    $\Evs_\psi$ is the functor 
    $$
    \Evs_\psi : \Per_{H_\lambda}(V_\lambda) \to \Loc_{H_\lambda}(T^*_{C_\psi}(V_{\lambda})^\text{reg}) \equiv \Rep(A_\psi),
    $$
    introduced in \cite{CFMMX}*{Section 7.10}, where $C_\psi$ is the $H_\lambda$-orbit of the point in $V_\lambda$ corresponding to $\phi_\psi$ and where $T^*_{C_\psi}(V_{\lambda})^\text{reg}$ is the regular part of the conormal bundle $T^*_{C_\psi}(V_\lambda)$.
\end{itemize}
These terms are all defined carefully in \cite{CFMMX} and revisited in \cite{CR:irred}.

The main result of this paper, Theorem~\ref{thm:main}, shows that
\begin{equation}\label{equation:main}
\Pi^\ABV_{\phi_\psi}(G) = \Pi_\psi(G),
\end{equation}
for every Arthur parameter $\psi$ for $G$, for $G = \GL_n$. 

Let us now sketch the proof Theorem~\ref{thm:main}. We begin with an arbitrary Arthur parameter $\psi$ of $G$, a map $W''_F \to \GL_n(\CC)$. This map, in general, has the form 
\[
\psi = \psi_1\oplus\cdots\oplus\psi_k.
\] 
Here each $\psi_i$ is an \textit{irreducible} Arthur parameter (irreducible as a representation of $W_F''$; see \cite{CR:irred}*{Section 6} for detailed definition). Set $m_i \coloneqq \dim \psi_{i}$. 
This decomposition naturally picks out a Levi subgroup $M \simeq \GL_{m_1}\times \cdots \times \GL_{m_k}$. Observe that $\dualgroup{M}$ is a Levi subgroup of $\dualgroup{G}$ containing the image of $\psi$.
Pick $s\in \dualgroup{G}$, of finite order, and therefore semisimple, so that $Z_{\dualgroup{G}}(s) = \dualgroup{M}$.
Let $\psi_M : W''_F \to \dualgroup{M}$ be the Arthur parameter for $M$ such that the following diagram commutes.
\[
\begin{tikzcd}
W''_F \arrow{rr}{\psi} \arrow{dr}[swap]{\psi_M}  && \dualgroup{G}\\
& \dualgroup{M} =Z_{\dualgroup{G}}(s) \arrow{ur} 
\end{tikzcd}
\]
Observe that $\psi_M$ is an irreducible Arthur parameter for $M$. 
Let $\lambda_M$ be the infinitesimal parameter of $\psi_M$.
Now the inclusion $Z_{\dualgroup{G}}(s) \to \dualgroup{G}$ induces an inclusion
\[
\varepsilon : V_{\lambda_M} \hookrightarrow V_\lambda,
\]
which is equivariant for the action of $H_{\lambda_M}\coloneqq Z_{\dualgroup{M}}(\lambda_M)$ on $V_{\lambda_M}$ and the action of $H_\lambda \coloneqq Z_{\dualgroup{G}}(\lambda)$ on $V_\lambda$.
Indeed, 
\[
V_{\lambda_M} = V_\lambda^s = \{ x\in V_{\lambda} \tq \Ad(s)x = x\}.
\]
Because $\psi_M$ is irreducible, earlier work \cite{CR:irred} establishes Vogan's conjecture for $\psi_M$:
\[
\Pi^\ABV_{\phi_{\psi_M}}(M) = \Pi_{\psi_M}(M).
\]

In order to lift this result from $M$ to $G$ we use the local Langlands correspondence to define a pairing between two Grothendieck groups that appear naturally on either side of the correspondence: on the spectral side, the category $\Rep^\text{fl}_{\lambda}(G)$ of finite-length representations of $G(F)$ with infinitesimal parameter $\lambda$ determined by $\psi$; on the Galois/geometric the category $\Per_{H_{\lambda}}(V_{\lambda})$ of $H_{\lambda}$-equivariant perverse sheaves on $V_{\lambda}$.
In Section~\ref{subsection:eta} we recall a virtual representation $\eta^{\Evs}_{\psi} \in K\Rep^\text{fl}_\lambda(G)$ that characterizes the ABV-packet $\Pi^\ABV_{\phi_\psi}(G)$, with the property that 
\[
\langle \eta^{\Evs}_{\psi}, [\mathcal{F}] \rangle_{\lambda} 
=
(-1)^{d(\psi)} \rank\left(\Evs_\psi \mathcal{F}\right), \qquad \forall \mathcal{F}\in \Per_{H_\lambda}(V_\lambda),
\]
where $d(\psi)$ is the dimension of the $H_\lambda$-orbit of $x_\psi$ in $V_\lambda$; see Proposition~\ref{prop:eta}.
Since A-packets are singletons for $\GL_n$, we set $\eta_{\psi} :=[\pi_{\psi}]$. Thus, to show that the ABV-packet coincides with this A-packet, it is enough to show
\[\eta^{\Evs}_{\psi}=\eta_{\psi}=[\pi_{\psi}].\]
As the pairing $\langle \cdot, \cdot \rangle_{\lambda}$ is non-degenerate, this is equivalent to showing that 
\[
\langle \eta^{\Evs}_{\psi}, [\mathcal{F}] \rangle_{\lambda} = \langle \eta_{\psi}, [\mathcal{F}] \rangle_{\lambda},
\]
for all $\mathcal{F}\in \Per_{H_\lambda}(V_\lambda)$. We do this by showing the three equalities below, for all $ \mathcal{F}\in \Per_{H_\lambda}(V_\lambda)$.
\[
\begin{tikzcd}
    {\langle \eta^{\Evs}_{\psi}, [\mathcal{F}] \rangle_{\lambda}}
    \arrow[equal]{d}[swap]{\text{Fixed-point formula \hskip3pt}}{\text{Prop.~}\ref{prop:M}}
    \arrow[dashed,equal]{rrrr}{\text{Vogan's conjecture for $\psi$}}[swap]{\text{Theorem~\ref{thm:main}}}
    &&&& 
    {\langle \eta_{\psi}, [\mathcal{F}] \rangle_{\lambda}}
    \\
    \langle \eta^{\Evs}_{\psi_M}, [\mathcal{F}\vert_{V_{\lambda_M}}] \rangle_{\lambda_M}
    \arrow[equal]{rrrr}[swap]{\text{Vogan's conjecture for $\psi_M$}}{\text{Prop.~}\ref{prop:M}}
    &&&& 
    \langle \eta_{\psi_M}, [\mathcal{F}\vert_{V_{\lambda_M}}] \rangle_{\lambda_M} 
    \arrow[equal]{u}[swap]{\text{\hskip3pt Endoscopic lifting}}{\text{Prop.~}\ref{prop: Lift on irreducibles of A type}}
\end{tikzcd}
\]

Even for general linear groups, non-Arthur type ABV-packets hold some surprises. 
Specifically, \cite{CFK} presents a non-Arthur type Langlands parameter $\phi_\KS$ for $\GL_{16}$ such that $\Pi^\ABV_{\phi_\KS}(\GL_{16})$ consists of two representations. 
Even more remarkably, the coronal representation $\pi_\psi$ in $\Pi^\ABV_{\phi_\KS}(\GL_{16})$ is of Arthur type. 
The main result of this paper implies $\Pi^\ABV_{\phi_\psi}(\GL_{16}) = \Pi_\psi(\GL_{16})$. 
This is an example, then, of the following containments.
\[
\begin{tikzcd}
& \Pi^\ABV_{\phi_\KS}(\GL_{16}) = \{ \pi_{\phi_\KS}, \pi_\psi \} & \\
\arrow[>->]{ur} \Pi_{\phi_\KS}(\GL_{16}) = \{ \pi_{\phi_\KS} \} && \arrow[>->]{ul} \{ \pi_\psi\} = \Pi_\psi(\GL_{16}) 
\end{tikzcd}
\]

We remark that ABV-packets for $p$-adic groups were not introduced in \cite{ABV}, since that book treats real groups only. In \cite{ABV}*{Theorem 24.8}, the authors define the functor $Q^\text{mic}$ using stratified Morse theory, and this can be used to characterize the microlocal packet attached to an L-parameter, as defined in Definition 19.5 of \textit{loc. cit.}. 
A similar approach is taken in \cite{Vogan:Langlands} which does treat $p$-adic groups, but uses a functor patterned after $Q^\text{mic}$.
We use the functor $\Evs$ instead, as explained above. 
We use the name ABV-packets simply to acknowledge the debt owed to the authors for the development of this theory. 
Likewise, we use the term \textit{``Vogan's conjecture''} for \cite{CFMMX}*{Conjecture 1} as it arose out of Vogan's work in \cite{Vogan:Langlands}. 

This paper deals with non-archimedean local fields $F$ of characteristic $0$ as the notion of A-packets for non-archimedean local fields of nonzero characteristic is unclear, to the best of our knowledge; however the Local Langlands correspondence and the geometric perspective exists for such fields and we can define an ABV-packet for characteristic nonzero fields in exactly the same manner. Thus, we propose that ABV-packets generalize local A-packets in the sense that the ABV-packet for a Langlands parameter of A-type can be used as an analogue to the corresponding A-packet for a non-archimedean local field of any characteristic. 


\subsection{Acknowledgements}
Some ideas in this paper are inspired by \cite{ABV} which treats real groups; we are happy to acknowledge these three authors, especially David Vogan, for his continued support for this project.
We also thank the entire Voganish Project research group, especially Bin Xu, Geoff Vooys, and Kristaps Balodis, for their contributions to this research. We also thank Matthew Sunohara, Tom Haines, and Peter Dillery for helpful conversations.

\subsection{Relation to other work}

This paper is part of the Voganish Project; for related results, we refer to \cite{CFMMX}, \cite{Mracek}, \cite{CFK}, \cite{CFZ:cubics}, \cite{CFZ:unipotent}, and \cite{CR:irred}. 



The main result of this paper, Theorem~\ref{thm:main}, can be proved by an argument different than the one presented here, as we now explain.
In \cite{CR:irred}*{Lemma 5.1} we proved the following geometric statement: if $\psi$ is a simple (irreducible with trivial restriction to $W_F$) Arthur parameter and $C_\psi$ is its associated $H_\lambda$-orbit in $V_\lambda$, then, for any $H_\lambda$-orbit $C$ in $V_\lambda$, $C_\psi \leq C$ and $C^*_\psi \leq C^*$ implies $C_\psi = C$; here we refer to the Zariski-closure relation on these orbits. In his MSc thesis written under the supervision of Andrew Fiori, Connor Riddlesden \cite{Riddlesden} extended this result to unramified Arthur parameters $\psi$, dropping the irreducibility condition appearing in \cite{CR:irred}*{Lemma 5.1}, though not applying to arbitrary Arthur parameters. When combined with unramification as it appears in \cite{CR:irred}*{Section 6, especially Lemma 6.8} and results from \cite{CFMMX}, these can be assembled to give an alternate proof of Theorem~\ref{thm:main}. 
While our proof of Theorem~\ref{thm:main} is perhaps more complicated than this alternate argument, we believe that our strategy is better adapted to generalizations, specifically, to proving Vogan's conjecture on A-packets to quasisplit classical groups and their pure inner forms. This belief is based on the fact that, in this paper, we have used endoscopic lifting
in a very special case. While it is characterized by parabolic induction in this case, we expect Langlands-Shelstad transfer to play a role more generally. 
Since Arthur's packets are characterized by Langlands-Shelstad transfer and Kottwitz-Shelstad transfer, together with certain normalizing choices referring to Whittaker models, we expect the geometric incarnation of both kinds of transfer to play an important role in extending our main result to other groups $G$.

\subsection{Notation}\label{subsection:notation}
In this work, for the most part, we follow notational conventions established in \cite{CR:irred}.
Here, $F$ is a non-archimedean local field of charactersitic $0$, also known as a $p$-adic field.
Henceforth, $G$ is $\GL_n$ and $P$ is a parabolic subgroup of $G$ with Levi subgroup $M$ and unipotent radical $N$; these statements are made in the category of algebraic groups over $F$, not their $F$-points, for which we use the notation $G(F)$, $P(F)$, etc. 

We use the notation 
\[
W_F' \coloneqq W_F\times \SL_2(\CC);
\]
this topological group is denoted by $L_F$ in Arthur's work.
We also use the notation 
\[
W_F'' \coloneqq W_F\times \SL_2(\CC)\times \SL_2(\CC);
\]
this topological group is denoted by $L'_F$ in Arthur's work.

Let $\dualgroup{G}$ denote the complex dual group of $G$, which for us is simply $\GL_n(\CC)$. 

By a Langlands parameter we mean an admissible homomorphism $\phi: W'_F \to \dualgroup{G}$, as defined in \cite{Borel:Corvallis}, for example.
We refer to $\dualgroup{G}$-conjugacy classes of Langlands parameters as L-parameters.

An infinitesimal parameter $\lambda: W_F\to \dualgroup{G}$ is simply a Langlands parameter with domain $W_F$.
The infinitesimal parameter $\lambda_\phi$ of a Langlands parameter $\phi$ is defined by 
\[
\lambda_\phi(w) \coloneqq \phi(w,\diag(|w|^{1/2}, |w|^{-1/2})).
\]

By an Arthur parameter we mean a homomorphism $\psi: W''_F \to \dualgroup{G}$ satisfying conditions explained in \cite{CFMMX}*{Section 3.5}, 
notably, that its restriction to $W_F$ is bounded.
The Langlands parameter $\phi_\psi$ is defined by $$\phi_\psi(w,x)\coloneqq \psi(w,x,\diag(|w|^{1/2}, |w|^{-1/2})).$$ 
The infinitesimal parameter $\lambda_\psi$ of $\psi$ is the infinitesimal parameter of $\phi_\psi$, thus given by \[
\lambda_\psi(w) \coloneqq \psi(w,\diag(|w|^{1/2}, |w|^{-1/2}),\diag(|w|^{1/2}, |w|^{-1/2})).
\]
When $\psi$ has been fixed, we set $\lambda=\lambda_\psi$.

For a smooth irreducible representation $\sigma$ of $M(F)$, the symbol $\Ind_P^G(\sigma)$ denotes the normalized parabolic induction of the representation $\sigma$ of the $F$-rational points $M(F)$ of the Levi subgroup $M$ of $P$; this means we inflate $\sigma$ from $M(F)$ to $P(F)$, twist by the modulus quasicharacter $\delta_P^{1/2}$ for the parabolic $P(F)$ and then induce from $P(F)$ to $G(F)$.

As in our earlier work, we use the notation $D_{H_\lambda}(V_\lambda)$ for the $H_\lambda$-equivariant derived category of $\ell$-adic sheaves on $V$; see  \cite{Lusztig:Cuspidal2}*{Section 1.10}, \cite{Achar:book}, or especially \cite{Vooys}*{Definition 3.1.14} for the definition of this category and \cite{Vooys}*{Theorem 9.0.38} for how to navigate different perspectives on this category.

In this paper we write $\rank(E)$ for the Euler characteristic of a graded vector space $E = \oplus_{i\in \mathbb{Z}} E^i$:
\[
\rank(E) = \sum_{i\in \mathbb{Z}}(-1)^i \dim(E^i).
\]
For $\mathcal{F}\in \Der_{H_\lambda}(V_\lambda)$ and $x\in V_\lambda$, we write $\mathcal{F}_x \in \Der_{Z_H(x)}(x)$ for the stalk of $\mathcal{F}$ at $x$ and $\mathcal{H}^\bullet_x\mathcal{F}$ for its cohomology complex, often viewed as a graded vector space.



We follow the conventions of \cite{BBD} regarding perverse sheaves; in particular, the restriction of $\IC(\mathcal{L}_C)$ to $C$ is $\mathcal{L}_C[\dim C]$, and complexes are shifted according to $(\mathcal{F}[n])^i = \mathcal{F}^{n+i}$.
The notation $\1_X$ is used to denote the constant sheaf on $X$.

\section{Preliminaries on Vogan's conjecture on A-packets}\label{section:eta}

In this section we revisit the definition of ABV-packets for $p$-adic groups from \cite{CFMMX}, also recalled in \cite{CR:irred} and cast it in a form that is adapted to the proof of the main result, Theorem~\ref{thm:main}. 
Instead of purely working over $\Pi_{\lambda}(G)$ and $\Per_{H_{\lambda}}(V_{\lambda})_{/\op{iso}}^{\op{simple}}$ as in the previous paper \cite{CR: irred}, we work over Grothendieck groups $K\Rep_{\lambda}^{\op{fl}}(G)$ and $K\Per_{H_{\lambda}}(V_{\lambda})$. We therefore spend some time explaining these groups. 


\subsection{Spectral side}

By the Langlands correspondence, the $\dualgroup{G}$-conjugacy class of $\lambda$ is identified with a cuspidal support $(L,\sigma)_{G} \in \Omega(G)$, the Bernstein variety for $G(F)$. We remark that $L$ is a Levi subgroup of $M$, where $M$ is determined by $\psi$ as in Section~\ref{introduction}.
Let $\Rep_\lambda(G)$ be the \emph{cuspidal support category} of smooth representations of $G(F)$ whose Jordan-Holder series is contained in the Jordan-Holder series of $\mathop{\text{Ind}}_P^G(\sigma)$ where $M$ is a parabolic subgroup of $G$ with  Levi component $M$.

Let $\Rep^\text{fl}_\lambda(G)$ be the subcategory of finite-length representations. The Grothendieck group $K\Rep_{\lambda}^{\op{fl}}(G)$ has two different bases - one consisting of smooth irreducible representations of $G(F)$ that share the infinitesimal parameter $\lambda$, and the other of standard representations attached to these irreducible representations. We use both these bases in the rest of the paper, so we recall the theory surrounding these objects below. 

\subsubsection{Irreducible representations}
Let $\Pi_\lambda(G) = \{ \pi_i \tq i\in I \}$ be the Jordan-Holder series of $\mathop{\text{Ind}}_P^G(\sigma)$;
this may be identified with the set of isomorphism classes of irreducible admissible representations of $G(F)$ with infinitesimal parameter $\lambda$. 

Smooth irreducible representations of $G(F)$ are classified using Zelevinsky theory \cite{Z2}. 
This was surveyed beautifully in \cite{kudla1994local} and we use his notation. For any representation $\pi$ of $\GL_m(F)$, let $\pi(i):=|\text{det}(\cdot)|^i\pi$. 
For a partition $n=\underbrace{m+m+\ldots+m}_{r\text{-times}}$ and a supercuspidal representation $\sigma$ of $\GL_m(F)$, we call 
\begin{equation}
    \label{segment}
    (\sigma, \sigma(1), \ldots, \sigma(r-1))=[\sigma,\sigma(r-1)]=:\Delta
\end{equation} 
a segment. 
This segment determines a representation of a standard parabolic subgroup $P$ of $G$ whose Levi subgroup is identified with $\underbrace{\GL_m \times \GL_m\times \cdots \times \GL_m}_{r\text{-times}}$. 
We can then carry out parabolic induction to obtain the induced representation $\Ind_P^{G}(\sigma \otimes \sigma(1)\otimes \cdots \otimes \sigma(r-1))$ of $G$, which has a unique irreducible quotient denoted by $Q(\Delta)$. We refer to $Q(\Delta)$ as the \textit{Langlands quotient} associated to $\Delta$. For a segment $\Delta=[\sigma,\sigma(r-1)]$, we set 
\[\Delta(x)\coloneqq [\sigma(x),\sigma(r-1+x)].\]
A multisegment is a multiset of segments. A segment $\Delta_1$ is said to \textit{precede} $\Delta_2$ if $\Delta_1 \not \subset \Delta_2$, $\Delta_2 \not \subset \Delta_1$, and there exists a positive integer $x$ so that \[\Delta_2=\Delta_1(x) \]
making $\Delta_1 \cup \Delta_2$ a segment. A multisegment $\{\Delta_1,\Delta_2,\cdots, \Delta_k\}$ where $\Delta_i$ does not precede $\Delta_j$ for any $i<j$ is said to satisfy the "does not precede" condition. 

Now let $\alpha=\{\Delta_1, \Delta_2, \ldots, \Delta_k\}$ be a multisegment satifying the "does not precede" condition. Let $P'$ denote the standard parabolic subgroup specified by $\alpha$. The Langlands classification theorem tells us that any smooth irreducible representation of $G$ occurs as a unique irreducible quotient of the parabolically induced representation $\Ind_{P'}^G(Q(\Delta_1)\otimes \cdots \otimes Q(\Delta_k))$ -- we denote that quotient by $Q(\Delta_1, \Delta_2, \ldots, \Delta_k)$ or $Q(\alpha)$ and refer to it as the \textit{Langlands quotient} associated to $\alpha$; see \cite{kudla1994local}*{Theorem 1.2.5}. Next, for integers $i<j$ we introduce the notation 
\begin{equation}
    \label{oursegment}
    [i,j]:=(|\cdot|^i, |\cdot|^{i+1}, \ldots, |\cdot|^j)
\end{equation}
 for a segment which is the special case of \eqref{segment} when we consider the partition $1+1+ \cdots +1$ and $\sigma$ to be the character $|\cdot|$ of $F^{\times}$. This notation may be extended to half integers $i<j$ as long as $j-i+1$ is a positive integer (this is the length of the segment). A segment of length 1 of the form $\{|\cdot|^i\}$ is just denoted $[i]$. 

\subsubsection{Standard representations}\label{ssec: standard representations}
Next, we review the notion of a standard representation, also known as standard module in the literature. While this is written down in many different places, we follow the exposition of \cite{Konno}. A standard representation of $G(F)$ corresponds to the data $(P,\nu, \tau)$, where $P=MN$ is a standard parabolic subgroup of $G$, $\nu \in \mathfrak{a}_{P}^{*,+}$, and $\tau$ a tempered representation of $M$. The definition of $\mathfrak{a}_{P}^{*,+}$ is given in Section 2.2 of \textit{loc. cit.}. The character $\nu$ corresponds to a $P$-positive unramified quasicharacter $\exp{\nu}$ of $M(F)$ as explained in Section 2.3 of \textit{loc. cit.}. The standard representation associated to this data is given by $\Ind_P^{G}(\tau \otimes  \exp{\nu} )$. This representation has a unique irreducible quotient (see Corollary 3.2 of \textit{loc. cit.}), say $\pi$. In this paper we use the notation
\[
\Delta(\pi) := \Ind_P^{G}(\tau \otimes  \exp{\nu} ),
\]
and call it the standard representation of $\pi$. 

Thus, $K\Rep^{\op{fl}}_{\lambda}(G)$ has two $\mathbb{Z}$-bases - one given by irreducible representations 
\[\{[\pi]: \pi \in \Pi_{\lambda}(G)\},\]
and the other given by standard representations
\[\{[\Delta(\pi)]: \pi \in \Pi_{\lambda}(G)\}.\] We note that the latter is true because every irreducible representation is the unique quotient of its standard representation, by the Langlands classification theorem.

\subsection{Galois/geometric side}

Recall that in this paper we make free use of \cite{CFMMX} and \cite{CR:irred}. 
In particular, for every infinitesimal parameter $\lambda : W_F \to \dualgroup{G}$, set
\[
V_\lambda \coloneqq \{ x\in \Lie \dualgroup{G} \tq \Ad(\lambda(w))(x) = |w| x,\  \forall w\in W_F \},
\]
and 
\[
H_\lambda \coloneqq \{ g\in \dualgroup{G} \tq \lambda(w) g \lambda(w)^{-1} = g,\  \forall w\in W_F \}.
\]
Then $V_\lambda$ is a prehomogeneous vector space for the $H_\lambda$-action inherited from conjugation in $\Lie\dualgroup{G}$, stratified into $H_\lambda$-orbits $\{ C_i \tq i \in I\}$.
Recall that $V_\lambda$ is a moduli space of Langlands parameters with infinitesimal parameter $\lambda$. Vogan's geometric perspective relates Langlands parameters to simple objects up to isomorphism in the category $\Per_{H_{\lambda}}(V_{\lambda})$. As on the spectral side, $K\Per_{H_{\lambda}}(V_{\lambda})$ has two bases - one consisting of simple perverse sheaves and the other of standard sheaves, which we explain below.

\subsubsection{Simple perverse sheaves}\label{simple perverse sheaves}
Simple objects in $\Per_{H_\lambda}(V_\lambda)$ are all of the form $\IC(\mathcal{L}_C)$, where $C$ is an $H_{\lambda}$-orbit in $V_{\lambda}$, and $\mathcal{L}_C \in \Loc_{H_\lambda}(V_\lambda)$ is a simple equivariant local system. For $G=\GL_n$, simple perverse sheaves are of the form $\IC(\1_C)$ as each orbit only has the trivial irreducible local system. We invoke the local Langlands correspondence for $G$ and write $\pi_\phi \in \Pi(G)$ for the isomorphism class of irreducible representation with Langlands parameter $\phi$, and likewise $C_{\pi}$ for the $H_\lambda$-orbit in $V_\lambda$ of parameters that correspond to $\pi$. We refer to the latter identification as the Vogan-Langlands correspondence in \cite{CR:irred}. We summarize the entire correspondence below:
\[    
\begin{tikzcd}
	{\Pi_{\lambda}(G(F))} & {\Phi_{\lambda}(G(F))} & {H_\lambda \operatorname{-orbits} \operatorname{ in}V} & {\operatorname{Perv}_{H_\lambda}(V_\lambda)^{\operatorname{simple}}_{/\operatorname{iso}},} & {} \\[-10pt]
	{\pi \hspace{2mm}} & \phi & {\hspace{2mm}C_{\phi} \hspace{2mm}} & {\hspace{2mm}\mathcal{P}(\pi)=\mathcal{IC}(\mathbb{1}_{C_{\phi}}).}
	\arrow[from=1-1, to=1-2]
	\arrow[from=1-2, to=1-3]
	\arrow[maps to, from=2-1, to=2-2]
	\arrow[maps to, from=2-2, to=2-3]
	\arrow[from=1-3, to=1-4]
	\arrow[maps to, from=2-3, to=2-4]
\end{tikzcd}
\]
Thus, there is a unique orbit $C_{\phi_{\psi}}$ in $V_\lambda$ attached to $\phi_{\psi}$. We shorten this notation to $C_{\psi}$. 
We write $\mathcal{P}(\pi)$ for $H_\lambda$-equivariant intersection cohomology complex on $V_\lambda$ determined by $\pi$ through $C_\pi$; note that $\mathcal{P}(\pi)$ is a simple simple object in the abelian category $\Per_{H_\lambda}(V_\lambda)$ of $H_\lambda$-equivariant perverse sheaves on $V_\lambda$.

\subsubsection{Standard sheaves}\label{standard sheaves}

For any $H_\lambda$-orbit $C\subseteq V_\lambda$ and any simple local system $\mathcal{L}_C$ on $C$, we introduce the notation $\mathcal{L}_C^\natural$ for the $H_\lambda$-equivariant sheaf on $V_\lambda$ with the defining property 
\[
\left(\mathcal{L}_C^\natural\right)\vert_{C'} 
=
\begin{cases}
    \mathcal{L}_C & C'=C,\\
    0 & C'\ne C.
\end{cases}
\] 
The Grothendieck group $K\Per_{H_{\lambda}}(V_{\lambda})$ has two $\mathbb{Z}$-bases, corresponding to the two t-structures in play: one consisting of simple perverse sheaves 
\[
\{[\mathcal{IC}(\1_{C})]: \text{ }C \text{ ranges over }H_{\lambda}\text{-orbits in }V_{\lambda}\}
\]
and the other of standard sheaves 
\[
\{[\1_C^\natural]: \text{ }C \text{ ranges over }H_{\lambda}\text{-orbits in }V_{\lambda}\};
\]
see \cite{Achar:book}*{Proposition A.9.5} and also \cite{ABV}*{p.12, paragraph 1}, for a related instance of this phenomenon.

\subsection{Dual Grothendieck groups}\label{subsection:Pairing}

For every infinitesimal parameter $\lambda :W_F \to \dualgroup{G}$, the local Langlands correspondence determines a perfect pairing between Grothendieck groups
\[
    K\Rep^\text{fl}_\lambda(G) \times K\Per_{H_\lambda}(V_\lambda) \to \mathbb{Z}
\]
defined by
\begin{equation}\label{eqn:pairing}
\langle [\pi],[\mathcal{P}]\rangle_{\lambda} =
\begin{cases}
    (-1)^{d(\pi)} & [\mathcal{P}] = [\mathcal{P}(\pi)],\\
    0 & \text{otherwise},
\end{cases}
\end{equation}
where $d(\pi) \coloneqq \dim C_{\pi}$.
Recall that $\mathcal{P}(\pi)$ denotes a simple object in $\Per_{H_\lambda}(V_\lambda)$ matching $\pi\in \Pi_\lambda(G)$ as in Section \ref{simple perverse sheaves}.
Notice that this perfect pairing between Grothendieck groups matches $\pi\in \Pi_\lambda(G)$ with the shifted perverse sheaf $\mathcal{P}(\pi)[-\dim C_\pi]$.
 
If we index $\Pi_\lambda(G) = \{ \pi_i \tq i\in I\}$ and likewise index isomorphism classes of simple objects in $\Per_{H_\lambda}(V_\lambda)$ by $\{ \IC(\1_{C_j}) \tq j\in I\}$ then the pairing above becomes
\[
\langle [\pi_i],[\IC(\1_{C_j})]\rangle_{\lambda} =
\begin{cases}
    (-1)^{\dim C_i} & i = j,\\
    0 & \text{otherwise}.
\end{cases}
\]
If we change the scalars to $\CC$ throughout, then the pairing extends:
\begin{equation}\label{complex pairing}
\langle \cdot,\cdot \rangle: K_{\CC}\Rep^\text{fl}_\lambda(G) \times K_{\CC}\Per_{H_\lambda}(V_\lambda) \to \mathbb{C},
\end{equation}
where we set $K_{\CC}\Rep^\text{fl}_\lambda(G)\coloneqq \CC \otimes_{\mathbb{Z}} K\Rep^\text{fl}_\lambda(G)$ and $K_{\CC}\Per_{H_\lambda}(V_\lambda) \coloneqq \CC \otimes_{\mathbb{Z}} K\Per_{H_\lambda}(V_\lambda)$.

\subsection{Kazhdan-Lusztig Hypothesis}\label{section:KLH}

In this section we state the Kazhdan-Lusztig Hypothesis for $p$-adic general linear groups.

For every irreducible $\pi\in \Rep^\text{fl}_\lambda(G)$, let $\Delta(\pi)$ be the standard representation for $\pi$; thus, in particular, $\pi$ is the unique irreducible quotient of $\Delta(\pi)$.
For every $\pi_i$ and $\pi_j$ in $\Pi_\lambda(G)$, let $m_{i j}$ denoted the muliplicity of $\pi_i$ in $\Delta(\pi_j)$; thus, in the Grothendieck group $K\Rep^\text{fl}_\lambda(G)$,
\[
[\Delta(\pi_j)] = \sum_{i\in I} m_{i j} [\pi_i].
\]
Let $m_\lambda = (m_{i j})$ be the matrix of these entries.
It is possible to order $I$, and thus the representations appearing in $\Pi_\lambda(G)$, so that the matrix $m$ is lower triangular, with diagonal entries $1$; consequently, the matrix $m$ is invertible.
Notice that $m$ is the change of basis matrix for the vector space $K_\CC\Rep^{\op{fl}}_{\lambda}(G)$, from the basis $\{ [\Delta(\pi_i)] \tq i\in I\}$ to $\{ [\pi_j] \tq j\in I\}$. 


Return to the infinitesimal parameter $\lambda : W_F\to \dualgroup{G}$ and consider the abelian category $\Per_{H_\lambda}(V_\lambda)$ of $H_\lambda$-equivariant perverse sheaves on $V_\lambda$. 
Simple objects in this category are the intersection cohomology complexes $\IC (\1_{C_i})$. 
For each $H_\lambda$-orbit $C_j$ in $V_\lambda$, pick a base point $x_j\in C_j$ and let $c_{ij}$ be the Euler characteristic of the stalk of $\IC (\1_{C_j})[-\dim C_j]$ at ${x_i}$: 
\begin{eqnarray*}
{c}_{ij} 
= (-1)^{\dim C_j}\rank\left(\mathcal{H}^\bullet_{x_i}\IC(\1_{C_j}))\right).
\end{eqnarray*}
Set ${c_\lambda} = ({c}_{ij})$.


\begin{hypothesis}[$p$-adic analogue of the Kazhdan-Lusztig Hypothesis]\label{hypothesis}
In the Grothendieck group $K\Rep(G)$ the multiplicity of the irreducible representation $\pi_i$ in the standard representation $\Delta(\pi_j)$ is given by
\[
m_{i j} 
= 
(-1)^{\dim C_i}\rank(\mathcal{H}^\bullet_{x_j}\IC \left(\1_{C_i})\right).
\]
Equivalently, the change of basis matrix in $K\Rep^\text{fl}_\lambda(G)$ from standard representations to irreducible representations is computed by the Euler characteristics of stalks of simple objects in $\Per_{H_\lambda}(V_\lambda)$:
\[
m_\lambda = \,^t{c_\lambda}. 
\]
\end{hypothesis}

Hypothesis~\ref{hypothesis} was first articulated in \cite{zelevinskii1981p}. It also appears in \cite{ABV}*{Chapter 15} for real groups and in \cite{Vogan:Langlands}*{Section 8} for real and $p$-adic groups, though there are some sign errors in the latter.
For general linear groups, Hypothesis~\ref{hypothesis} is a folklore theorem, often attributed to \cite{CG} or \cite{Lusztig:Cuspidal2}. 
More recently, Hypothesis~\ref{hypothesis}, as it applies here, is also asserted in \cite{Solleveld:pKLH}*{Theorem E, (b) and (c)}.
In this paper we take Hypothesis~\ref{hypothesis} as given.

Using this notation, we revisit the matrix ${c}$ from Section~\ref{section:KLH} and write
\[
[\IC(\1_{C_j})] 
= 
(-1)^{\dim(C_j)} \sum_{i\in I} {c}_{ij} [\1_{C_i}^\natural]
\]
in $K\Per_{H_\lambda}(V_\lambda)$.
Thus, $c_\lambda$ is the change of basis matrix for the vector space $K_\mathbb{C}\Rep_\lambda(G)$, from the basis $\{ [\1_{C_i}^\natural] \tq i\in I \}$ to the basis $\{ [\IC(\1_{C_i})[-\dim C_i] \tq i\in I \}$. 
Likewise,
\[
[\1_{C_j}^\natural] 
= 
\sum_{i\in I} 
({c_\lambda}^{-1})_{ij} (-1)^{\dim(C_i)} [\IC(\1_{C_i})]
\]
in $K_\CC\Per_{H_\lambda}(V_\lambda)$.

Since standard representations form a basis for the Grothendieck group $K\Rep^\text{fl}_\lambda(G)$, it is natural to ask what objects of $K\Per_{H_\lambda}(V_\lambda)$ are dual to this basis, under the pairing of Equation~\eqref{eqn:pairing}.
In the lemma below we use Hypothesis~\ref{hypothesis} to show that standard sheaves in $K\Per_{H_\lambda}(V_\lambda)$ are dual to standard representation $[\Delta(\pi)]$ in $K\Rep^\text{fl}_\lambda(G)$.

\begin{lemma}\label{lemma:St}
For any $\pi\in \Pi_\lambda(G)$ and any $H_\lambda$-orbit $C$ in $V_\lambda$,
\[
\langle [\Delta(\pi)],[\1_{C}^\natural]\rangle_{\lambda} =
\begin{cases}
    1 & [\mathcal{P}(\pi)] = [\IC(\1_C)];\\
    0 & \text{otherwise}.
\end{cases}
\]
\end{lemma}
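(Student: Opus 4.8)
The plan is to express both sides of the pairing in the two natural bases and use the Kazhdan–Lusztig Hypothesis (Hypothesis~\ref{hypothesis}) to match the change-of-basis matrices. Concretely, I would start from the expansions already assembled in the excerpt: on the spectral side $[\Delta(\pi_j)] = \sum_{i} m_{ij}[\pi_i]$, and on the geometric side $[\1_{C_j}^\natural] = \sum_{i}({c_\lambda}^{-1})_{ij}(-1)^{\dim C_i}[\IC(\1_{C_i})]$ in $K_\CC\Per_{H_\lambda}(V_\lambda)$. Plugging these into the defining pairing \eqref{eqn:pairing}, and using that $\langle [\pi_i],[\IC(\1_{C_k})]\rangle_\lambda = (-1)^{\dim C_i}\delta_{ik}$ under the indexing that matches $\pi_i \leftrightarrow C_i$ (i.e. $\mathcal{P}(\pi_i) = \IC(\1_{C_i})$), one computes
\[
\langle [\Delta(\pi_j)],[\1_{C_k}^\natural]\rangle_\lambda
= \sum_{i} m_{ij}\,({c_\lambda}^{-1})_{ik}(-1)^{\dim C_i}(-1)^{\dim C_i}
= \sum_{i} m_{ij}\,({c_\lambda}^{-1})_{ik} = (m_\lambda\, {c_\lambda}^{-1})_{jk}.
\]
Wait — I must be careful with transposes: since $m_{ij}$ is the $(i,j)$ entry, $\sum_i m_{ij}({c_\lambda}^{-1})_{ik} = ({}^t m_\lambda\, {c_\lambda}^{-1})_{jk}$, and Hypothesis~\ref{hypothesis} asserts $m_\lambda = {}^t c_\lambda$, i.e. ${}^t m_\lambda = c_\lambda$, so this equals $(c_\lambda\, {c_\lambda}^{-1})_{jk} = \delta_{jk}$.

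Having done this for the basis vectors $\pi = \pi_j$ and $C = C_k$, the statement follows: $\langle [\Delta(\pi_j)],[\1_{C_k}^\natural]\rangle_\lambda = 1$ precisely when $j = k$, which under the indexing convention is exactly the condition $[\mathcal{P}(\pi_j)] = [\IC(\1_{C_k})]$, and it is $0$ otherwise. Since every $\pi \in \Pi_\lambda(G)$ is some $\pi_j$ and every $H_\lambda$-orbit is some $C_k$, this covers all cases in the lemma. I would also remark that the computation a priori takes place in $K_\CC$, but since the pairing takes integer values on the integral Grothendieck groups, the conclusion holds over $\mathbb{Z}$.

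The only real subtlety — and the step I would flag as the main thing to get right rather than a genuine obstacle — is bookkeeping of signs and transposes: the two occurrences of $(-1)^{\dim C_i}$ (one from the pairing normalization \eqref{eqn:pairing}, one from the expansion of $[\1_{C_j}^\natural]$ into shifted $\IC$ sheaves) must cancel, and Hypothesis~\ref{hypothesis} must be invoked in the form $m_\lambda = {}^t c_\lambda$ with the indices lined up so that $c_\lambda$ and ${}^t m_\lambda$ literally coincide. I would therefore present the argument matrix-theoretically: let $m_\lambda$ be the transition matrix from standards to irreducibles on the spectral side, let $c_\lambda$ be the transition matrix from standard sheaves to (shifted) simple perverse sheaves on the geometric side, observe that the pairing \eqref{eqn:pairing} identifies the irreducible basis $\{[\pi_i]\}$ with the dual basis of $\{[\IC(\1_{C_i})[-\dim C_i]]\}$, conclude that the Gram matrix of $\{[\Delta(\pi_j)]\}$ against $\{[\1_{C_k}^\natural]\}$ is $m_\lambda^{\phantom{t}}({}^t c_\lambda)^{-1} = m_\lambda\, m_\lambda^{-1} = \operatorname{Id}$ by Hypothesis~\ref{hypothesis}, and read off the lemma.
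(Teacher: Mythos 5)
Your proposal is correct and follows essentially the same route as the paper's own proof: expand $[\Delta(\pi_j)]$ in irreducibles via $m_\lambda$, expand $[\1_{C_k}^\natural]$ in shifted simple perverse sheaves via $c_\lambda^{-1}$, let the two factors of $(-1)^{\dim C_i}$ cancel against the pairing normalization, and invoke Hypothesis~\ref{hypothesis} in the form $^t m_\lambda = c_\lambda$ to get the identity matrix. Your bookkeeping of transposes is in fact cleaner than the paper's (whose displayed computation contains some index typos), but there is no substantive difference in method.
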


\begin{proof}
\begin{eqnarray*}
\langle [\Delta(\pi_j)], [\1_{C_i}^\natural] \rangle_\lambda
&=& 
\langle  
\sum_{k\in I} m_{ji} [\pi_l],
\sum_{l\in I } (-1)^{\dim(C_l)}({c)\lambda}^{-1})_{li} [\IC(\1_{C_l})]
\rangle_\lambda\\
&=& 
\sum_{k,l \in I} m_{ji}\ 
(-1)^{\dim(C_l)} ({c_\lambda}^{-1})_{li}
\langle 
[\pi_k],[\IC(\1_{C_l})] \rangle_\lambda\\
&=& 
\sum_{l\in I} m_{lj}\ 
(-1)^{\dim(C_l)} ({c_\lambda}^{-1})_{li}
(-1)^{\dim(C_l)}\\
&=& 
\sum_{l\in I} 
({c_\lambda}^{-1})_{li}\
m_{lj}
\\
&=&
\sum_{l\in I}  (\,^t{c_\lambda}^{-1})_{il}\ m_{lj}\\
&=&
(\,^t{c_\lambda}^{-1}\ m_\lambda)_{ij}.
\end{eqnarray*}
By Hypothesis~\ref{hypothesis}, $\,^t{c_\lambda}^{-1} = m_\lambda^{-1}$, so 
\begin{eqnarray*}
(\,^t{c_\lambda}^{-1}\ m_\lambda)_{ij}
&=&
(m_\lambda^{-1}\ m_\lambda )_{ij}\\
&=& 
\begin{cases}
    1 & i=j\\
    0 &i\ne j.
\end{cases}
\end{eqnarray*}
\end{proof}

\subsection{Alternate form of Vogan's conjecture on A-packets}\label{subsection:eta}

Now let $\psi : W''_F \to \dualgroup{G}$ be an Arthur parameter for $G$.
Let $\lambda \coloneqq \lambda_\psi : W_F\to \dualgroup{G}$ be its infinitesimal parameter, as defined in Section~\ref{subsection:notation}.
Based on \cite{CFMMX}*{Definition 2, \S 8.2}, define $\eta^{\Evs}_{\psi} \in K\Rep_\lambda(G)$ by
\[
\eta^{\Evs}_{\psi} \coloneqq 
(-1)^{d(\psi)}\sum_{\pi\in \Pi_{\lambda}(G)} (-1)^{d(\pi)} \rank \left(\Evs_\psi \mathcal{P}(\pi) \right)\ [\pi],
\]
where $d(\psi) \coloneqq \dim(C_\psi)$ and $d(\pi) = \dim(C_{\phi_\pi})$.
Recall that the classes $[\pi]$, as $\pi$ ranges over  $\Pi_\lambda(G)$, form a basis for $K\Rep^\text{fl}_\lambda(G)$.

\begin{proposition}\label{prop:eta}
For all $\mathcal{F}\in D_{H_\lambda}(V_\lambda)$,
\[
\langle \eta^{\Evs}_{\psi}, [\mathcal{F}] \rangle_\lambda
=
(-1)^{d(\psi)} \rank \left(\Evs_\psi \mathcal{F}\right).
\]
\end{proposition}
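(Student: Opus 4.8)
The plan is to verify the identity by expanding both sides in the dual bases supplied by the local Langlands correspondence and the Kazhdan--Lusztig Hypothesis. First I would observe that both sides of the claimed equality are additive in $\mathcal{F}$ (the left side because $\langle \eta^{\Evs}_\psi, \cdot\rangle_\lambda$ is a linear functional on $K\Rep^\text{fl}_\lambda(G)$, pulled back to $D_{H_\lambda}(V_\lambda)$ via the Grothendieck class map, and the right side because $\Evs_\psi$ is an exact functor to $\Loc_{H_\lambda}(T^*_{C_\psi}(V_\lambda)^\text{reg})$ and $\rank$ is additive on short exact sequences), so it suffices to check it on a set of generators of $K\Der_{H_\lambda}(V_\lambda)$. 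The natural choice is the basis of simple perverse sheaves $[\IC(\1_{C})]$, or equivalently, after the change of basis recorded just before Lemma~\ref{lemma:St}, the basis of standard sheaves $[\1_C^\natural]$; I expect the simple perverse sheaves to be the cleaner choice since $\mathcal{P}(\pi) = \IC(\1_{C_\pi})$ and the definition of $\eta^{\Evs}_\psi$ is written directly in terms of them.

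Next, I would substitute $\mathcal{F} = \IC(\1_{C_j}) = \mathcal{P}(\pi_j)$ for the representation $\pi_j \in \Pi_\lambda(G)$ matched with $C_j$. On the right side this gives immediately $(-1)^{d(\psi)}\rank(\Evs_\psi \mathcal{P}(\pi_j))$. On the left side, I expand $\eta^{\Evs}_\psi$ in the basis $\{[\pi_i]\}$ using its defining formula, and pair term by term against $[\mathcal{P}(\pi_j)]$ using the perfect pairing~\eqref{eqn:pairing}: only the $i=j$ term survives, contributing $(-1)^{d(\psi)}(-1)^{d(\pi_j)}\rank(\Evs_\psi \mathcal{P}(\pi_j)) \cdot (-1)^{d(\pi_j)}$. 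Since $(-1)^{d(\pi_j)}(-1)^{d(\pi_j)} = 1$, the two signs cancel and the left side equals $(-1)^{d(\psi)}\rank(\Evs_\psi\mathcal{P}(\pi_j))$, matching the right side. This handles all of the basis elements, and by additivity the identity holds for arbitrary $\mathcal{F} \in D_{H_\lambda}(V_\lambda)$.

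The only genuine subtlety — and the step I would flag as the main thing to get right rather than a real obstacle — is the passage from $\Per_{H_\lambda}(V_\lambda)$ to the full derived category $D_{H_\lambda}(V_\lambda)$: the statement is asserted for all $\mathcal{F}\in D_{H_\lambda}(V_\lambda)$, not just perverse ones, so I must ensure that both $\langle \eta^{\Evs}_\psi, \cdot\rangle_\lambda$ and $(-1)^{d(\psi)}\rank(\Evs_\psi \cdot)$ factor through the Grothendieck group $K\Der_{H_\lambda}(V_\lambda)$, and that this Grothendieck group is identified with $K\Per_{H_\lambda}(V_\lambda)$ via the perverse cohomology functors $\mathcal{F} \mapsto \sum_k (-1)^k [{}^p\!\mathcal{H}^k(\mathcal{F})]$. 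This identification is standard (the perverse $t$-structure is bounded and its heart has finitely many simple objects here), and $\Evs_\psi$ being exact on the perverse heart extends to a map on Grothendieck groups compatibly; so once this bookkeeping is in place the computation above finishes the proof. I would also remark that one could alternatively run the same argument against the standard-sheaf basis $[\1_C^\natural]$ using Lemma~\ref{lemma:St}, which shows $\langle \eta^{\Evs}_\psi, [\1_C^\natural]\rangle_\lambda$ picks out a single standard-representation coefficient, but this route requires also computing $\rank(\Evs_\psi \1_C^\natural)$ and is less direct.
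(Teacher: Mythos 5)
Your proposal is correct and follows essentially the same route as the paper: reduce to the basis of simple perverse sheaves $\mathcal{P}(\pi_j)$ using the identification $K\Der_{H_\lambda}(V_\lambda)=K\Per_{H_\lambda}(V_\lambda)$ and the fact that $\rank(\Evs_\psi\,\cdot)$ depends only on the Grothendieck class, then expand $\eta^{\Evs}_\psi$ against the pairing \eqref{eqn:pairing} so that the two factors of $(-1)^{d(\pi_j)}$ cancel. The only cosmetic difference is that the paper does not claim exactness of $\Evs_\psi$, only additivity of the Euler characteristic through the Grothendieck group, which is the point you flag anyway.
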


\begin{proof}
It is enough to prove the proposition in the case that $\mathcal{F}$ is a simple object in $\Per_{H_\lambda}(V_\lambda)$.
To see this, note that classes of simple objects in $\Per_{H_\lambda}(V_\lambda)$ form a basis for $K\Per_{H_\lambda}(V_\lambda)$, and since $\Per_{H_\lambda}(V_\lambda)$ is a heart of $\Der_{H_\lambda}(V_\lambda)$, the Grothendieck groups coincide: $K\Per_{H_\lambda}(V_\lambda)=K\Der_{H_\lambda}(V_\lambda)$.
Moreover, the functor $\Evs_\psi \mathcal{F}$ depends only on the class of $\mathcal{F}$ in $K\Der_{H_\lambda}(V_\lambda)$.

So now we assume $\mathcal{F}$ is a simple object in $\Per_{H_\lambda}(V_\lambda)$.
Recall that every simple object in this category takes the form $\mathcal{P}(\pi_i)$ for some $\pi_i\in \Pi_\lambda(G)$.
We now prove the Proposition for $\mathcal{F} = \mathcal{P}(\pi_i)$:
\[
\begin{array}{rlr}
    &\langle \eta^{\Evs}_\psi,[\mathcal{P}(\pi_i)]\rangle_{\lambda} 
        &
            \\
    &=
\langle (-1)^{d(\psi)} \mathop{\sum}\limits_{\pi\in \Pi_{\lambda}(G)} (-1)^{d(\pi)} \rank \left(\Evs_\psi \mathcal{P}(\pi) \right)[\pi_i], [\mathcal{P}(\pi)]\rangle_{\lambda}
        &
            \\
    &= (-1)^{d(\psi)} (-1)^{d(\pi_i)}
\rank \left(\Evs_\psi \mathcal{P}(\pi_i)\right) 
\langle [\pi_i], [\mathcal{P}(\pi_i)]\rangle_\lambda
        &
            \\
    &= (-1)^{d(\psi)} (-1)^{d(\pi_i)}
\rank \left(\Evs_\psi \mathcal{P}(\pi_i)\right) 
(-1)^{d(\pi_i)}
        & \text{by Equation~\ref{eqn:pairing}}
            \\
&= (-1)^{d(\psi)} 
\rank \left(\Evs_\psi \mathcal{P}(\pi_i)\right).
    &
\end{array}
\]
\end{proof}

Armed with these tools, we may now re-cast Vogan's conjecture on A-packets for $G(F)$ in the following form, which we will prove in Theorem~\ref{thm:main}:
\[
\eta^{\Evs}_\psi = \eta_\psi;
\]
or equivalently,
\[
\langle \eta^{\Evs}_\psi, [\mathcal{F}]\rangle_\lambda 
= 
\langle \eta_\psi, [\mathcal{F}]\rangle_\lambda,
\qquad \forall \mathcal{F} \in D_{H_\lambda}(V_\lambda).
\]

\section{Main result on the Levi subgroup}

The results of \cite{CR:irred} adapt to Levi subgroups of $G$, as we now explain.

\begin{proposition}\label{prop:M}
Let $\psi_M$ be an irreducible Arthur parameter for a Levi subgroup $M$ of $G$. Then
\[
\Pi^\ABV_{\psi_M}(M) = \Pi_{\psi_M}(M) = \{ \pi_{\psi_M}\} \qquad\text{and}\qquad
\eta_{\psi_M}^{\Evs} = \eta_{\psi_M} = [\pi_{\psi_M}].
\]
\end{proposition}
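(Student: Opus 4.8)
The plan is to reduce the statement for a Levi subgroup $M \simeq \GL_{m_1}\times\cdots\times\GL_{m_k}$ to the case of a single general linear factor, where the main result of \cite{CR:irred} applies directly, and then to check that all the relevant structures (A-packets, ABV-packets, the functor $\Evs$, the virtual representations $\eta$ and $\eta^{\Evs}$) are compatible with the product decomposition. Since $\psi_M$ is an irreducible Arthur parameter for $M$, it must have the form $\psi_M = \psi_{M,1}\otimes\cdots\otimes\psi_{M,k}$ where each $\psi_{M,i}$ is an irreducible Arthur parameter for $\GL_{m_i}$; this is where irreducibility is used, since a general Arthur parameter for a product would be a direct sum of such tensor products. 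The key point is that for $G = \GL_n$, A-packets are singletons, and for $M$ a product of general linear groups, the A-packet $\Pi_{\psi_M}(M)$ is the singleton consisting of the external tensor product $\pi_{\psi_{M,1}}\boxtimes\cdots\boxtimes\pi_{\psi_{M,k}}$, and likewise $\eta_{\psi_M} = [\pi_{\psi_M}]$.

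First I would record the geometric compatibility: the infinitesimal parameter $\lambda_M$ decomposes as $\lambda_{M,1}\times\cdots\times\lambda_{M,k}$, and correspondingly $V_{\lambda_M} \simeq V_{\lambda_{M,1}}\times\cdots\times V_{\lambda_{M,k}}$ with $H_{\lambda_M} \simeq H_{\lambda_{M,1}}\times\cdots\times H_{\lambda_{M,k}}$ acting factorwise. Under this product decomposition, simple $H_{\lambda_M}$-equivariant perverse sheaves are external tensor products of simple equivariant perverse sheaves on the factors (for $\GL_n$ everything is $\IC(\1_C)$), the orbit $C_{\psi_M}$ is the product of the orbits $C_{\psi_{M,i}}$, conormal bundles and their regular parts multiply, and hence the functor $\Evs_{\psi_M}$ factors as an external tensor product of the functors $\Evs_{\psi_{M,i}}$. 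In particular $\rank(\Evs_{\psi_M}\mathcal{F})$ is multiplicative over external tensor products, and $d(\psi_M) = \sum_i d(\psi_{M,i})$, $d(\pi) = \sum_i d(\pi_i)$ for $\pi = \pi_1\boxtimes\cdots\boxtimes\pi_k$. It then follows from the definition of $\eta^{\Evs}$ that $\eta^{\Evs}_{\psi_M}$ is the external product of the $\eta^{\Evs}_{\psi_{M,i}}$ (with the signs matching because the exponents add), so the whole problem splits into $k$ copies of the single-$\GL$ case.

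Next I would invoke \cite{CR:irred}, which proves exactly that for an irreducible Arthur parameter $\psi_{M,i}$ of $\GL_{m_i}$ one has $\Pi^\ABV_{\phi_{\psi_{M,i}}}(\GL_{m_i}) = \Pi_{\psi_{M,i}}(\GL_{m_i}) = \{\pi_{\psi_{M,i}}\}$, and (using the alternate form from Section~\ref{subsection:eta} together with Proposition~\ref{prop:eta}) that $\eta^{\Evs}_{\psi_{M,i}} = \eta_{\psi_{M,i}} = [\pi_{\psi_{M,i}}]$. Assembling the $k$ factors via the multiplicativity just described yields $\eta^{\Evs}_{\psi_M} = [\pi_{\psi_{M,1}}]\boxtimes\cdots\boxtimes[\pi_{\psi_{M,k}}] = [\pi_{\psi_M}] = \eta_{\psi_M}$, and since the left-hand side is by construction the signed sum over $\Pi^\ABV_{\psi_M}(M)$ while the right-hand side is a single irreducible representation with coefficient $1$, non-degeneracy of the pairing forces $\Pi^\ABV_{\psi_M}(M) = \{\pi_{\psi_M}\} = \Pi_{\psi_M}(M)$.

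The main obstacle I anticipate is not any single deep input but rather the bookkeeping needed to verify that $\Evs$ and the local Langlands correspondence genuinely commute with the product decomposition of $M$ — i.e.\ that the Vogan--Langlands correspondence for a product is the product of the correspondences, and that the microlocal/vanishing-cycles constructions defining $\Evs$ behave well under external products of prehomogeneous spaces. Once that Künneth-type statement is in hand (it should follow formally, since everything in sight is defined factor-by-factor and $\IC$, conormal bundles, nearby/vanishing cycles, and Euler characteristics are all compatible with external tensor products), the proposition is immediate from \cite{CR:irred}. I would therefore spend the bulk of the write-up carefully stating the product compatibility and then citing \cite{CR:irred} for each factor.
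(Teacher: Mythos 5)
Your proposal is correct and follows essentially the same route as the paper: factor $V_{\lambda_M}$, $H_{\lambda_M}$, and $\Per_{H_{\lambda_M}}(V_{\lambda_M})$ along the product decomposition of $M$, check that $\Evs_{\psi_M}$ is multiplicative, and apply \cite{CR:irred} factor by factor, with the coefficient normalization $\rank\left(\Evs_{\psi_M}\mathcal{P}(\pi_{\psi_M})\right)=1$ giving $\eta^{\Evs}_{\psi_M}=[\pi_{\psi_M}]$. The K\"unneth-type compatibility of the vanishing-cycles construction that you flag as the main bookkeeping point is exactly the step the paper settles by citing the Thom--Sebastiani theorem.
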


\begin{proof}
As in Section~\ref{section:eta}, we have
\[
\Pi^\ABV_{\psi_M}(M) 
\coloneqq
\{ \sigma\in \Pi_{\lambda_M}(M) \tq \Evs_{\psi_M} \mathcal{P}(\sigma) \ne 0 \},
\]
where $\lambda_M$ is the infinitesimal parameter of $\psi_M$.
In the definition of $\Pi_{\psi_M}(M)$, we see the simple object $\mathcal{P}(\sigma)$ in $\Per_{H_{\lambda_M}}(V_{\lambda_M})$ determined by $\sigma \in \Rep_{\lambda_M}(M)$ using the local Langlands correspondence. 
Recall $M = \GL_{n_1}\times\cdots\times\GL_{n_k}$; to simplify notation, set $M_i = \GL_{n_i}$ and let $\lambda_i : W_F \to \dualgroup{M_i}$ be the composition of $\lambda_M : W_F \to \dualgroup{M}$ with the projection $\dualgroup{M} \to \dualgroup{M_i}$.
Now factor $V_{\lambda_M}$:
\[
V_{\lambda_M} = V_{\lambda_1}\times\cdots\times V_{\lambda_k} ,
\]
where $V_{\lambda_i}$ is the moduli space of Langlands parameters for $M_i$ with infinitesimal parameter $\lambda_i$.
Likewise, set $H_{\lambda_i} \coloneqq Z_{\dualgroup{M_i}}(\lambda_i)$ so
$
H_{\lambda_M} = H_{\lambda_1}\times\cdots\times H_{\lambda_k}.
$
Then
\[
\Per_{H_{\lambda_M}}(V_{\lambda_M}) \cong \Per_{H_{\lambda_1}}(V_{\lambda_1})\boxtimes\cdots\boxtimes \Per_{H_{\lambda_k}}(V_{\lambda_k})
\]
(finite product of categories).
%
Since $\sigma$ is irreducible, $\sigma = \sigma_1\boxtimes\cdots\boxtimes\sigma_k$ where each $\sigma_i$ is irreducible.
Now the Langlands correspondence for $M$ attaches
\[
\mathcal{P}(\sigma) 
= 
\mathcal{P}(\sigma_1)\boxtimes\cdots\boxtimes\mathcal{P}(\sigma_k)
\]
to $\sigma$.

Finally, recall 
\[
\psi_M = \psi_1\boxtimes\cdots\boxtimes\psi_k,
\]
and write $x_{\psi_M} = (x_{\psi_1}, \ldots , x_{\psi_k}) \in V_{\lambda_M}$ for the corresponding elements in the moduli space; 
likewise write $y_{\psi_M} = (y_{\psi_1}, \ldots , y_{\psi_k}) \in V_{\lambda_M}^*$. 
By the theorem of Thom-Sebastiani \cite{Illusie} \cite{Massey},
\begin{eqnarray*}
\left( \RPsi_{y_{\psi_M}} \mathcal{P}(\sigma)\right)_{x_{\psi_M}}
&=& 
\left( \RPsi_{(y_{\psi_1}, \ldots , y_{\psi_k})} \mathcal{P}(\sigma_1)\boxtimes\cdots\boxtimes \mathcal{P}(\sigma_k)\right)_{(x_{\psi_1}, \ldots , x_{\psi_k})} \\
&=& 
\left( \RPsi_{y_{\psi_1}} \mathcal{P}(\sigma_1)\right)_{x_{\psi_1}}
\boxtimes\cdots\boxtimes\left( \RPsi_{y_{\psi_k}} \mathcal{P}(\sigma_k)\right)_{x_{\psi_k}} .
\end{eqnarray*}
Thus,
\[
\left( \RPsi_{y_{\psi_M}} \mathcal{P}(\sigma)\right)_{x_{\psi_M}}\ne 0 
\qquad\iff\qquad
\left( \RPsi_{y_{\psi_i}} \mathcal{P}(\sigma_i)\right)_{x_{\psi_i}} \ne 0,\ \forall i=1,\ldots ,k.
\]
Equivalently,
\[
\Evs_{\psi_M}\mathcal{P}(\sigma)\ne 0 
\qquad\iff\qquad
\Evs_{\psi_i}\mathcal{P}(\sigma_i)\ne 0,\ \forall i=1,\ldots ,k.
\]
By \cite{CR:irred}, 
\[
\Pi^\ABV_{\psi_i}(M_i) = \Pi_{\psi_i}(M_i) = \{ \pi_{\psi_i} \}, \ \forall i=1, \ldots, k.
\]
It now follows that
\[
\Pi^\ABV_{\psi_M}(M) = \Pi_{\psi_M}(M) = \{ \pi_{\psi_M}\}.
\]

Recall the definition:
\[
\eta^{\Evs}_{\psi_M} \coloneqq 
(-1)^{d(\psi_M)}\sum_{\sigma\in \Pi_{\lambda_M}(G)} (-1)^{d(\sigma)} \rank \left(\Evs_{\psi_M}\mathcal{P}(\sigma)\right)\ [\sigma],
\]
We have just seen that $\Pi^\ABV_{\psi_M}(M) = \{ \pi_{\psi_M}\}$.
Therefore,
\[
\eta^{\Evs}_{\psi_M} =
(-1)^{d(\psi_M)-d(\pi_{\psi_M})} \rank \left(\Evs_{\psi_M}\mathcal{P}(\pi_{\psi_M})\right)\  [\pi_{\psi_M}],
\]
By \cite{CFMMX}*{\S 8.2},
\[
\rank \left(\Evs_{\psi_M}\mathcal{P}(\pi_{\psi_M})\right)
=
1,
\]
so 
\[
\eta^{\Evs}_{\psi_M} = [\pi_{\psi_M}],
\]
as claimed.
\end{proof}

\section{Fixed-point Formula}\label{section:FPF}

The proof of the main result, Theorem~\ref{thm:main}, uses a fixed-point formula, explained in this section.

From Section~\ref{introduction}, recall that $V_{\lambda_M}$ is the subvariety of $V_{\lambda}$ fixed by $\Ad(s)$,
where $s\in \dualgroup{G}$ be a finite-order element such that  $\dualgroup{M} = Z_{\dualgroup{G}}(s)$: $V_{\lambda_M} = V_{\lambda}^s$.
Let 
\begin{equation}\label{eqn:e}
\varepsilon : V_{\lambda_M} \hookrightarrow V_\lambda
\end{equation}
be the obvious inclusion. 
Let $\varepsilon^* : \Der_{H_\lambda}(V_\lambda) \to \Der_{H_{\lambda_M}}(V_{\lambda_M})$ be the equivariant restriction functor of equivariant derived categories. 
We will also use the notation
\[
\mathcal{F}\vert_{V_{\lambda_M}} \coloneqq \varepsilon^*\mathcal{F}.
\]
While $\varepsilon^*$ is an exact functor, it does not take perverse sheaves to perverse sheaves.

\begin{lemma}\label{lemma:FPF}
Let $\psi$ and $\psi_M$ be as above.
For all $\mathcal{F}\in \Der_{H_\lambda}(V_\lambda)$,
\[
(-1)^{d(\psi)}\rank \left(\Evs_{\psi}\mathcal{F}\right) 
=
(-1)^{d(\psi_M)}
\rank \left(\Evs_{\psi_M} \mathcal{F}\vert_{V_{\lambda_M}}\right).
\]
\end{lemma}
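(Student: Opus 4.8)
\textbf{Proof proposal for Lemma~\ref{lemma:FPF}.}

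The plan is to unwind both sides in terms of nearby cycles and reduce the claimed identity to a statement about stalks at the Arthur-type point, where the fixed-point structure of $\Ad(s)$ can be exploited directly. Recall that by definition $\Evs_\psi \mathcal{F}$ is built from $\RPsi_{y_\psi}\mathcal{F}$ at the point $x_\psi$ (more precisely, from the local system on the regular conormal $T^*_{C_\psi}(V_\lambda)^{\mathrm{reg}}$ obtained by applying $\RPsi$ along the linear functional attached to $y_\psi$), and that $\rank$ is the Euler characteristic, which is insensitive to shifts only up to sign. So the first step is to replace $\rank(\Evs_\psi \mathcal{F})$, up to the sign $(-1)^{d(\psi)}$, by $\rank\left(\mathcal{H}^\bullet_{x_\psi}\RPsi_{y_\psi}\mathcal{F}\right)$ — that is, to recognize that the normalizing signs $(-1)^{d(\psi)}$ on the left and $(-1)^{d(\psi_M)}$ on the right are \emph{exactly} the shift discrepancies that make the two sides equal on the nose once everything is phrased in terms of stalks of nearby cycles. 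This is where the precise conventions of \cite{CFMMX}*{Section 7.10} on how $\Evs$ is defined as a shift of $\RPsi$ will get used, and it is the bookkeeping that must be done carefully.

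Next I would invoke the key geometric input: the pair $(x_{\psi_M}, y_{\psi_M}) \in V_{\lambda_M}\times V_{\lambda_M}^*$ maps, under $\varepsilon$ and its dual, to $(x_\psi, y_\psi)$, because $\psi_M$ is simply $\psi$ regarded as landing in $\dualgroup{M} = Z_{\dualgroup{G}}(s)$ and $V_{\lambda_M} = V_\lambda^s$. Since $\varepsilon$ is a closed immersion of a $\Ad(s)$-fixed linear subspace into $V_\lambda$, and $\varepsilon^*$ is the (exact) restriction functor, the compatibility of nearby cycles with pullback along $\varepsilon$ — for the family defined by the linear functional $y_\psi$ restricting to $y_{\psi_M}$ on $V_{\lambda_M}$ — gives
\[
\left(\RPsi_{y_{\psi_M}}\,\varepsilon^*\mathcal{F}\right)_{x_{\psi_M}}
\;=\;
\varepsilon^*\!\left(\RPsi_{y_\psi}\mathcal{F}\right)_{x_{\psi_M}}
\;=\;
\left(\RPsi_{y_\psi}\mathcal{F}\right)_{x_\psi},
\]
the last equality because $x_{\psi_M}$ and $x_\psi$ are literally the same point of $V_\lambda$. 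Taking Euler characteristics of the cohomology of both sides then yields the identity with the shift-normalizations reattached. The commutation of $\RPsi$ with the pullback along the closed embedding $\varepsilon$ is standard (nearby cycles commute with proper pushforward in general, and with pullback along a closed immersion that is a section of the smooth ambient family in the relevant sense — here one uses that $V_\lambda^s$ is cut out linearly, so the restriction of the degenerating function to it is again a degenerating function of the same type); I would cite the relevant statement from \cite{Illusie} or the treatment used in Proposition~\ref{prop:M}, where the Thom--Sebastiani argument already relies on exactly this kind of compatibility.

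The main obstacle I anticipate is not the sheaf-theoretic commutation itself but matching the two Euler-characteristic normalizations: one must check that $d(\psi) - d(\psi_M)$ equals the codimension-type correction that appears when $\RPsi$ is renormalized to $\Evs$ on $V_\lambda$ versus on $V_{\lambda_M}$ — i.e. that the difference of orbit dimensions $\dim C_\psi - \dim C_{\psi_M}$ is precisely the shift by which $\varepsilon^*$ fails to preserve the perverse normalization of the relevant $\IC$ (equivalently, half the codimension of $V_{\lambda_M}$ in $V_\lambda$ versus half the codimension of the conormal slices). A clean way to sidestep sign pitfalls is to first prove the Lemma for $\mathcal{F}$ a simple perverse sheaf $\mathcal{P}(\pi) = \IC(\1_{C})$ — using that $K\Der_{H_\lambda}(V_\lambda) = K\Per_{H_\lambda}(V_\lambda)$ and that both $\rank\circ\Evs$ and $\rank\circ\Evs\circ\varepsilon^*$ factor through the Grothendieck group, exactly as in the proof of Proposition~\ref{prop:eta} — and then extend by linearity. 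For the simple-perverse case the stalk computation above combined with the known dimension of $C_\psi$ versus $C_{\psi_M}$ makes the sign match transparent.
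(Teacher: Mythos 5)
There is a genuine gap, and it sits exactly at the step you call ``standard'': the asserted isomorphism
\[
\left(\RPsi_{y_{\psi_M}}\,\varepsilon^*\mathcal{F}\right)_{x_{\psi_M}}
\;=\;
\varepsilon^*\!\left(\RPsi_{y_\psi}\mathcal{F}\right)_{x_{\psi_M}}
\]
is not true in general. Nearby/vanishing cycles commute with proper pushforward and with \emph{smooth} pullback, but not with pullback along a closed immersion; there is only a base-change morphism, and its failure to be an isomorphism is precisely what the lemma has to control. The fact that $V_{\lambda_M}=V_\lambda^s$ is cut out linearly does not rescue this: the Milnor data of $y_\psi$ on $V_\lambda$ at $x_\psi$ genuinely differs from the Milnor data of its restriction $y_{\psi_M}$ on $V_{\lambda_M}$, and if your commutation held for all $\mathcal{F}\in\Der_{H_\lambda}(V_\lambda)$ the lemma would follow with no input from $s$ at all, for any group and any parameter --- which is too strong. (Also, the Thom--Sebastiani compatibility used in Proposition~\ref{prop:M} is about external products over a product decomposition of the ambient space, not about restriction to a closed subvariety, so it cannot be cited for this step.)

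The idea your proposal is missing is the fixed-point (Lefschetz) mechanism, which is why the statement is called a fixed-point formula. The paper first rewrites $\Evs_\psi\mathcal{F}$, via \cite{CFMMX}*{Proposition 7.8}, in terms of vanishing cycles and then in terms of normal Morse data $(J,K)$ for the nondegenerate covector $y_\psi$ (regularity of $(x_\psi,y_\psi)$ from \cite{CFMMX}*{Proposition 6.1} is what makes Morse theory applicable). The crucial observation is that $s\in Z_{\dualgroup{G}}(\psi)$ acts on these Morse groups, and since $A_\psi=\pi_0(Z_{\dualgroup{G}}(\psi))$ is trivial for $\GL_n$, the rank of $\Evs_\psi\mathcal{F}$ coincides with the Lefschetz number $\sum_i(-1)^i\trace\left(s,H^i(J,K;\mathcal{F})\right)$. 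It is this \emph{trace of $s$}, not the Euler characteristic itself, that localizes to the $s$-fixed set $(J^s,K^s)$, by the Goresky--MacPherson local Lefschetz fixed-point theorem \cite{GM:Lefschetz}, following the argument of \cite{ABV}*{Theorem 25.8}; on the fixed locus one recognizes the Morse data for $y_{\psi_M}$ on $V_{\lambda_M}$, and triviality of the relevant component group again converts the trace back into a rank, producing the sign $(-1)^{d(\psi)}$ versus $(-1)^{d(\psi_M)}$. So the sign bookkeeping you flag as the main obstacle is in fact the easy part; the substantive content is the Lefschetz localization, which cannot be replaced by a base-change identity for $\RPsi$ along $\varepsilon$. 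Your suggested reduction to simple perverse sheaves via the Grothendieck group is fine but does not repair this, since the commutation fails already for simple objects.
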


\begin{proof}
By \cite{CFMMX}*{Proposition 7.8 and Definition 2}, the functor $\Evs_\psi$ is related to vanishing cycles by 
\[
\Evs_\psi \mathcal{F} 
=
(-1)^{d({\hat\psi})-\dim V_\lambda} \left( \RPsi_{y_\psi} [-1] \mathcal{F}\right)_{x_\psi},
\]
where 
\begin{itemize}
\item
$x_\psi$ is the point for $\phi_\psi$ in this moduli space $V_\lambda$; 
\item
$y_\psi$ is the point in the dual moduli space $V_\lambda^*$ matching the Langlands parameter $\phi_{\hat\psi}$ where ${\hat \psi}(w,x,y) \coloneqq \psi(w,y,x)$; and
\item
$\RPsi_{y_{\psi}}$ is Deligne's vanishing cycles functor.
\item
$d({\hat\psi})$ is the dimension of the $H_\lambda$-orbit of $y_\psi$ in $V_\lambda^*$.
\end{itemize}
Next, recall the relation between vanishing cycles and local Morse groups, as for example in \cite{GM:book}*{Part II, Chapter 6, Section 6.A.2}, so
\[
\left( \RPsi_{y_\psi} [-1] \mathcal{F}\right)_{x_\psi}
= 
A^\bullet_{y_\psi}(\mathcal{F}),
\]
where we view $y_\psi \in T^*_{C_\psi,x_\psi}(V_\lambda)$.
Here we use \cite{CFMMX}*{Proposition 6.1} to see that $(x_\psi,y_\psi)\in T^*_{H_\lambda}(V_\lambda)$ is regular, so $y_\psi$ is non-degenerate in the sense of Morse theory.
Combining these observations, it follows that
\[
\mathcal{H}^i \left( \Evs_\psi \mathcal{F}[\dim C_\psi] \right) = H^i(J,K;\mathcal{F}),
\]
where $(J,K)$ is normal Morse data corresponding to $y_\psi$ as a linear functional on $V_\lambda$, as in \cite{GM:book}*{Part II, Chapter 6, Section 6.A.1}.

Now recall that $M$ was chosen from $\psi$ precisely so that its image lies in $\dualgroup{M} = Z_{\dualgroup{G}}(s)$ and, consequently, $s\in Z_{\dualgroup{G}}(\psi)$. 
Recall also that for $G= \GL_n$, the group $Z_{\dualgroup{G}}(\psi)$ is connected, so $A_\psi = \pi_0(Z_{\dualgroup{G}}(\psi))$ is trivial.
This allows us to interpret $\rank \Evs_\psi \mathcal{F}$ as a Lefschetz number:
\[
\rank\left(\Evs_\psi \mathcal{F}\right)
=
\trace\left(s,\Evs_\psi \mathcal{F}\right)
=
(-1)^{d(\psi)} \sum_{i}(-1)^i \trace\left(s,H^i(J,K;\mathcal{F})\right).
\]
Arguing as in the proof of \cite{ABV}*{Theorem 25.8}, which makes essential use of \cite{GM:Lefschetz}, it now follows that
\[
\sum_{i}(-1)^i \trace\left(s,H^i(J,K;\mathcal{F})\right)
=
\sum_{i}(-1)^i \trace\left(s,H^i(J^s,K^s;\mathcal{F})\right);
\]
in other words,
\[
(-1)^{d(\psi)} \trace(s,\Evs_\psi \mathcal{F})
=
(-1)^{d(\psi_M)}\trace(s,\Evs_\psi \varepsilon^*\mathcal{F});
\]
equivalently,
\[
(-1)^{d(\psi)} \rank \Evs_\psi \mathcal{F}
=
(-1)^{d(\psi_M)} \rank \Evs_\psi \varepsilon^*\mathcal{F}.
\]
Here we have also used that, by construction, $\varepsilon(x_{\psi_M}) = x_{\psi}$ and likewise, $y_{\psi_M}$ maps to $y_{\psi}$ under $V_{\lambda_M}^* \hookrightarrow V_\lambda^*$, and by \cite{CFMMX}*{Proposition 6.1}, $(x_\psi,y_\psi)\in T^*_{H_\lambda}(V_\lambda)$ is regular, while the same result shows $(x_{\psi_M},y_{\psi_M})\in T^*_{H_{\lambda_M}}(V_{\lambda_M})$ is regular.
\end{proof}

\begin{proposition}\label{prop:FPF}
Let $M$ be any Levi subgroup of $G$ and let $\psi_M$ be any Arthur parameter for $M$; let $\psi$ be its lift to $G$. 
Let $\lambda_M$ (resp. $\lambda$) be the infinitesimal parameter of $\psi_M$ (resp. $\phi$).
Then
\[
\langle \eta^{\Evs}_{\psi} , [\mathcal{F}] \rangle_{\lambda}
=
\langle \eta^{\Evs}_{\psi_M} , [\mathcal{F}\vert_{V_{\lambda_M}}] \rangle_{\lambda_M},
\]
for every $\mathcal{F}\in \Per_{H_{\lambda}}(V_{\lambda})$.
\end{proposition}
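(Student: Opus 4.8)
The plan is to deduce Proposition~\ref{prop:FPF} essentially formally, by chaining together Proposition~\ref{prop:eta} (applied on both the $G$-side and the $M$-side) with the fixed-point formula of Lemma~\ref{lemma:FPF}. There is no real obstacle left at this point: all the genuine work has already been done upstream in Lemma~\ref{lemma:FPF}, whose proof rests on the stratified-Morse-theory Lefschetz fixed-point formula of Goresky--MacPherson (compare \cite{ABV}*{Theorem 25.8}) together with the regularity of the relevant conormal vectors established in \cite{CFMMX}*{Proposition 6.1}.

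Concretely, I would proceed in three steps. First, apply Proposition~\ref{prop:eta} on the $G$-side to rewrite
\[
\langle \eta^{\Evs}_{\psi}, [\mathcal{F}] \rangle_{\lambda} = (-1)^{d(\psi)}\rank\left(\Evs_\psi \mathcal{F}\right),
\]
which is valid since any $\mathcal{F}\in \Per_{H_\lambda}(V_\lambda)$ lies in $\Der_{H_\lambda}(V_\lambda)$. Second, invoke Lemma~\ref{lemma:FPF}, again valid for arbitrary $\mathcal{F}\in \Der_{H_\lambda}(V_\lambda)$, to obtain
\[
(-1)^{d(\psi)}\rank\left(\Evs_\psi \mathcal{F}\right) = (-1)^{d(\psi_M)}\rank\left(\Evs_{\psi_M}\mathcal{F}\vert_{V_{\lambda_M}}\right).
\]
Third, apply Proposition~\ref{prop:eta} once more, this time for the Levi $M$ and the Arthur parameter $\psi_M$, to recognize the right-hand side as $\langle \eta^{\Evs}_{\psi_M}, [\mathcal{F}\vert_{V_{\lambda_M}}]\rangle_{\lambda_M}$. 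Here I would note explicitly that $\mathcal{F}\vert_{V_{\lambda_M}} = \varepsilon^*\mathcal{F}$ need not be perverse, but this causes no trouble because Proposition~\ref{prop:eta} is stated (and its proof works) for every object of the equivariant derived category, not only for perverse sheaves. Concatenating the three displays yields the asserted equality.

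Two minor points would need a sentence of justification. First, Proposition~\ref{prop:eta} was phrased for $G=\GL_n$, whereas it is being used for $M\cong \GL_{n_1}\times\cdots\times\GL_{n_k}$; I would remark that its proof applies verbatim to such $M$, or alternatively reduce it factor-by-factor using the box-product decompositions of $V_{\lambda_M}$, $H_{\lambda_M}$, and $\Per_{H_{\lambda_M}}(V_{\lambda_M})$ already recorded in the proof of Proposition~\ref{prop:M}. Second, one should check that the dimension signs $d(\psi)=\dim C_\psi$ and $d(\psi_M)=\dim C_{\psi_M}$ entering the definitions of $\eta^{\Evs}_\psi$ and $\eta^{\Evs}_{\psi_M}$ are precisely the ones appearing in Lemma~\ref{lemma:FPF}; this holds by construction, since $\varepsilon(x_{\psi_M})=x_\psi$ and $y_{\psi_M}\mapsto y_\psi$ under $V_{\lambda_M}^*\hookrightarrow V_\lambda^*$. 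Beyond this bookkeeping — making sure that ``$\Evs_{\psi_M}$ applied to $\mathcal{F}\vert_{V_{\lambda_M}}$'' denotes the same object on both sides of Lemma~\ref{lemma:FPF} and of the displayed identity in Proposition~\ref{prop:FPF} — there is nothing further to do.
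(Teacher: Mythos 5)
Your proposal is correct and follows exactly the paper's own argument: the paper proves Proposition~\ref{prop:FPF} by the same three-step chain, applying Proposition~\ref{prop:eta} for $\psi$, then Lemma~\ref{lemma:FPF}, then Proposition~\ref{prop:eta} for $\psi_M$. Your additional remarks (that Proposition~\ref{prop:eta} applies to arbitrary objects of the equivariant derived category and transfers to the Levi $M$) are sound bookkeeping that the paper leaves implicit.
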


\begin{proof}
For all $\mathcal{F}\in D_{H_\lambda}(V_\lambda)$,
\[
\begin{array}{rlr}
    {\langle \eta^{\Evs}_{\psi}, [\mathcal{F}] \rangle_{\lambda}}
    &= (-1)^{d(\psi)}
\rank\left(\Evs_\psi \mathcal{F}\right).
        & \text{by Proposition~\ref{prop:eta}}
            \\
    &= (-1)^{d(\psi_M)} \rank\left(\Evs_\psi \mathcal{F}\vert_{V_{\lambda_M}}\right)
        & \text{by Lemma~\ref{lemma:FPF}}
            \\
    &= \langle \eta^{\Evs}_{\psi_M}, [\mathcal{F}\vert_{V_{\lambda_M}}] \rangle_{\lambda_M}
        & \text{by Proposition~\ref{prop:eta}}.
\end{array}
\]
\end{proof}

\section{Endoscopic Lifting}\label{section:Lift}

Recall that $\varepsilon : V_{\lambda_M}\to V_{\lambda}$ is the natural inclusion of Section~\ref{section:FPF} and that $\varepsilon^* : D_{H_{\lambda}}(V_{\lambda}) \to D_{H_{\lambda_M}}(V_{\lambda_M})$ is the induced equivariant restriction functor. 

With reference to Section~\ref{subsection:Pairing}, let 
\begin{equation}
\Lift_M^G : K_\CC\Rep_{\lambda_M}(M) \to K_\CC\Rep_{\lambda}(G)
\end{equation}
be the linear transformation defined by 
\begin{equation}\label{equation:e_*}
\langle \Lift_M^G[\pi], [\mathcal{F}] \rangle_\lambda = 
\langle [\pi], [\varepsilon^*\mathcal{F}] \rangle_{\lambda_M}.
\end{equation} 
Following the nomenclature of \cite{ABV}*{Definition 26.18} for real groups, we refer to the linear transformation $\Lift_M^G$ as \emph{endoscopic lifting}.
In this section we show that $\Lift_M^G$ coincides with the linear transformation given by $\Ind_P^G$ on the level of Grothendieck groups.
In Section~\ref{sec:LSLift} we see that it also coincides with Langlands-Shelstad transfer.
\[
\begin{tikzcd}
K_\CC \Rep_\lambda(G)\times K_\CC \Per_{H_\lambda}(V_\lambda)\arrow[bend left]{dd}{\varepsilon^* \hskip4pt \text{(geometric restriction)}} \arrow{r}& \CC\\
\\
\arrow[bend left]{uu}{\text{(endoscopic lifting)}\hskip4pt \Lift_M^G} K_\CC \Rep_{\lambda_M}(M)\times K_\CC \Per_{H_{\lambda_M}}(V_{\lambda_M}) \arrow{r}& \CC
\end{tikzcd}
\]

\subsection{Endoscopic lifting of standard representations}\label{ssec:Lift}

\begin{proposition}\label{prop:Lift}
Let $\phi_M$ be any Langlands parameter for $M$ with infinitesimal parameter $\lambda_M$.
Let $\pi_{\phi_M}$ be the corresponding irreducible representation of $M(F)$. 
Then
\begin{equation*}
    \Lift_M^G\left([\Delta(\pi_{\phi_M})]\right)
    =
    [\Delta(\pi_\phi)],
\end{equation*}
where $\phi$ is the Langlands parameter for $G$ obtained by lifting $\phi_M$ via $\dualgroup{M} \hookrightarrow \dualgroup{G}$.
\end{proposition}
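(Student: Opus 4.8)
The plan is to prove the statement by matching two bases of the relevant Grothendieck groups under the pairing $\langle\cdot,\cdot\rangle$. The key observation is that endoscopic lifting $\Lift_M^G$ is \emph{defined} to be the adjoint (transpose) of geometric restriction $\varepsilon^*$ with respect to the perfect pairings on the two sides; so to identify $\Lift_M^G[\Delta(\pi_{\phi_M})]$ with $[\Delta(\pi_\phi)]$, it suffices, by non-degeneracy of $\langle\cdot,\cdot\rangle_\lambda$, to check that both classes pair identically against every simple object $\IC(\1_C)$ of $\Per_{H_\lambda}(V_\lambda)$ — equivalently, against every standard sheaf $[\1_C^\natural]$, since these form a basis of $K\Per_{H_\lambda}(V_\lambda)$. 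I would use the standard-sheaf basis because Lemma~\ref{lemma:St} gives a clean formula: $\langle[\Delta(\pi)],[\1_C^\natural]\rangle_\lambda$ is $1$ if $\mathcal{P}(\pi)=\IC(\1_C)$ and $0$ otherwise. Thus the proposition reduces to a purely geometric statement about how the restriction functor $\varepsilon^*$ interacts with standard sheaves.

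First I would unwind the definition~\eqref{equation:e_*}: for every $H_\lambda$-orbit $C$ in $V_\lambda$,
\[
\langle \Lift_M^G[\Delta(\pi_{\phi_M})],[\1_C^\natural]\rangle_\lambda
=
\langle [\Delta(\pi_{\phi_M})],[\varepsilon^*\1_C^\natural]\rangle_{\lambda_M}.
\]
Now $\varepsilon^*\1_C^\natural$ is the (exact) restriction to $V_{\lambda_M}=V_\lambda^s$ of the sheaf supported on $C$ that equals $\1_C$ there; since $\varepsilon$ is a locally closed embedding and $\1_C^\natural$ is an honest sheaf, $\varepsilon^*\1_C^\natural = \1_{C\cap V_{\lambda_M}}^\natural$ as a sheaf on $V_{\lambda_M}$, i.e. it is the extension-by-zero-from-its-support of the constant sheaf on the (possibly empty, possibly reducible) locally closed subset $C\cap V_{\lambda_M}$. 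Decomposing $C\cap V_{\lambda_M}$ into $H_{\lambda_M}$-orbits $C'_1,\dots,C'_r$ in $V_{\lambda_M}$, one gets in the Grothendieck group $[\varepsilon^*\1_C^\natural]=\sum_j [\1_{C'_j}^\natural]$ (stratifying a sheaf supported on a locally closed set by orbits is exact at the level of $K$-theory). Pairing against $[\Delta(\pi_{\phi_M})]$ and applying Lemma~\ref{lemma:St} on the $M$-side, the result is the number of $j$ with $\mathcal{P}(\pi_{\phi_M})=\IC(\1_{C'_j})$ — which is $1$ if the orbit $C_{\phi_M}$ attached to $\pi_{\phi_M}$ by the Vogan–Langlands correspondence for $M$ is one of the orbits appearing in $C\cap V_{\lambda_M}$, and $0$ otherwise. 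So
\[
\langle \Lift_M^G[\Delta(\pi_{\phi_M})],[\1_C^\natural]\rangle_\lambda
=
\begin{cases} 1 & C_{\phi_M}\subseteq C\cap V_{\lambda_M},\\ 0 & \text{otherwise}.\end{cases}
\]
On the other hand, by Lemma~\ref{lemma:St} on the $G$-side, $\langle[\Delta(\pi_\phi)],[\1_C^\natural]\rangle_\lambda$ is $1$ exactly when $C=C_\phi$, the orbit attached to $\phi$. So the proposition comes down to the geometric compatibility: \emph{the unique $H_\lambda$-orbit $C$ in $V_\lambda$ whose intersection with $V_{\lambda_M}=V_\lambda^s$ contains the orbit $C_{\phi_M}$ is $C=C_\phi$}, where $\phi$ is the lift of $\phi_M$ along $\dualgroup M\hookrightarrow\dualgroup G$. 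Since $\varepsilon$ sends the point $x_{\phi_M}\in V_{\lambda_M}$ for $\phi_M$ to the point $x_\phi\in V_\lambda$ for $\phi$ (the inclusion $\dualgroup M\hookrightarrow\dualgroup G$ on Lie algebras is exactly the map on moduli spaces, as recorded already in Section~\ref{introduction} and used in the proof of Lemma~\ref{lemma:FPF}), the orbit $C_\phi$ does meet $V_{\lambda_M}$ in a set containing $C_{\phi_M}$; what needs an argument is \emph{uniqueness}, i.e. that no larger orbit $C\supsetneq C_\phi$ also has $C_{\phi_M}$ in its intersection with $V_{\lambda_M}$ — but this is immediate because an orbit $C$ contains the point $x_{\phi_M}$ if and only if $x_{\phi_M}\in C$, and $x_{\phi_M}$ lies in exactly one $H_\lambda$-orbit, namely $C_\phi$ (as $\varepsilon(x_{\phi_M})=x_\phi\in C_\phi$). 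Hence for any $C$, $C_{\phi_M}\subseteq C\cap V_{\lambda_M}$ forces $x_\phi\in C$, i.e. $C=C_\phi$; and conversely $C_\phi\cap V_{\lambda_M}\ni x_{\phi_M}$ so this $C$ works. This gives the two sides agree on the full basis $\{[\1_C^\natural]\}$, and non-degeneracy of the pairing finishes the proof.

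The step I expect to be the main obstacle is the careful bookkeeping around $\varepsilon^*\1_C^\natural$: one must be sure that restricting a standard (non-perverse) sheaf along the closed embedding $\varepsilon$ of the fixed-point locus behaves as claimed in the equivariant derived category, in particular that its class in $K\Per_{H_{\lambda_M}}(V_{\lambda_M})$ really is $\sum_j[\1_{C'_j}^\natural]$ with $C'_j$ the $H_{\lambda_M}$-orbits in $C\cap V_{\lambda_M}$, and — more subtly — that the Vogan–Langlands correspondences for $M$ and for $G$ are compatible in the sense that $\pi_{\phi_M}\leftrightarrow C_{\phi_M}$ with $\varepsilon(C_{\phi_M})\subseteq C_\phi$ and $\phi$ the lift of $\phi_M$. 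This last point is where one genuinely uses that the embedding $\dualgroup M\hookrightarrow\dualgroup G$ of dual groups, Levi in $\GL_n$, induces the embedding of moduli spaces $V_{\lambda_M}\hookrightarrow V_\lambda$ compatibly with parametrizations — a fact that is implicit in the setup of Section~\ref{introduction} but should be stated explicitly. Everything else is formal manipulation of the perfect pairing together with Lemma~\ref{lemma:St}.
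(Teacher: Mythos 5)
Your proposal is correct and follows essentially the same route as the paper's proof: reduce to pairing against the standard-sheaf basis $\{[\1_C^\natural]\}$, use $[\varepsilon^*\1_C^\natural]=\sum_D[\1_D^\natural]$ over $H_{\lambda_M}$-orbits $D\subseteq C\cap V_{\lambda_M}$, apply Lemma~\ref{lemma:St} on both the $M$- and $G$-sides, and conclude by non-degeneracy of the pairing. Your extra care in spelling out that $\varepsilon(x_{\phi_M})=x_\phi$ and that $x_{\phi_M}$ lies in a unique $H_\lambda$-orbit only makes explicit a compatibility the paper uses implicitly.
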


\begin{proof}
We prove the proposition by showing
\[
\langle 
\Lift_M^G\left([\Delta(\pi_{\phi_M})]\right), [\mathcal{F}] 
\rangle_{\lambda}
    =
\langle 
[\Delta(\pi_\phi)],
[\mathcal{F}] 
\rangle_{\lambda},
\]
for every $\mathcal{F}\in \Der_{H_\lambda}(V_\lambda)$.
To do this, it is sufficient to take $\mathcal{F} = \1_{C}^\natural$ and allow $C$ to range over $H_\lambda$-orbits in $V_\lambda$, since these sheaves provide a basis for the Grothendieck group.
Observe that
\begin{equation*}
    \varepsilon^*\left(\1_{C}^\sharp\right)
    =
    \1_{C\cap V_{\lambda_M}}^\sharp.
\end{equation*}
Consequently, in $K\Per_{H_\lambda}(V_\lambda)$, 
\begin{equation*}
[\varepsilon^*\left(\1_{C}^\sharp\right)]
    =
    \sum_{D}
    [\1_{D}^\sharp]
\end{equation*}
where the sum is taken over $H_{\lambda_M}$-orbits $D$ in $V_{\lambda_M}$ appearing in $C\cap V_{\lambda_M}$, or in other words, over all orbits $D$ in $V_{\lambda_M}$ whose saturation in $V_{\lambda}$ is $C$.
Now,
\begin{eqnarray*}
\langle 
\Lift_M^G\left([\Delta(\pi_{\phi_M})]\right), [\1_{C}^\natural] 
\rangle_{\lambda}
    &=&
\langle 
[\Delta(\pi_{\phi_M})], \varepsilon^*[\1_{C}^\natural] 
\rangle_{\lambda_M}
\\
&=&
\langle 
[\Delta(\pi_{\phi_M})], \sum_{D} [\1_{D}^\natural]
\rangle_{\lambda_M}
\\
&=& 
\sum_{D} 
\langle 
[\Delta(\pi_{\phi_M})],  [\1_{D}^\natural]
\rangle_{\lambda_M}.
\end{eqnarray*}
Now, by Lemma~\ref{lemma:St} adapted from $G$ to $M$, 
$
\langle 
[\Delta(\pi_{\phi_M})],  [\1_{D}^\natural]
\rangle_{\lambda_M}
$
is non-zero only whee $D$ is the $H_{\lambda_M}$-orbit of $\phi_M\in V_{\lambda_M}$, in which case the pairing gives the value $1$. 
Therefore,
\begin{eqnarray*}
\langle 
\Lift_M^G\left([\Delta(\pi_{\phi_M})]\right), [\1_{C}^\natural] 
\rangle_{\lambda}
&=&
\begin{cases}
1 & \phi\in C\\
0 & \phi\not\in C.
\end{cases}
\end{eqnarray*}
On the other hand, by Lemma~\ref{lemma:St},
\begin{eqnarray*}
\langle 
[\Delta(\pi_\phi)],
[\1_C^\natural] 
\rangle_{\lambda}
&=&
\begin{cases}
1 & \phi\in C\\
0 & \phi\not\in C.
\end{cases}
\end{eqnarray*}
It follows that
\[
\langle 
\Lift_M^G\left([\Delta(\pi_{\phi_M})]\right), [\1_C^\natural] 
\rangle_{\lambda}
    =
\langle 
[\Delta(\pi_\phi)],
[\1_C^\natural] 
\rangle_{\lambda},
\]
for every $H_\lambda$-orbit $C$ in $V_\lambda$, and therefore
\[
\langle 
\Lift_M^G\left([\Delta(\pi_{\phi_M})]\right), [\mathcal{F}] 
\rangle_{\lambda}
    =
\langle 
[\Delta(\pi_\phi)],
[\mathcal{F}] 
\rangle_{\lambda},
\]
for every $\mathcal{F}\in \Der_{H_\lambda}(V_\lambda)$.
Since the pairing is perfect, it follows that
\[
\Lift_M^G\left([\Delta(\pi_{\phi_M})]\right)
=
[\Delta(\pi_\phi)].
\]
\end{proof}

\subsection{Comparison with parabolic induction}\label{subsection:IndLift}

In this subsection, we make precise and prove the claim that parabolic induction of a standard representation is a standard representation. We then show that endoscopic lifting can be characterized by parabolic induction.

We use the theory and notation established in Section \ref{ssec: standard representations} for irreducible and standard representations. Let $\pi$ be an irreducible representation of $G(F)$. The standard representation $\Delta(\pi)$ of $\pi$ can be extracted in terms of multisegments using Zelevinsky theory. This can be seen directly from \cite{kudla1994local}*{Theorem 2.2.2}, but we also clarify this explicitly in the lemma below. 

\begin{lemma}\label{lemma: standard is second induction}
    Let $\pi$ be a smooth irreducible representation of $G=\GL_n(F)$ with multisegment $\alpha = \{\Delta_1, \ldots \Delta_k\}$ arranged so that the segments satisfy the "does not precede" condition. Then \[
    \Delta(\pi)\simeq \Ind_P^G(Q(\Delta_1)\otimes Q(\Delta_2)\otimes \cdots \otimes Q(\Delta_k)),
    \]
    where $P$ is the standard parabolic specified by the $\Delta_i$s.
\end{lemma}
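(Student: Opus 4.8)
The plan is to unravel the definitions on both sides and match them using Zelevinsky/Langlands classification as packaged in \cite{kudla1994local}. First I would recall that the standard representation $\Delta(\pi)$ is, by definition (Section~\ref{ssec: standard representations}, following \cite{Konno}), the representation $\Ind_P^G(\tau \otimes \exp\nu)$ attached to the Langlands data $(P,\nu,\tau)$ of $\pi$, where $\tau$ is tempered and $\nu$ lies in the open positive chamber. For $\GL_n$, the tempered representations of a Levi are (unitarily induced) products of square-integrable representations, and the square-integrable representations are exactly the $Q(\Delta)$ for \emph{balanced} segments $\Delta$ (i.e.\ centred so that the induced twist is unitary), via the Bernstein--Zelevinsky classification. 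So the Langlands data of $\pi$ repackages the multisegment $\alpha = \{\Delta_1,\dots,\Delta_k\}$: grouping the real parts of the segment centres into the chamber wall decomposition recovers $(P,\nu,\tau)$, and conversely $\tau\otimes\exp\nu$ reassembles as $Q(\Delta_1)\otimes\cdots\otimes Q(\Delta_k)$ on the Levi $M$ of the parabolic $P$ specified by the block sizes of the $\Delta_i$. This is precisely the content of \cite{kudla1994local}*{Theorem 2.2.2}, which identifies the Langlands quotient $Q(\alpha)$ with the unique irreducible quotient of $\Ind_P^G(Q(\Delta_1)\otimes\cdots\otimes Q(\Delta_k))$ when the $\Delta_i$ satisfy the "does not precede" condition; I would cite that theorem for the core identification and then only need to check that the inducing representation there is the \emph{standard} representation, i.e.\ matches the $(P,\nu,\tau)$ recipe.

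The key steps, in order: (1) write out the Langlands data $(P,\nu,\tau)$ of $\pi$ and observe that the standard parabolic $P$ is exactly the one specified by the block sizes $\dim \Delta_i$ (possibly after regrouping segments with the same central exponent, which is harmless since unitary induction within a tempered factor doesn't change the module); (2) identify $\tau$ with the unitary induction $\bigotimes_i Q(\Delta_i^{0})$ of the "recentred" segments and $\exp\nu$ with the diagonal character recording the central exponents, so that $\tau\otimes\exp\nu \cong Q(\Delta_1)\otimes\cdots\otimes Q(\Delta_k)$ as a representation of $M(F)$; (3) conclude $\Delta(\pi) = \Ind_P^G(\tau\otimes\exp\nu) \cong \Ind_P^G(Q(\Delta_1)\otimes\cdots\otimes Q(\Delta_k))$; (4) cross-check that the unique irreducible quotient of the right-hand side is $Q(\alpha) = \pi$, which is \cite{kudla1994local}*{Theorem 1.2.5 and Theorem 2.2.2}, confirming consistency with "$\pi$ is the unique irreducible quotient of $\Delta(\pi)$."

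The main obstacle I expect is purely bookkeeping: reconciling the two normalizations. Zelevinsky's segments $Q(\Delta)$ for $\Delta = [\sigma,\sigma(r-1)]$ are \emph{not} a priori tempered — the twist $\exp\nu$ in the Langlands data must be extracted by centring each segment, and one has to check that the "does not precede" ordering on $\alpha$ is exactly what makes the resulting $\nu$ land in the closed positive chamber (strictly positive on the relevant coroots up to the walls corresponding to grouped tempered factors). This is standard but needs care with conventions; since \cite{kudla1994local}*{Theorem 2.2.2} already does this comparison, I would lean on it rather than redo it, keeping the proof short. The only genuinely new remark is that the object \cite{kudla1994local} calls the "standard module" coincides with the $\Delta(\pi)$ of \cite{Konno} used throughout this paper — a one-line observation once the normalizations are aligned.
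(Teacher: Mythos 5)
Your proposal is correct and follows essentially the same route as the paper: both arguments rest on Kudla's exposition (Theorem 2.2.2 and the surrounding pages), recentring each $Q(\Delta_i)$ as a tempered representation twisted by an exponent, grouping equal exponents into irreducible fully-induced tempered factors, matching the resulting positivity condition with the ``does not precede'' ordering, and concluding via the uniqueness of the Langlands data/quotient. The only cosmetic difference is direction --- you unpack $\Delta(\pi)=\Ind_{P'}^G(\tau\otimes\exp\nu)$ into the Zelevinsky-side induced representation, whereas the paper massages $\Ind_P^G(Q(\Delta_1)\otimes\cdots\otimes Q(\Delta_k))$ into the standard form of Konno --- but the ingredients and bookkeeping are identical.
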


\begin{proof}
    We use Kudla's expository work \cite{kudla1994local}, specifically the arguments in pages 372-374. Recall from \ref{ssec: standard representations} that $\pi$ is the unique irreducible quotient of $\Ind_P^G(Q(\Delta_1)\otimes Q(\Delta_2)\otimes \cdots \otimes Q(\Delta_k))$ where $\Delta_i$s are arranged so that they satisfy the "does not precede" condition. Each $Q(\Delta_i)$ is an essentially tempered representation, which means there is a $x_i \in \mathbb{R}$ so that $Q(\Delta_i)\simeq Q(\Delta'_i)(x_i)$ where $Q(\Delta'_i)$ is tempered. Thus we have
    \[
    \begin{array}{rl}
    &\Ind_P^G(Q(\Delta_1)\otimes Q(\Delta_2)\otimes \cdots \otimes Q(\Delta_k))\\ &\simeq \Ind_P^G(Q(\Delta'_1)(x_1)\otimes Q(\Delta'_2)(x_2)\otimes \cdots \otimes Q(\Delta'_k)(x_k)).
    \end{array}
    \]
    Since $Q(\Delta'_i)$s are square-integrable, none of the $\Delta'_i$s can be linked. Moreover, we must have $x_1 \geq x_2 \ldots \geq x_k$ as the $\Delta_i$s satisfy the "does not precede" condition. If $x_i=x_{i+1}$, then we can replace $Q(\Delta'_i)(x_i)\otimes Q(\Delta'_{i+1})(x_{i+1})$ with $Q(\Delta'_i, \Delta'_{i+1})(x_i)$, which is equal to the full induced representation, and an irreducible tempered representation twisted by $x_i$. Thus, we obtain a sequence $x_1 > \cdots > x_{k'}$ and tempered representations $\tau_1=Q(\alpha_1),\ldots,\tau_{k'}=Q(\alpha_{k'})$ where $\alpha_i$s partition the set $\{\Delta'_1,\Delta'_2,\ldots, \Delta'_{k}\}$. This gives us,
    \begin{equation}\label{standard is second induction}
        \Ind_P^G(Q(\Delta_1)\otimes Q(\Delta_2)\otimes \cdots \otimes Q(\Delta_k))) \simeq \Ind_{P'}^G(\tau_1(x_1)\otimes \cdots \otimes \tau_{k'}(x_{k'})).\end{equation}
    Here $P'=M'N'$ is the standard parabolic subgroup specified by the $\alpha_i$s. By observing that $\tau_1\otimes \cdots \otimes \tau_{k'}$ is a tempered representation of $M'$ and $x_1 > \cdots > x_{k'}$ specifies a $P'$-positive unramified character of $M'$, we see that the representations in \eqref{standard is second induction} are standard representations. The result now follows by using the that $\pi$ is the unique irreducible quotient of $\Ind_P^G(Q(\Delta_1)\otimes Q(\Delta_2)\otimes \cdots \otimes Q(\Delta_k))$.
\end{proof}

The upshot of this lemma is that we can talk about standard representations purely in terms of multisegments, which makes it easy to pin down the standard representation obtained after parabolic induction. We prove the implicit claim in this statement below.

\begin{proposition}\label{prop:Ind standard}
Let $M=\GL_{m_1}\times \GL_{m_2} \cdots \times \GL_{m_k}$ be a Levi subgroup of $G$.  Let $P$ denote the standard parabolic of $G$ with Levi component $M$. Then, for any $\pi_M \in \Pi_{\lambda_M}(M)$, $[\Ind_P^{G}\left(\Delta(\pi_M)\right)]$ is the image of a standard representation of $G$ in $K_{\CC}\Rep_{\lambda}(G)$. Moreover, $\Ind_P^{G}\left(\Delta(\pi_M)\right)$ has a unique composition factor $\pi$ so that \[[\Delta(\pi)]=[\Ind_P^{G}\left(\Delta(\pi_M)\right)].\]

\end{proposition}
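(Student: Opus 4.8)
The plan is to reduce the whole statement to Zelevinsky's combinatorics of multisegments, using Lemma~\ref{lemma: standard is second induction} in both directions together with the transitivity and the Grothendieck-group commutativity of parabolic induction for $\GL$. The point is that $\Ind_P^G(\Delta(\pi_M))$ is, up to an isomorphism and then a reordering that is harmless in the Grothendieck group, an induced representation of the shape appearing in Lemma~\ref{lemma: standard is second induction}, hence a standard representation; the Langlands quotient of that standard representation is the $\pi$ in the statement.

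\textbf{Step 1: unwind $\Delta(\pi_M)$.} First I would write $\pi_M = \pi_1\otimes\cdots\otimes\pi_k$ with each $\pi_i$ irreducible for $\GL_{m_i}(F)$, and let $\alpha_i=\{\Delta^{(i)}_1,\ldots,\Delta^{(i)}_{k_i}\}$ be the multisegment of $\pi_i$, ordered to satisfy the ``does not precede'' condition. Since the formation of standard representations is compatible with products, $\Delta(\pi_M)=\Delta(\pi_1)\otimes\cdots\otimes\Delta(\pi_k)$, and Lemma~\ref{lemma: standard is second induction} gives $\Delta(\pi_i)\simeq\Ind_{P_i}^{\GL_{m_i}}(Q(\Delta^{(i)}_1)\otimes\cdots\otimes Q(\Delta^{(i)}_{k_i}))$. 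By transitivity of parabolic induction,
\[
\Ind_P^G(\Delta(\pi_M))\ \simeq\ \Ind_{P_0}^G\Big(\bigotimes_{i=1}^{k}\bigotimes_{j=1}^{k_i}Q(\Delta^{(i)}_j)\Big),
\]
where $P_0$ is the standard parabolic of $G$ whose Levi is the product of the $\GL$-blocks indexed by the segments $\Delta^{(i)}_j$ in the displayed order. Since $\pi_M\in\Rep_{\lambda_M}(M)$, all composition factors here have cuspidal support matching $\lambda$ (recall $L$ is a Levi of $M$, hence of $G$), so everything lives in $\Rep_\lambda(G)$.

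\textbf{Step 2: reorder the pooled multiset.} Next I would pool the segments into a single multiset $\{E_1,\ldots,E_N\}:=\{\Delta^{(i)}_j\}$ and reorder it so that the ``does not precede'' condition holds for the whole multiset. This is always possible: ``$\Delta$ precedes $\Delta'$'' forces $\Delta$ and $\Delta'$ to lie on the same supercuspidal line and forces $\Delta'$ to have strictly larger left endpoint, while segments on distinct lines never precede one another; so ordering the segments on each line by non-increasing left endpoint (and interleaving distinct lines arbitrarily) produces an ordering $E_{w(1)},\ldots,E_{w(N)}$ in which no earlier segment precedes a later one. Because parabolic induction descends to a \emph{commutative} product on the Grothendieck group of representations of $\GL$ (Zelevinsky \cite{Z2}), permuting the tensor factors does not change the class, so in $K_\CC\Rep_\lambda(G)$,
\[
[\Ind_P^G(\Delta(\pi_M))]\ =\ \Big[\Ind_{P_0'}^G\big(Q(E_{w(1)})\otimes\cdots\otimes Q(E_{w(N)})\big)\Big],
\]
with $P_0'$ the standard parabolic matching the new order. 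Now Lemma~\ref{lemma: standard is second induction}, read in the other direction, says exactly that the representation on the right is the standard representation $\Delta(\pi)$ of the Langlands quotient $\pi:=Q(E_{w(1)},\ldots,E_{w(N)})$ attached to this ``does not precede''-ordered multisegment. This gives $[\Ind_P^G(\Delta(\pi_M))]=[\Delta(\pi)]$, the first assertion.

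\textbf{Step 3: the uniqueness clause.} Equal classes in the Grothendieck group means the same composition factors with multiplicities; in particular $\pi$, being the unique irreducible quotient of $\Delta(\pi)$, is a composition factor of $\Ind_P^G(\Delta(\pi_M))$. If $\pi'$ is any composition factor with $[\Delta(\pi')]=[\Ind_P^G(\Delta(\pi_M))]=[\Delta(\pi)]$, then since the classes of standard representations form a $\mathbb{Z}$-basis of $K\Rep^\text{fl}_\lambda(G)$ (Section~\ref{ssec: standard representations}) we conclude $\pi'=\pi$. I expect the only genuinely delicate point to be Step~2 --- verifying that the pooled multiset can always be arranged to satisfy the ``does not precede'' condition and that this rearrangement costs nothing at the level of Grothendieck classes; Steps~1 and~3 are bookkeeping with Lemma~\ref{lemma: standard is second induction}, transitivity of induction, and the basis property of standard representations.
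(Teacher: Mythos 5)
Your proof is correct and takes essentially the same route as the paper's: decompose $\Delta(\pi_M)$ into segment Langlands quotients via Lemma~\ref{lemma: standard is second induction}, apply transitivity of parabolic induction, rearrange the pooled segments at the level of the Grothendieck group using Zelevinsky's commutativity, and recognize the result as a standard representation with a uniquely determined Langlands quotient. The only minor differences are cosmetic: you invoke Lemma~\ref{lemma: standard is second induction} in the reverse direction on the pooled ``does not precede''-ordered multisegment (with an explicit ordering by left endpoints) instead of regrouping equal exponents into tempered blocks as the paper does, and you justify the uniqueness of $\pi$ via the basis property of standard representations, a point the paper leaves implicit.
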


\begin{proof}

A representation $\pi_M \in \Pi_{\lambda_M}(M)$ can be written as $\pi_1\otimes \cdots \otimes \pi_k$, where $\pi_i \in \Pi_{\lambda_i}(\GL_{m_i})$ for $1\leq i \leq k$. Moreover, $\Delta(\pi_M)=\Delta(\pi_1)\otimes \cdots \otimes \Delta(\pi_k)$. It suffices to prove this proposition for the case $k=2$. Each $\pi_i$ has the associated data of $\Delta^i_1, \ldots, \Delta^i_{k_i}$ and $x^i_i>x^i_2>\cdots >x^i_{k_i}$ where $Q(\Delta^i_j)$s are irreducible tempered representations. Then, from Lemma \ref{lemma: standard is second induction}, $\Delta(\pi_i)=\Ind_{P_i}^{\GL_{m_i}}(Q(\Delta^i_1)(x^i_1)\otimes \cdots \otimes Q(\Delta^i_{k_i})(x^i_{k_i}))$, where $P_i$ is specified by the $Q(\Delta^i_j)$s. Thus, we have 
\begin{align*}
 &\Delta(\pi_1)\otimes \Delta(\pi_2) \\
 &= \Ind_{P_1}^{\GL_{m_1}}(Q(\Delta^1_1)(x^1_1)\otimes \cdots \otimes Q(\Delta^1_{k_1})(x^1_{k_1})) \otimes \Ind_{P_2}^{\GL_{m_2}}(Q(\Delta^2_1)(x^2_1)\otimes \cdots \otimes Q(\Delta^2_{k_2})(x^2_{k_2})).
\end{align*}
Let $P$ be the standard parabolic subgroup of $G$ with Levi component $\GL_{m_1}\times \GL_{m_2}$. Applying the exact functor, $\Ind_P^G$ throughout, we get 
\begin{align*}
 &\Ind_P^G(\Delta(\pi_1)\otimes \Delta(\pi_2)) \\
 &\simeq \Ind_{P_{12}}^{G}(Q(\Delta^1_1)(x^1_1)\otimes \cdots \otimes Q(\Delta^1_{k_1})(x^1_{k_1}) \otimes Q(\Delta^2_1)(x^2_1)\otimes \cdots \otimes Q(\Delta^2_{k_2})(x^2_{k_2})).
\end{align*}
Here $P_{12} \subset P$ is the standard parabolic subgroup specified by the $Q(\Delta^i_j)$s and the identification follows from transitivity of induction. We rearrange the $Q(\Delta^i_j)(x^i_j)$s so that the $x^i_j$ are decreasing, and whenever two consecutive $x^i_j$s are equal, we may replace $Q(\Delta^i_j)(x^i_j)\otimes Q(\Delta^{i'}_{j'})(x^i_j)$ by $Q(\Delta^i_j, \Delta^{i'}_{j'})(x^i_j)$, which is the full induced representation and also an irreducible tempered representation twisted by $x^i_j$. This rearrangement does not affect the representative of $\Ind_{P_{12}}^{\GL_n}(Q(\Delta^1_1)(x^1_1)\otimes \cdots \otimes Q(\Delta^1_{k_1})(x^1_{k_1}) \otimes Q(\Delta^2_1)(x^2_1)\otimes \cdots \otimes Q(\Delta^2_{k_2})(x^2_{k_2}))$ in $K_{\CC}\Rep_{\lambda}(G)$; this follows from \cite{Z2}*{Thoerem 1.2}. Thus, we obtain a decreasing sequence $y_1,\ldots,y_{l}$ from the $x^i_j$s and multisets $\alpha_1, \ldots, \alpha_{l}$ which partition the $\Delta^i_j$s. Setting $\tau_i=Q(\alpha_i)$, we may write
\[[\Ind_P^{\GL_m(F)}(\Delta(\pi_1)\otimes \Delta(\pi_2))]=[\Ind_{P'_{12}}^{\GL_n(F)}(\tau_1(y_1)\otimes \cdots \otimes \tau_l(y_l)].\]

Here $P'_{12}=M'N'$ is the standard parabolic subgroup specified by the $\alpha_i$s. Now  $\tau_1\otimes \cdots \otimes \tau_2$ is a tempered representation of $M'$ and $y_1 > \cdots > y_l$ specifies a $P'_{12}$-positive unramified character of $M'$. This shows that the representation in the above equation is a standard representation of $G$. 

We now show a unique choice of $\pi$ so that $[\Delta(\pi)]=[\Ind_P^{G}(\Delta(\pi_1)\otimes \Delta(\pi_2))]$. This is completely determined by the multisegment data of the $\tau_i(y_i)$s as we explain below. For a segment $\Delta=[\rho(b),\rho(e)]$, set the notation $\Delta(x)=[\rho(b+x),\rho(e+x)]$. For a multisegment $\beta=\{\Delta_1,...,\Delta_s\}$, set $\beta(x)=\{\Delta_1(x), \cdots, \Delta_s(x)\}$. With this in mind, write $\alpha=\alpha_1(y_1) \sqcup \alpha_2(y_2)\sqcup \cdots \sqcup \alpha_l(y_l)$ where $\alpha_i$s and $y_i$s were determined above. If we write this disjoint union like a concatenation, \textit{i.e.,} preserve the order of $\alpha_i$s and the segments within them, then this multisegment satisfies the "does not precede" condition due to the procedure carried out above. This $\alpha$ corresponds to a unique irreducible representation $Q(\alpha)$ obtained from Langlands classification via multisegments, which is the unique irreducible quotient of $\Ind_{P'_{12}}^{G}(\tau_1(y_1)\otimes \cdots \otimes \tau_l(y_l))$. Setting $\pi=Q(\alpha)$, $\Delta(\pi)=\Ind_{P'_{12}}^{\GL_n(F)}(\tau_1(y_1)\otimes \cdots \otimes \tau_l(y_l))$. Thus, we have
\[[\Ind_P^{G}(\Delta(\pi_1)\otimes \Delta(\pi_2))]=[\Delta(\pi)]\]
in $K_{\CC}\Rep_{\lambda}(G)$.

\end{proof}
\begin{remark}\label{remark: standard identification multisegments}
    In the proof above, observe that $\alpha$ is given by the disjoint union of the multisegments of $\pi_1$ and $\pi_2$ \textit{after} appropriate rearrangement to satisfy the "does not precede" condition. Thus, it is easy to see how the multisegment data of the $\pi_i$s completely determines the representation $\Delta(\pi)$. This procedure generalizes to $k$ representations: If $\pi_i$ corresponds to $\alpha_i$, set $\alpha=\sqcup_i \alpha_i$ rearranged so that the segments satisfy the "does not precede" condition. Then $\pi=Q(\alpha)$ is the uniquely determined Langlands quotient of $\Ind_P^G(\otimes_{\Delta \in \alpha}Q(\Delta))$ so that 
    \[[\Delta(\pi)]=[\Ind_P^{G}(\Delta(\pi_1)\otimes \cdots \otimes \Delta(\pi_k))]=[\Ind_P^{G}(\Delta(\pi_M)].\]
where $\Delta(\pi_M) = \Delta(\pi_1)\otimes \cdots \otimes \Delta(\pi_k)$. Once again, we are able to rearrange the segments of $\alpha$ because we are working with full induced representations in the Grothendiek group, and can therefore invoke \cite{Z2}*{Theorem 1.2}, which assets that this rearrangement should not change the representative in the Grothendiek group.
\end{remark}
This property of induction coincides with endoscopic lifting. Thus, we see that endoscopic lifting is characterized by parabolic induction in this case. 
 \begin{proposition}\label{prop: induction and langlands functorialty}
     Let $M \simeq \GL_{m_1} \times \GL_{m_2}\times \cdots \times \GL_{m_k}$ be a Levi subgroup of $G$. Let $P=MN$ be the standard parabolic subgroup with Levi component $M$. Then, for any $[\pi] \in K_{\CC}\Rep_{\lambda}(M)$, 
     \[\Lift_M^G([\pi])=[\Ind_P^{G}(\pi)].\]
 \end{proposition}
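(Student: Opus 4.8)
The plan is to reduce everything to the two $\mathbb{Z}$-bases of $K_\CC\Rep_{\lambda_M}(M)$ and $K_\CC\Rep_\lambda(G)$, and to use the fact that the pairing $\langle\cdot,\cdot\rangle$ is perfect, so that it suffices to check the identity on a spanning set and then test it against the basis $\{[\1_C^\natural]\}$ of $K_\CC\Per_{H_\lambda}(V_\lambda)$. Since standard representations $\{[\Delta(\pi_M)] : \pi_M \in \Pi_{\lambda_M}(M)\}$ form a $\mathbb{Z}$-basis of $K\Rep^{\op{fl}}_{\lambda_M}(M)$ (Section~\ref{ssec: standard representations}), and both $\Lift_M^G$ and $[\Ind_P^G(-)]$ are linear, it is enough to prove the identity for $[\pi] = [\Delta(\pi_M)]$. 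That is, I would prove
\[
\Lift_M^G\left([\Delta(\pi_M)]\right) = [\Ind_P^G(\Delta(\pi_M))]
\]
for every $\pi_M\in\Pi_{\lambda_M}(M)$, and then conclude by linearity.

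The first main step is the right-hand side: by Proposition~\ref{prop:Ind standard}, $[\Ind_P^G(\Delta(\pi_M))]$ equals $[\Delta(\pi)]$ in $K_\CC\Rep_\lambda(G)$, where $\pi = Q(\alpha)$ is the Langlands quotient attached to the concatenated multisegment $\alpha=\bigsqcup_i\alpha_i$ (rearranged to satisfy the "does not precede" condition), as made explicit in Remark~\ref{remark: standard identification multisegments}. The second main step is the left-hand side: write $\pi_M = \pi_{\phi_M}$ for its Langlands parameter $\phi_M$ with infinitesimal parameter $\lambda_M$; then by Proposition~\ref{prop:Lift}, $\Lift_M^G([\Delta(\pi_{\phi_M})]) = [\Delta(\pi_\phi)]$, where $\phi$ is the lift of $\phi_M$ to $G$ along $\dualgroup{M}\hookrightarrow\dualgroup{G}$. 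So the proposition reduces to the purely representation-theoretic (Zelevinsky-theoretic) assertion that the irreducible representation $\pi_\phi$ with Langlands parameter $\phi$ coincides with $Q(\alpha)$ — equivalently, that the multisegment of $\pi_\phi$ is the concatenation of the multisegments of the $\pi_i$, rearranged to "does not precede".

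I expect this last compatibility to be the main obstacle, though it is essentially bookkeeping: one must match up the Zelevinsky/Langlands classification via multisegments with the local Langlands correspondence via the direct-sum (block-diagonal) operation on Weil--Deligne representations. Concretely, the Langlands parameter $\phi$ is $\phi_1\oplus\cdots\oplus\phi_k$ where $\phi_i$ is the parameter of $\pi_i$, and under the Zelevinsky dictionary a segment $\Delta$ corresponds to an indecomposable Weil--Deligne summand, so the multiset of segments of $\pi_\phi$ is literally the union of the multisets of segments of the $\pi_i$; the Langlands-quotient normalization then forces exactly the "does not precede" rearrangement, which is the one appearing in Remark~\ref{remark: standard identification multisegments}. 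Having established $\pi_\phi = \pi = Q(\alpha)$, we get $\Lift_M^G([\Delta(\pi_M)]) = [\Delta(\pi_\phi)] = [\Delta(\pi)] = [\Ind_P^G(\Delta(\pi_M))]$, and the general case follows by extending linearly over the standard-representation basis and then over $\CC$. If one prefers to avoid invoking the explicit multisegment dictionary, an alternative is to test both sides directly against $[\1_C^\natural]$ using Lemma~\ref{lemma:St} (adapted to $M$) together with the identity $\varepsilon^*(\1_C^\natural) = \sum_D [\1_D^\natural]$ from the proof of Proposition~\ref{prop:Lift}, plus the geometric fact that parabolic induction corresponds on $V_\lambda$ to saturation of orbits from $V_{\lambda_M}$; this routes around the representation theory at the cost of a slightly more delicate orbit-combinatorics argument, and I would present it as a remark rather than the main line.
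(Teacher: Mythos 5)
Your proposal is correct and follows essentially the same route as the paper: reduce by linearity to the basis of standard representations, compute $[\Ind_P^G(\Delta(\pi_{\phi_M}))]=[\Delta(\pi_\phi)]$ via Proposition~\ref{prop:Ind standard} and Remark~\ref{remark: standard identification multisegments}, compute $\Lift_M^G([\Delta(\pi_{\phi_M})])=[\Delta(\pi_\phi)]$ via Proposition~\ref{prop:Lift}, and match the two through the fact that $\phi=\phi_1\oplus\cdots\oplus\phi_k$ has multisegment $\alpha=\sqcup_i\alpha_i$ up to the ``does not precede'' rearrangement. The only difference is cosmetic: you spell out the Weil--Deligne/multisegment bookkeeping slightly more explicitly than the paper, which simply asserts it.
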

 \begin{proof}
    We show that $\Lift_M^G$ and $\Ind_P^{G}$ have the same image on the basis consisting of standard representations of $K_{\CC}\Rep_{\lambda}(M)$. Let $\phi$ be a langlands parameter for $G$ with infinitesimal parameter $\lambda$, both of which factor through $M$. We denote the Langlands and infinitesimal parameters for $M$ using $\phi_M$ and $\lambda_M$, respectively. Now $\phi$ determines a multisegment $\alpha$ whose segments are arranged in an order so that it satisfies the "does not precede" condition, which determines a smooth irreducible representation, $\pi_{\phi}$, of $G$. The parameter $\phi_M$ for $M$ determines multisegments $\alpha_i$ for $1\leq i \leq k$, which determine smooth irreducible representations $\pi_{\phi_i}$ of $\GL_{m_i}$ so that 
     \[\pi_{\phi_M} = \pi_{\phi_1}\otimes \cdots \otimes \pi_{\phi_k},\]
     is interpreted as an external tensor product and thus a representation of $M$. Since $\phi$ factors through $M$ via $\phi_M$, $\alpha = \sqcup_{i=1}^k\alpha_i$, upto rearrangement of segments. Using Remark \ref{remark: standard identification multisegments}, we have that 
     \[[\Ind_P^{G}(\Delta(\pi_{\phi_M}))]=[\Delta(\pi_{\phi})].\]
     However, from Lemma \ref{prop:Lift}, we have that 
     \[\Lift_M^G([\Delta(\pi_{\phi_M})])=[\Delta(\pi_{\phi})].\]
     Since the two maps agree on the basis of standard representations, they are the same. 
 \end{proof}
Finally, we show that endoscopic lifting identifies the $A$-packet of the Levi with the $A$-packet of $G$. 
 \begin{proposition}\label{prop: Lift on irreducibles of A type}
 For every $\mathcal{F}\in D_{H_{\lambda}}(V_{\lambda})$,
\[
\langle \eta_{\psi_M},[\mathcal{F}\vert_{V_{\lambda_M}}]\rangle_{\lambda_M} 
=
\langle \eta_{\psi},[\mathcal{F}]\rangle_{\lambda}; 
\]
equivalently,
\begin{equation*}
    \Lift_M^G[\pi_{\psi_M}]
    =
    [\pi_\psi].
\end{equation*}
\end{proposition}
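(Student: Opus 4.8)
The plan is to reduce the proposition to the single identity $\Lift_M^G[\pi_{\psi_M}] = [\pi_\psi]$ in $K_\CC\Rep_\lambda(G)$, and then to prove that identity by combining Proposition~\ref{prop: induction and langlands functorialty} with the known description of the A-packet of $\GL_n$.

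First I would note that the two displayed statements are equivalent. By Proposition~\ref{prop:M} we have $\eta_{\psi_M} = [\pi_{\psi_M}]$, and since A-packets for $\GL_n$ are singletons, $\eta_\psi = [\pi_\psi]$. Unwinding the defining property \eqref{equation:e_*} of $\Lift_M^G$ gives
\[
\langle \eta_{\psi_M},[\mathcal{F}\vert_{V_{\lambda_M}}]\rangle_{\lambda_M}
=
\langle [\pi_{\psi_M}],[\varepsilon^*\mathcal{F}]\rangle_{\lambda_M}
=
\langle \Lift_M^G[\pi_{\psi_M}],[\mathcal{F}]\rangle_{\lambda},
\qquad \forall\,\mathcal{F}\in D_{H_\lambda}(V_\lambda),
\]
so, since the pairing $\langle\cdot,\cdot\rangle_\lambda$ is perfect, the asserted equality of pairings is equivalent to $\Lift_M^G[\pi_{\psi_M}] = [\pi_\psi]$. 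I would then apply Proposition~\ref{prop: induction and langlands functorialty} to rewrite $\Lift_M^G[\pi_{\psi_M}] = [\Ind_P^G(\pi_{\psi_M})]$, where $P$ is the standard parabolic with Levi $M$. Writing $\psi_M = \psi_1\boxtimes\cdots\boxtimes\psi_k$ with each $\psi_i$ irreducible, we have $\pi_{\psi_M} = \pi_{\psi_1}\otimes\cdots\otimes\pi_{\psi_k}$, so the proposition comes down to the identity $[\Ind_P^G(\pi_{\psi_1}\otimes\cdots\otimes\pi_{\psi_k})] = [\pi_\psi]$ in $K_\CC\Rep_\lambda(G)$.

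The main obstacle is exactly this last identification, which is the one point where the argument leaves the machinery developed in this paper and appeals to the representation theory of $\GL_n$. Each $\pi_{\psi_i}$ is the Speh representation attached to the irreducible Arthur parameter $\psi_i$, as identified in \cite{CR:irred}; in particular it is irreducible and unitary, so by Bernstein's theorem that a parabolically induced product of irreducible unitary representations of $p$-adic general linear groups is again irreducible, $\Ind_P^G(\pi_{\psi_1}\otimes\cdots\otimes\pi_{\psi_k})$ is irreducible. By the description of A-packets for $\GL_n$ (Moeglin and Waldspurger; compatible with Arthur's construction in \cite{Arthur:book}), the unique member of $\Pi_\psi(\GL_n)$ is exactly this induced representation --- equivalently, $\pi_\psi$ coincides with the Langlands quotient $\pi_{\phi_\psi}$ of the parameter $\phi_\psi$ obtained by lifting $\phi_{\psi_M}$ along $\dualgroup{M}\hookrightarrow\dualgroup{G}$. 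Granting these standard facts, $[\Ind_P^G(\pi_{\psi_M})] = [\pi_\psi]$, hence $\Lift_M^G[\pi_{\psi_M}] = [\pi_\psi]$, and the proposition follows. The only remaining care needed is to match the $\boxtimes$-decomposition of $\psi_M$ and $\pi_{\psi_M}$ with the block structure of $P$, which is routine given the set-up of Section~\ref{introduction} and Remark~\ref{remark: standard identification multisegments}.
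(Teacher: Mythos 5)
Your proposal is correct and follows essentially the same route as the paper: reduce to $\Lift_M^G[\pi_{\psi_M}]=[\pi_\psi]$ via Proposition~\ref{prop:M}, the defining property \eqref{equation:e_*} and Proposition~\ref{prop: induction and langlands functorialty}, then use irreducibility of $\Ind_P^G(\pi_{\psi_M})$ (the paper cites \cite{Atobe}, you cite Bernstein's unitary-induction theorem) together with the matching of multisegments/A-packet description to identify it with $\pi_\psi$. The only cosmetic difference is that the paper runs the chain of pairings directly rather than first invoking perfectness of the pairing, which changes nothing of substance.
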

\begin{proof}
    Let $P$ be the standard parabolic subgroup of $G$ with Levi component $M$. The representation $\pi_{\psi_M}$ is a product of unitary Speh representations. We know that $\Ind_P^G(\pi_{\psi_M})$ is an irreducible representation of $G$, see \cite{Atobe}*{Section 2.4}, for example. By matching mutisegments of $\phi_{\psi_M}$ and $\phi_{\psi}$, we have \begin{equation}\label{eq: induction is irreducible on A-par}
     [\Ind_P^G(\pi_{\psi_M})]=[\pi_{\psi}]. 
    \end{equation}
Recall that $\eta_{\psi}=[\pi_{\psi}]$, as the A-packet for $\psi$ is a singleton. We know from \ref{prop:M} that $\Pi_{\psi_M}(M)=\{\pi_{\psi_M}\}$, which gave us $\eta_{\psi_M}=[\pi_{\psi_M}]$. Now we have
\[
    \begin{array}{rlr}
\langle \eta_{\psi_M}, [\mathcal{F}\vert_{V_{\lambda_M}}] \rangle_{\lambda_M}
    &= \langle [\pi_{\psi_M}], \varepsilon^*[\mathcal{F}] \rangle_{\lambda_M}
        & \text{\text{$\Pi_{\psi_M}(M) = \{ \pi_{\psi_M} \}$}},
            \\
    &= \langle \Lift_M^G[\pi_{\psi_M}], [\mathcal{F}] \rangle_{\lambda_M}
        & \text{Definition \ref{equation:e_*}},
            \\        
    &= \langle [\Ind_P^G \left(\pi_{\psi_M}\right)], [\mathcal{F}] \rangle_{\lambda_M}
        & \text{Proposition~\ref{prop: induction and langlands functorialty}},
            \\  
    &= \langle [\pi_{\psi}], [\mathcal{F}] \rangle_{\lambda}
        & \text{by \eqref{eq: induction is irreducible on A-par}},
            \\     
    &= \langle \eta_{\psi}, [\mathcal{F}] \rangle_{\lambda}
        & \text{$\Pi_{\psi}(G) = \{ \pi_\psi \}$}.        
\end{array}
\]
\end{proof}

\section{Main result}

\begin{theorem}[Vogan's conjecture for A-packets for $p$-adic general linear groups]\label{thm:main}
For every $p$-adic field $F$, every positive integer $n$ and every Arthur parameter $\psi$ for $\GL_n(F)$, the A-packet for $\psi$ coincides with the ABV-packet for the Langlands parameter $\phi_\psi$, and the virtual representation attached to this packet agrees with Arthur's:
\[
\Pi^\ABV_{\phi_\psi}(G) = \Pi_\psi(G) = \{ \pi_\psi\},
\qquad\text{and}\qquad
\eta^{\Evs}_\psi = \eta_\psi = [\pi_\psi].
\]
\end{theorem}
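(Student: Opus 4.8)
The plan is to prove the virtual-representation identity $\eta^{\Evs}_\psi = \eta_\psi$ first, and then to read the packet equality off from it. For the first part, recall from Section~\ref{subsection:Pairing} that the pairing
\[
\langle \cdot, \cdot \rangle_\lambda : K\Rep^\text{fl}_\lambda(G) \times K\Per_{H_\lambda}(V_\lambda) \to \mathbb{Z}
\]
is perfect, so it suffices to show $\langle \eta^{\Evs}_\psi, [\mathcal{F}] \rangle_\lambda = \langle \eta_\psi, [\mathcal{F}] \rangle_\lambda$ for every $\mathcal{F} \in D_{H_\lambda}(V_\lambda)$; since both sides factor through $K\Per_{H_\lambda}(V_\lambda)$, which is the Grothendieck group of $D_{H_\lambda}(V_\lambda)$, it is enough to take $\mathcal{F}$ a simple object of $\Per_{H_\lambda}(V_\lambda)$. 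Decompose the given Arthur parameter as $\psi = \psi_1 \oplus \cdots \oplus \psi_k$ with each $\psi_i$ irreducible, form the Levi $M \simeq \GL_{m_1} \times \cdots \times \GL_{m_k}$ ($m_i \coloneqq \dim \psi_i$), pick a finite-order $s \in \dualgroup{G}$ with $Z_{\dualgroup{G}}(s) = \dualgroup{M}$, and let $\psi_M$ be the induced parameter for $M$ — irreducible as an Arthur parameter for $M$ — with infinitesimal parameter $\lambda_M$ and inclusion $\varepsilon \colon V_{\lambda_M} = V_\lambda^s \hookrightarrow V_\lambda$ of Section~\ref{section:FPF}. Then I would chain the three propositions already in hand:
\[
\langle \eta^{\Evs}_\psi, [\mathcal{F}] \rangle_\lambda
\;\overset{(1)}{=}\;
\langle \eta^{\Evs}_{\psi_M}, [\mathcal{F}\vert_{V_{\lambda_M}}] \rangle_{\lambda_M}
\;\overset{(2)}{=}\;
\langle \eta_{\psi_M}, [\mathcal{F}\vert_{V_{\lambda_M}}] \rangle_{\lambda_M}
\;\overset{(3)}{=}\;
\langle \eta_\psi, [\mathcal{F}] \rangle_\lambda ,
\]
where $(1)$ is the fixed-point formula of Proposition~\ref{prop:FPF} (resting on Lemma~\ref{lemma:FPF}), $(2)$ is Proposition~\ref{prop:M}, i.e.\ Vogan's conjecture for the irreducible parameter $\psi_M$ on $M$ in the form $\eta^{\Evs}_{\psi_M} = \eta_{\psi_M}$, and $(3)$ is the endoscopic-lifting identity of Proposition~\ref{prop: Lift on irreducibles of A type}. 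Perfectness of $\langle \cdot, \cdot \rangle_\lambda$ then gives $\eta^{\Evs}_\psi = \eta_\psi$, and since $A$-packets for $\GL_n$ are singletons, $\eta_\psi = [\pi_\psi]$, so $\eta^{\Evs}_\psi = [\pi_\psi]$.

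For the second part, I would expand $\eta^{\Evs}_\psi$ in the basis $\{[\pi] : \pi \in \Pi_\lambda(G)\}$ of $K\Rep^\text{fl}_\lambda(G)$ using its definition from Section~\ref{subsection:eta}:
\[
\eta^{\Evs}_\psi = (-1)^{d(\psi)} \sum_{\pi \in \Pi_\lambda(G)} (-1)^{d(\pi)} \rank\left(\Evs_\psi \mathcal{P}(\pi)\right) [\pi] = [\pi_\psi] ,
\]
so that $\rank(\Evs_\psi \mathcal{P}(\pi)) = 0$ for every $\pi \ne \pi_\psi$ and $\rank(\Evs_\psi \mathcal{P}(\pi_\psi)) = \pm 1$. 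Since for $G = \GL_n$ the component group $A_\psi$ is trivial and the $\IC$-sheaves on $V_\lambda$ enjoy parity vanishing of their stalks, $\Evs_\psi \mathcal{P}(\pi)$ is concentrated in cohomological degrees of a single parity, so its Euler characteristic vanishes if and only if $\Evs_\psi \mathcal{P}(\pi) = 0$. Hence $\Evs_\psi \mathcal{P}(\pi) = 0$ for $\pi \ne \pi_\psi$ while $\Evs_\psi \mathcal{P}(\pi_\psi) \ne 0$, i.e.\ $\Pi^\ABV_{\phi_\psi}(G) = \{\pi_\psi\} = \Pi_\psi(G)$, which together with $\eta^{\Evs}_\psi = \eta_\psi = [\pi_\psi]$ is the theorem.

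The entire substance is packed into the three propositions; the assembly above is formal. I expect the main obstacle to be the fixed-point formula $(1)$: it requires identifying $\Evs_\psi \mathcal{F}$ with a normal Morse datum for $y_\psi$, using regularity of $(x_\psi, y_\psi) \in T^*_{H_\lambda}(V_\lambda)$, and then running a Lefschetz fixed-point argument for the finite-order element $s$ in the style of \cite{ABV}*{Theorem 25.8} and \cite{GM:Lefschetz}, comparing the alternating trace of $s$ on $H^\bullet(J,K;\mathcal{F})$ with that on $H^\bullet(J^s,K^s;\mathcal{F})$. The lifting step $(3)$ is hard for a different reason: it needs the Zelevinsky-theoretic bookkeeping of Propositions~\ref{prop:Ind standard} and~\ref{prop: induction and langlands functorialty}, namely that parabolic induction carries standard representations to standard representations and, crucially, that $\Ind_P^G(\pi_{\psi_M})$ is the irreducible $\pi_\psi$ — using irreducibility of induced products of Speh representations for $\GL_n$. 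These are also exactly the places where the argument is special to $\GL_n$: endoscopic lifting happens to be parabolic induction, $A_\psi$ is trivial, and parity vanishing lets Euler characteristics detect non-vanishing of $\Evs_\psi \mathcal{P}(\pi)$; for quasisplit classical groups each of these would have to be replaced by its Langlands--Shelstad/Kottwitz--Shelstad counterpart.
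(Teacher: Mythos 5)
Your proposal is correct and follows essentially the same route as the paper's proof: the same ``endoscopy square'' chaining Proposition~\ref{prop:FPF}, Proposition~\ref{prop:M}, and Proposition~\ref{prop: Lift on irreducibles of A type}, then invoking non-degeneracy of $\langle\cdot,\cdot\rangle_\lambda$ to conclude $\eta^{\Evs}_\psi=\eta_\psi=[\pi_\psi]$. Your closing parity-vanishing remark, explaining why vanishing of the Euler characteristics $\rank(\Evs_\psi\mathcal{P}(\pi))$ forces $\Evs_\psi\mathcal{P}(\pi)=0$ and hence the packet equality $\Pi^\ABV_{\phi_\psi}(G)=\Pi_\psi(G)$, is a point the paper passes over tersely, and is a welcome extra precision rather than a deviation.
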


\begin{proof}
The proof is obtained by the following diagram, which we call the "endoscopy square", in which $\mathcal{F}\in \Der_{H_\lambda}(V_\lambda)$ is arbitrary.
\[
\begin{tikzcd}
    {\langle \eta^{\Evs}_{\psi}, [\mathcal{F}] \rangle_{\lambda}}
    \arrow[equal]{d}[swap]{\text{Fixed-point formula \hskip3pt}}
    \arrow[dashed,equal]{rrrr}{\text{Vogan's conjecture for $\psi$}}
    &&&& 
    {\langle \eta_{\psi}, [\mathcal{F}] \rangle_{\lambda}}
    \\
    \langle \eta^{\Evs}_{\psi_M}, [\mathcal{F}\vert_{V_{\lambda_M}}] \rangle_{\lambda_M}
    \arrow[equal]{rrrr}[swap]{\text{Vogan's conjecture for $\psi_M$}}
    &&&& 
    \langle \eta_{\psi_M}, [\mathcal{F}\vert_{V_{\lambda_M}}] \rangle_{\lambda_M} 
    \arrow[equal]{u}[swap]{\text{\hskip3pt Endoscopic lifting}}
\end{tikzcd}
\]
We establish the equality across the top by verifying the equality on the other three sides.
The left-hand side of the endoscopy square is a consequence of Proposition~\ref{prop:FPF}.
The equality on the bottom of the endoscopy square is direct consequence of Proposition~\ref{prop:M}; we remark that this result makes use of the main result from \cite{CR:irred}.
The right-hand side of the endoscopy square is Proposition~\ref{prop: Lift on irreducibles of A type}.
We may now conclude
\[
\langle \eta^{\Evs}_{\psi},[\mathcal{F}]\rangle_\lambda 
=
\langle \eta_{\psi},[\mathcal{F}]\rangle_\lambda ,
\]
for every $\mathcal{F}\in D_{H_\lambda}(V_\lambda)$. 
Since the pairing above is non-degenerate, if follows that
\[
\eta^{\Evs}_{\psi} = \eta_{\psi}.
\]
Since $\eta_\psi = [\pi_\psi]$, it follows that
\[
\Pi^\ABV_\psi(G) = \{ \pi_\psi \} = \Pi_\psi(G).
\]
This concludes the proof of Vogan's conjecture for A-packets of general linear groups.
\end{proof}

\section{Langlands-Shelstad transfer and endoscopic lifting}\label{sec:LSLift}

In this section we show that the endoscopic lifting $\varepsilon^*$ from Section{} coincides with Langlands-Shelstad transfer from the Levi subgroup $M$ to the general linear group $G$. 
This result does not play a role in the proof of the main result, Theorem~\ref{thm:main}, so it is offered here as a remark.

Recall that Langlands-Shelstad transfer is defined first on Schwartz functions. Since we are considering the case $G=\GL_n$ and $M = \GL_{m_1}\times\cdots\times\GL_{m_k}$ for $n=m_1 + \cdots + m_k$, there is no need to mention stability in this context. 
The geometric transfer coefficients $\Delta(\gamma,\delta)$ are very simple in this case: functions $f\in C^\infty_c(G(F))$ and $f^M\in C^\infty_c(M(F))$ are said to match if 
\[
\mathcal{O}^M_\gamma(f^M) 
= 
 \Delta(\gamma,\delta)\ \mathcal{O}^G_\delta(f), 
\]
for regular semisimple $\gamma\in M(F)$ and $\delta\in G(F)$, where $\Delta(\gamma,\delta)=0$ unless $\gamma$ and $\delta$ have the same characteristic polynomials, in which case
\[
\Delta(\gamma,\delta) = \lvert {\det}_{\mathfrak{g}/\mathfrak{m}}\left(1-\Ad(\gamma) \right)\lvert_F.
\]
See, for example, \cite{Clozel}*{\S 2}. The factor $\Delta(\gamma, \delta)$ agrees with the Langlands-Shelstad transfer factor for this case, which equals $\Delta_{IV}$, as defined in \cite{langlands-shelstad}*{Section 3}.
In fact, in the case at hand, Langlands-Shelstad transfer is given by the linear transformation
\begin{align*}
  C^\infty_c(G(F)) &\to  C^\infty_c(M(F))\\
  f &\mapsto f^M
\end{align*}
defined by
\[
f^M(m) = \delta_{P}^{1/2}(m) \int_{K} \int_{N(F)} f(kmuk^{-1})\, du\, dk
\]
where $K$ is the maximal compact subgroup of $G$, $N$ is the unipotent radical of the standard parabolic $P$ with Levi component $M$ and $\delta_P$ is the modulus quasicharacter for $P(F)$.

Recall also that distributions $D$ on $G(F)$ and $D^M$ on $M(F)$ are related by Langlands-Shelstad transfer if
\[
D^M(f^M) = D(f),
\]
for all $f\in C^\infty_c(G(F))$.

We recall the distribution character $\Theta_\pi$ attached to an admissible representation $(\pi,V)$ of $G$. For any $f \in C_c^{\infty}(G)$, the linear operator 
\[\pi(f)v=\int_G f(g)\pi(g)vdg\]is of finite rank, by admissibility of $\pi$. Therefore, it has a well-defined trace \[\Theta_{\pi}(f) := \op{tr}\pi(f).\]
Furthermore, Harish-Chandra's work gives us a locally integrable function $\theta_{\pi}$ on $G$ so that $\Theta_{\pi}$ is written in terms of $\theta_{\pi}$. 
\begin{equation}\label{big theta small theta}
\Theta_{\pi}(f)=\int_{G}f(g)\theta_{\pi}(g) dg.
\end{equation}
We note here that the above is true for reductive $p$-adic groups, and not just general linear groups. We also note that $\Theta_{\pi_1}=\Theta_{\pi_2}$ if $[\pi_1]=[\pi_2]$ in $K_{\CC}\Rep_{\lambda}(G)$.

\begin{lemma}\label{lemma: LS matches standards with standards}
Let $\pi_M$ be an irreducible admissible representation of $M(F)$. 
Let $\phi_M$ be its Langlands parameter of $\pi_M$; let $\phi$ be the lift of $\phi_M$ to $G$ and let $\pi$ be the irreducible admissible representation of $G(F)$ matching $\pi$ under the Langlands correspondence. 
Recall that $\Delta(\pi)$ (resp. $\Delta(\pi_M)$) denotes the standard representation for $\pi$ (resp. $\pi_M$).
Then Langlands-Shelstad transfer matches standard representations with standard representations: 
\begin{equation*}
    \Theta_{\Delta(\pi_M)}(f^M) 
    = 
    \Theta_{\Delta(\pi)}(f),
\end{equation*}
for all $f\in C^\infty_c(G(F))$.
\end{lemma}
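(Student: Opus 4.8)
The plan is to reduce the statement about standard representations to the known behavior of parabolic induction under Langlands--Shelstad transfer, using the formula for $f^M$ recalled just above. First I would observe that, by Lemma~\ref{lemma: standard is second induction} (applied to $M = \GL_{m_1}\times\cdots\times\GL_{m_k}$ factor by factor, together with transitivity of induction as in the proof of Proposition~\ref{prop:Ind standard}), the standard representation $\Delta(\pi_M)$ is itself a parabolically induced representation $\Ind_{Q}^{M}(\tau)$ for a suitable standard parabolic $Q$ of $M$ and tempered-times-positive-unramified $\tau$; consequently $\Ind_P^G(\Delta(\pi_M))$ is a standard representation of $G$, and by Proposition~\ref{prop:Ind standard} (or Remark~\ref{remark: standard identification multisegments}) it has a unique composition factor $\pi'$ with $[\Delta(\pi')] = [\Ind_P^G(\Delta(\pi_M))]$. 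The next point is to identify $\pi'$ with $\pi$: since $\phi$ is the lift of $\phi_M$ through $\dualgroup{M}\hookrightarrow\dualgroup{G}$, the multisegment of $\pi$ is the concatenation of the multisegments of the factors of $\pi_M$ rearranged to satisfy the ``does not precede'' condition, which is exactly the multisegment produced in Remark~\ref{remark: standard identification multisegments}; hence $[\Delta(\pi)] = [\Ind_P^G(\Delta(\pi_M))]$ in $K_\CC\Rep_\lambda(G)$, and therefore $\Theta_{\Delta(\pi)} = \Theta_{\Ind_P^G(\Delta(\pi_M))}$ by the remark following \eqref{big theta small theta}.

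It then remains to prove the purely harmonic-analytic identity
\[
\Theta_{\Ind_P^G(\sigma)}(f) = \Theta_\sigma(f^M),\qquad \forall f\in C^\infty_c(G(F)),
\]
for any admissible $\sigma$ of $M(F)$, where $f^M$ is defined by the constant-term-type formula above. This is the classical van Dijk / Clozel descent formula for the character of an induced representation (see \cite{Clozel}*{\S 2}): by definition of normalized induction, $\Ind_P^G(\sigma)(f)$ is an integral operator on $\Ind_P^G(\sigma)$ whose trace, after unfolding over $G(F) = K\cdot P(F)$ and using the decomposition $P = MN$, collapses to $\int_K\int_{N(F)} \delta_P^{1/2}(m)\,\sigma(f(kmuk^{-1}))\,du\,dk$ integrated against $\sigma$, which is precisely $\Theta_\sigma(f^M)$. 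Applying this with $\sigma = \Delta(\pi_M)$ gives $\Theta_{\Delta(\pi)}(f) = \Theta_{\Delta(\pi_M)}(f^M)$, as claimed.

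The main obstacle is not conceptual but a matter of being careful with two bookkeeping issues: first, that the multisegment produced in Remark~\ref{remark: standard identification multisegments} genuinely matches the multisegment of $\pi$ obtained directly from the parameter $\phi$ (i.e.\ that lifting $\phi_M$ to $G$ and reading off the Zelevinsky data commutes with the concatenate-and-reorder procedure) — this is exactly the compatibility implicitly used in Proposition~\ref{prop: induction and langlands functorialty}, so it may simply be cited; and second, the convergence and unfolding in the character computation, which is standard but requires $f\in C^\infty_c(G(F))$ and the admissibility of $\sigma$ to make $\sigma(f^M)$ of finite rank. Both are routine given the references, so the lemma follows.
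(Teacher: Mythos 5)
Your proposal is correct and follows essentially the same route as the paper: the paper also reduces the claim to the identity $\Theta_{\Ind_P^G(\Delta(\pi_M))}(f)=\Theta_{\Delta(\pi_M)}(f^M)$ (citing van Dijk's induced-character formula, which you re-derive by unfolding and attribute to Clozel), combined with the identification $[\Ind_P^G(\Delta(\pi_M))]=[\Delta(\pi)]$ from Proposition~\ref{prop:Ind standard} and Remark~\ref{remark: standard identification multisegments} and the fact that equal classes in the Grothendieck group have equal distribution characters. The only difference is cosmetic ordering of the two steps, so nothing further is needed.
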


\begin{proof}

We show this by direct calculation. Recall that $\pi_M \in \Pi_{\lambda_M}(M)$ is matched with a $\pi \in \Pi_{\lambda}(G)$ by matching their multisegments, in the sense of Remark \ref{remark: standard identification multisegments}. Set $\tau=\Ind_P^G(\pi_M)$ where $P$ is the standard parabolic subgroup of $G$ with Levi subgroup $M$. Recall that $K$ is the maximal compact subgroup of $G$, and we have $G(F)=K\, M(F)\, N(F)$. After making all the appropriate choices for Haar measures, we have
\[
    \begin{array}{rlr}
&\Theta_{\Delta(\pi_M)}(f^M)\\
    &= \int_{M(F)} f^M(m)\theta_{\Delta(\pi_M)}(m)\, dm
        & \text{by \eqref{big theta small theta}},
            \\
    &= \int_{M(F)} \delta_{P}^{1/2}(m) \int_{K} \int_{N(F)} f(kmuk^{-1})\, du\, dk \, \theta_{\Delta(\pi_M)}(m) \, dm
        & \text{\hskip10pt definition of $f^M$},
            \\        
    &= \delta_{P}^{1/2}(m) \int_K \int_{N(F)} \int_{M(F)} \theta_{\Delta(\pi_M)}(m)f(kmuk^{-1})\,  dm \, dn \, dk 
        & \text{Fubini-Tonelli},
            \\  
    &= \Theta_{\tau}(f),        
\end{array}
\]
where the last equality follows from \cite{vanDijk}*{Theorem 2} paired with the remark at the end of Section 5 in \textit{loc. cit}. We know from Proposition \ref{prop:Ind standard} that $[\tau]=[\Ind_P^G(\Delta(\pi_M))]=[\Delta(\pi)]$. In this sense, the transfer of functions $f \mapsto f^M$ matches distribution characters of standard representations at the level of Grothendiek groups.
\end{proof}

Passing from distributions build from irreducible representations to the Grothendieck group of these representations, Langlands-Shelstad transfer defines a linear transformation 
\[
\op{LS} : K_\CC \Rep_{\lambda_M}(M) \to K_\CC \Rep_{\lambda}(G).
\]
This linear transformation sends standard representations to standard representations exactly as in Proposition \ref{prop:Ind standard} and Remark \ref{remark: standard identification multisegments}. Thus, this is another way to characterize endoscopic transfer, analogous to Proposition \ref{prop: induction and langlands functorialty}.
\begin{proposition}\label{prop:Langlands shelstad endoscopic transfer}
     Let $M \simeq \GL_{m_1} \times \GL_{m_2}\times \cdots \times \GL_{m_k}$ be a Levi subgroup of $G$. Let $P$ be the standard parabolic subgroup with Levi component $M$. Then, for any $[\pi] \in K_{\CC}\Rep_{\lambda}(M)$, 
     \[\Lift_M^G([\pi])=\LS([\pi]).\]
 \end{proposition}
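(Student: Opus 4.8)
The plan is to reduce Proposition~\ref{prop:Langlands shelstad endoscopic transfer} to the already-established identification $\Lift_M^G = \Ind_P^G$ of Proposition~\ref{prop: induction and langlands functorialty}, by showing that the Langlands--Shelstad transfer $\LS$ likewise agrees with $\Ind_P^G$ on the Grothendieck group $K_\CC\Rep_{\lambda_M}(M)$. Since all three linear transformations $\Lift_M^G$, $\Ind_P^G$, and $\LS$ are defined on a finite-dimensional $\CC$-vector space with basis given by the classes of standard representations $\{[\Delta(\pi_M)] : \pi_M \in \Pi_{\lambda_M}(M)\}$, it suffices to check that $\LS$ and $\Ind_P^G$ agree on this basis. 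This is precisely the content of the calculation already carried out: Lemma~\ref{lemma: LS matches standards with standards} shows $\Theta_{\Delta(\pi_M)}(f^M) = \Theta_{\Delta(\pi)}(f)$ for all $f \in C^\infty_c(G(F))$, where $\pi$ is obtained from $\pi_M$ by matching multisegments, and moreover the proof of that lemma identifies $\Theta_{\Delta(\pi_M)}(f^M) = \Theta_{\Ind_P^G(\Delta(\pi_M))}(f)$ via \cite{vanDijk}*{Theorem 2}, together with $[\Ind_P^G(\Delta(\pi_M))] = [\Delta(\pi)]$ from Proposition~\ref{prop:Ind standard}.

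Concretely, I would argue as follows. First, recall from the discussion preceding the proposition that $\LS : K_\CC\Rep_{\lambda_M}(M) \to K_\CC\Rep_\lambda(G)$ is the linear transformation characterized by the property that for each irreducible $\pi_M$, the distribution character $\Theta_{\LS(\Delta(\pi_M))}$ equals the distribution $f \mapsto \Theta_{\Delta(\pi_M)}(f^M)$. By Lemma~\ref{lemma: LS matches standards with standards} and its proof, this distribution equals $\Theta_{\Ind_P^G(\Delta(\pi_M))}$, hence, since a finite-length representation of $G(F)$ is determined up to semisimplification (equivalently, its class in $K_\CC\Rep_\lambda(G)$) by its distribution character, we obtain the equality $\LS([\Delta(\pi_M)]) = [\Ind_P^G(\Delta(\pi_M))]$ in $K_\CC\Rep_\lambda(G)$. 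Next, by Proposition~\ref{prop:Ind standard} and Remark~\ref{remark: standard identification multisegments}, $[\Ind_P^G(\Delta(\pi_M))] = [\Delta(\pi)]$ where $\pi = Q(\alpha)$ is the irreducible representation obtained by concatenating and reordering the multisegments of the components of $\pi_M$. On the other hand, Proposition~\ref{prop: induction and langlands functorialty} gives $\Lift_M^G([\pi]) = [\Ind_P^G(\pi)]$ for all $[\pi] \in K_\CC\Rep_\lambda(M)$; applied to $[\pi_M] = \sum_i m_i [\Delta(\pi_{M,i})]$ expanded in the standard basis and using exactness of $\Ind_P^G$, this shows $\Lift_M^G$ and $\LS$ both send $[\Delta(\pi_M)]$ to $[\Ind_P^G(\Delta(\pi_M))]$.

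Finally, since $\Lift_M^G$ and $\LS$ are $\CC$-linear and agree on the basis $\{[\Delta(\pi_M)]\}$ of $K_\CC\Rep_{\lambda_M}(M)$, they agree on all of $K_\CC\Rep_{\lambda_M}(M)$; that is, $\Lift_M^G([\pi]) = \LS([\pi])$ for every $[\pi] \in K_\CC\Rep_{\lambda_M}(M)$, which is the assertion of the proposition. I expect the only genuine subtlety to be bookkeeping: ensuring that the representation $\pi$ produced by Langlands--Shelstad transfer is literally the same $\pi$ (matched by multisegments) that appears in Propositions~\ref{prop:Lift}, \ref{prop:Ind standard} and \ref{prop: induction and langlands functorialty}, so that the three characterizations of endoscopic lifting are compatible. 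This is handled by the explicit multisegment combinatorics of Remark~\ref{remark: standard identification multisegments} and the van Dijk character formula, so no new input is required beyond what has already been proved; the proposition is essentially a transitivity statement assembling Lemma~\ref{lemma: LS matches standards with standards}, Proposition~\ref{prop:Ind standard}, and Proposition~\ref{prop: induction and langlands functorialty}.
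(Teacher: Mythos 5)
Your argument is correct and is essentially the paper's own: the paper gives no separate proof for this proposition, relying exactly on the assembly you describe — Lemma~\ref{lemma: LS matches standards with standards} (via van Dijk's formula) together with Proposition~\ref{prop:Ind standard} shows $\LS$ sends $[\Delta(\pi_M)]$ to $[\Delta(\pi)]$, while Propositions~\ref{prop:Lift} and \ref{prop: induction and langlands functorialty} show $\Lift_M^G$ does the same, so the two linear maps agree on the basis of standard representations. Your added remark that distribution characters determine classes in $K_\CC\Rep_\lambda(G)$ (linear independence of irreducible characters) is the only point the paper leaves implicit, and it is standard.
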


Finally, we show that LS lifts A-packets from $M$ to A-packets of $G$. This follows immediately once we recall that $\Ind_P^G(\pi_{\psi_M})\simeq \pi_{\psi}$ from the proof of Proposition \ref{prop: Lift on irreducibles of A type}. Now, we may re-purpose the proof of Lemma \ref{lemma: LS matches standards with standards}, to assert the following.
 \begin{proposition}\label{LS matches A packets with A packets}
Langlands-Shelstad transfer matches $\pi_{\psi_M}$ with $\pi_{\psi}$, in the sense that 
\begin{equation*}
    \Theta_{\pi_{\psi_M}}(f^M)
    =
    \Theta_{\pi_{\psi}}(f)
\end{equation*}
for any $f \in C_c^{\infty}(G(F))$.
\end{proposition}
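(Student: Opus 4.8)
The plan is to re-use verbatim the computation from the proof of Lemma~\ref{lemma: LS matches standards with standards}, observing that the only structural feature of $\Delta(\pi_M)$ used there was that it is a parabolically induced representation whose character behaves well under the integral defining $f \mapsto f^M$. Concretely, first I would set $\tau_\psi \coloneqq \Ind_P^G(\pi_{\psi_M})$, where $P$ is the standard parabolic of $G$ with Levi component $M$, and recall from the proof of Proposition~\ref{prop: Lift on irreducibles of A type} that $[\Ind_P^G(\pi_{\psi_M})] = [\pi_\psi]$ in $K_{\CC}\Rep_\lambda(G)$; in fact $\Ind_P^G(\pi_{\psi_M})$ is already irreducible and isomorphic to $\pi_\psi$ (as $\pi_{\psi_M}$ is a product of unitary Speh representations and such inductions are irreducible by \cite{Atobe}*{Section 2.4}). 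Since $[\pi_1]=[\pi_2]$ in $K_{\CC}\Rep_\lambda(G)$ forces $\Theta_{\pi_1}=\Theta_{\pi_2}$, it suffices to prove $\Theta_{\pi_{\psi_M}}(f^M) = \Theta_{\tau_\psi}(f)$ for all $f\in C_c^\infty(G(F))$.

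For that equality I would run exactly the chain of identities in the proof of Lemma~\ref{lemma: LS matches standards with standards}: expand $\Theta_{\pi_{\psi_M}}(f^M)$ using \eqref{big theta small theta} as an integral of $\theta_{\pi_{\psi_M}}(m)\, f^M(m)$ over $M(F)$; substitute the explicit formula $f^M(m) = \delta_P^{1/2}(m)\int_K\int_{N(F)} f(kmuk^{-1})\,du\,dk$; interchange the order of integration using Fubini–Tonelli (legitimate since $f$ is compactly supported and $\theta_{\pi_{\psi_M}}$ is locally integrable, and the outer integral is effectively over a compact set); and then recognise the resulting triple integral as the Harish-Chandra character of the parabolically induced representation $\Ind_P^G(\pi_{\psi_M})$ evaluated at $f$, via the Weyl integration / van Dijk character formula \cite{vanDijk}*{Theorem 2} together with the remark at the end of Section~5 of \emph{loc. cit.} This gives $\Theta_{\pi_{\psi_M}}(f^M) = \Theta_{\tau_\psi}(f)$, and combining with $\Theta_{\tau_\psi} = \Theta_{\pi_\psi}$ completes the proof.

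The only point requiring a little care — and the place I expect to be the main obstacle — is making sure the hypotheses of the character-of-induced-representation formula are met for $\pi_{\psi_M}$ rather than for a standard module: the cited result in \cite{vanDijk} is stated for admissible representations of $M(F)$, and $\pi_{\psi_M}$ is certainly admissible (it is irreducible unitary), so this is genuinely routine, but one should note explicitly that no temperedness or standardness hypothesis is needed, only admissibility, which is exactly why the proof of Lemma~\ref{lemma: LS matches standards with standards} transfers without change. Everything else is a formal manipulation of absolutely convergent integrals.
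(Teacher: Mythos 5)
Your proposal is correct and matches the paper's argument: the paper likewise recalls $\Ind_P^G(\pi_{\psi_M})\simeq\pi_\psi$ from the proof of Proposition~\ref{prop: Lift on irreducibles of A type} and then re-purposes the van Dijk character computation from the proof of Lemma~\ref{lemma: LS matches standards with standards}, which only requires admissibility of the representation of $M(F)$. No gaps; your extra remark about why no temperedness or standardness is needed is exactly the point.
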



\begin{remark}
While it may appear that we are repackaging the results of Section \ref{ssec:Lift} in a slightly different language, our purpose here is to demonstrate that Langlands-Shelstad transfer will potentially give us the results in more general settings that we obtain from parabolic induction for general linear groups. In particular, we expect Langlands-Shelstad transfer to match standard representations with standard representations at the level of Grothendiek groups, just as parabolic induction does. However, parabolic induction of a representation from the $A$-packet of a Levi subgroup (or more generally, an endoscopic group) may not be irreducible, and is therefore not a good candidate for lifting A-packets of the Levi subgroup to A-packets of the group. In future work, we study Vogan's conjecture for a classical group $G$. Suppose $H$ is an endoscopic group of $G$. We propose an independent study of Langlands-Shelstad transfer to obtain the image $\op{Lift}_H^G([\pi_{\psi_H}])$. 
\end{remark}
\section{Examples}\label{sec:examples}

In this section, we provide examples to supplement the theory developed in the paper. Although this paper generalizes the situation from a simple Arthur parameter to an arbitrary parameter, we begin with a simple parameter and then move on to sums of simple parameters. 

\begin{example}[Steinberg $\GL_2$]\label{example:Steinberg GL2}
In this example, we work with a simple Arthur parameter, calculate the spectral and geometric multiplicity matrices, and demonstrate Hypothesis \ref{hypothesis} in this case. For $G=\GL_2$ over $F$, consider the Arthur parameter
$\psi : W''_F \to \dualgroup{G}$ defined by
\[\psi(w,x,y)=\Sym^1(x).\]Then, $\phi_{\psi}(w,x)=\Sym^1(x)$ and $\lambda(w) = \diag(|w|^{1/2}, |w|^{-1/2})$.

We start with the spectral side: $\Rep^\text{fl}_\lambda(G)$ contains exactly two irreducible representations - the trivial representation $\pi_0$ and the Steinberg representation $\pi_1$. The Steinberg representation is its own standard representation because it is tempered, so  $\Delta(\pi_1) = \pi_1$. 
The standard representation for the trivial representation $\pi_0$ is $\Delta(\pi_0) = \Ind_B^G(\chi)$, where $\chi(\diag(t_1, t_2)) = |t_1|^{1/2}|t_2|^{-1/2}$ and $B$ is the standard Borel subgroup of $\GL_2$. 
From the short exact sequence
\[
\begin{tikzcd}
0 \arrow{r} & \pi_0 \arrow{r} & \Ind_B^G(\chi) \arrow{r} & \pi_1 \arrow{r} & 0
\end{tikzcd}
\]
we see that $\pi_0$ and $\pi_1$ both appear in $\Delta(\pi_0)$ with multiplicity $1$, so
\begin{eqnarray*}\, 
[\Delta(\pi_0)] &=& [\pi_0] + [\pi_1], \text{ and}\\ \,
[\Delta(\pi_1)] &=& [\pi_1]
\end{eqnarray*}
in $K\Rep^\text{fl}_\lambda(G)$. Thus, we have
\[
m = 
\begin{bmatrix}
1 & 0\\
1 & 1
\end{bmatrix}.
\]

Now we describe the geometry. 
\[ V_\lambda= \left\{\begin{pmatrix}0&x\\0&0\end{pmatrix}: \text{ }x\in \CC \right\}\simeq\mathbb{A}^1_{\CC},\] 

and \[H_\lambda = \GL_1(\CC)\times \GL_1(\CC),\] with action $s\cdot x = s_1 x s_2^{-1}$, where $s=(s_1,s_2)$.
The two $H_\lambda$-orbits in $V_\lambda$ are $C_0 = \{ 0 \}$ and $C_1 = \{ x\in \mathbb{A}^1 \ : \ x\ne 0 \}$, and simple objects in $\Per_{H_\lambda}(V_\lambda)$ are $\IC(\1_{C_0})$ and $\IC(\1_{C_1})$. 
In order to compute the matrix $c$, we pick base points $x_0 \in C_0$ and $x_1\in C_1$ and compute the stalks: The sheaf complex $\IC(\1_{C_0})$ is the skyscraper sheaf $\1_{C_0}^\natural$ at $C_0$ in degree $0$ while $\IC(\1_{C_1})$ is the constant sheaf on $V_\lambda$ shifted by $1$, $\1_{V}[1]$. 
It follows that 
\begin{eqnarray*}\,
[\IC(\1_{C_0})] &=& [\1_{C_0}^\natural] \\\, 
(-1)[\IC(\1_{C_1})] &=& [\1_{C_0}^\natural] + [\1_{C_1}^\natural],
\end{eqnarray*}
in $K\Per_{H_\lambda}(V_\lambda)$, so
\[
c = 
\begin{bmatrix}
1 & 1\\
0 & 1
\end{bmatrix}.
\]
It is now clear that $m = \,^tc$, as predicted by Hypothesis~\ref{hypothesis}.
\end{example}

\begin{example}\label{example:braden gl4}
We now consider an Arthur parameter of small dimension which is not simple. We show the computation of the geometric and spectral multiplicity matrices $c$ and $m$. We will also see in the next two examples that the Vogan variety for this parameter does not decompose into a product of Vogan varieties, thus it is an example of the type of parameter this paper is dealing with.

Consider the group $\GL_4(F)$ and the Arthur parameter 
\[\psi(w,x,y)=\Sym^1(x) \oplus \Sym^1(y).\]
The infinitesimal parameter $\lambda_\psi$ is given by 
\[
\lambda_\psi(w) =
\begin{bmatrix} 
|w|^{1/2} & 0 & 0 & 0 \\
0 & |w|^{-1/2} & 0 & 0  \\
0 & 0 & |w|^{1/2} & 0 \\
0 & 0 & 0 & |w|^{1/2}
\end{bmatrix}.
\]
We can replace $\lambda_{\psi}$ by an element in its $\GL_4(\CC)$-conjugacy class - this will not change the geometry. Thus, we apply the permutation $(2\hspace{1mm}3)$ to $\lambda_{\phi_{\psi}}$  and drop the subscript $\psi$ from the notation to get
\[
\lambda(w) =
\begin{bmatrix} |w|^{1/2}&0&0&0\\0&|w|^{1/2}&0&0\\0&0&|w|^{-1/2}&0\\0&0&0&|w|^{-1/2}
\end{bmatrix}.
\]
Let us do the geometric side first. The above rearrangement enables us to easily compute the Vogan variety and the group action. 
\[V_{\lambda}=
\left\{ 
\begin{bmatrix}
0_{} & X \\
0_{} & 0_{}
\end{bmatrix}: X \in \op{Mat}_2(\CC) \right\}
\cong
 \op{Mat}_2(\CC).
\]
and \[H_{\lambda} = \GL_2(\CC) \times \GL_2(\CC).\]
The action of $H_\lambda$ on $V_{\lambda}$ is given by $$(g_1,g_2)\cdot X=g_1Xg_2^{-1}.$$ Thus, the rank of $X$ completely determines its $H_\lambda$-orbit. There are $3$ orbits - $C_0,C_1,$ and $C_2$ consisting of matrices of matrices of ranks $0,1$ and $2$ respectively. Note that $C_1$ is the orbit corresponding to $\phi_{\psi}$, so we set $C_{\psi}=C_1$.

In order to find the matrix $c_{\lambda}$ in this case, pick base points $x_0\in C_0$, $x_1\in C_1$ and $x_2\in C_2$ and consider the stalks of simple $\IC(\1_C)$. Since $\IC(\1_{C_0})$ is the skyscraper sheaf at $0\in V_\lambda$, its stalks are easy to compute: 
$\mathcal{H}^\bullet_{x_0}\IC(\1_{C_0}) = \1[0]$;
$\mathcal{H}^\bullet_{x_1}\IC(\1_{C_0}) = 0$; and
$\mathcal{H}^\bullet_{x_1}\IC(\1_{C_0}) = 0$.
Likewise, since $\IC(\1_{C_2})$ is the constant sheaf on $V_\lambda$ shifted by $4$, we have 
$\mathcal{H}^\bullet_{x}\IC(\1_{C_2}) = \1[4]$ for every $x\in V_\lambda$.
Only $\IC(\1_{C_1})$ is interesting - its stalks are given by
\begin{eqnarray*}
\mathcal{H}^\bullet_{x_0} \IC(\1_{C_1}) &=& H^\bullet(\mathbb{P}^1)[3] = \1[1] \oplus \1[3]\\
\mathcal{H}^\bullet_{x_1}\IC(\1_{C_1}) &=& \1[3] \\
\mathcal{H}^\bullet_{x_2}\IC(\1_{C_1}) &=& 0. 
\end{eqnarray*}
In $K\Per_{H_\lambda}(V_\lambda)$ we now have
\begin{eqnarray*}\,
[\IC(\1_{C_0})] &=& [\1_{C_0}^\natural] \\ \,
(-1)[\IC(\1_{C_1})] &=& \hskip-5pt 2 [\1_{C_0}^\natural] + [\1_{C_1}^\natural] \\ \,
[\IC(\1_{C_2})] &=& [\1_{C_0}^\natural] + [\1_{C_1}^\natural] + [\1_{C_2}^\natural].
\end{eqnarray*}
Therefore,
\[
c_\lambda=
\begin{bmatrix}
1 & 2 & 1\\
0 & 1 & 1\\
0 & 0 & 1
\end{bmatrix}.
\]
On the spectral side, $\Rep^\text{fl}_\lambda(G)$ contains exactly three irreducible representations, all appearing in $\Ind_{B}^{G}(\chi_\lambda)$ for $\chi_\lambda(\diag(t_1, t_2,t_3,t_4)) = |t_1 t_2|^{1/2}|t_3 t_4|^{-1/2}$. The these three irreducible representations are: the unique irreducible quotient $\pi_0$ of $\Ind_{B}^{G}(\chi)$ and two irreducible subrepresentations $\pi_1$ and $\pi_2$, of which only $\pi_2$ is tempered. Note that $\pi_{\psi}=\pi_1$ as it corresponds to the parameter $\phi_{\psi}$. The standard representations for $\pi_0$, $\pi_1$ and $\pi_2$ are given as follows, where $P_0$ is the standard Borel, $P_1$ is the standard parabolic with Levi $\GL_2\times \GL_1^2$ and $P_2$ is the standard parabolic with Levi $\GL_2\times \GL_2$:
\begin{eqnarray*}
\Delta(\pi_0) &=& \Ind_{P_0}^{G}(\chi_\lambda) \\
\Delta(\pi_1) &=& \Ind_{P_1}^{G}(\St_2\otimes\chi_1) \\
\Delta(\pi_0) &=& \Ind_{P_2}^{G}(\St_2\otimes \St_2) .
\end{eqnarray*}
Here, $\St_2$ is Steinberg for $\GL_2(F)$ and $\chi_1$ is the character $\chi$ appearing in Example~\ref{example:Steinberg GL2}.
%
In fact, $\Ind_{B}^{G}(\chi)$ is a length-four representation and $\pi_1$ appears with multiplicity $2$ in $\Ind_{B}^{G}(\chi)$:
\[
    [\Delta(\pi_0)] =  [\pi_0] + 2[\pi_1] + [\pi_2]
\]
Being tempered, $\pi_2$ is its own standard representation, so $[\Delta(\pi_2)] = [\pi_2]$.
In fact, 
\[
\pi_2 = \Ind_{P_2}^{G}(\St_{\GL_2}\otimes\St_{\GL_2}),
\]
which can be used to see that
\[
\begin{tikzcd}
    0 \arrow{r} & \pi_2 \arrow{r} & \Delta(\pi_1) \arrow{r} & \pi_1 \arrow{r} & 0,
\end{tikzcd}
\]
is a short exact sequence in $\Rep^\text{fl}_\lambda(G)$. We therefore have
\begin{eqnarray*}\,
    [\Delta(\pi_1)] &=& [\pi_1] + [\pi_2]
\end{eqnarray*}
in $K\Rep^\text{fl}_{\lambda}(G)$.
Now we know the matrix $m$:
\[
m_\lambda = 
\begin{bmatrix}
1 & 0 & 0\\
2 & 1 & 0\\
1 & 1 & 1
\end{bmatrix}.
\]
We see that $m_{\lambda} = \,^tc_{\lambda}$, yet another demonstration of Hypothesis~\ref{hypothesis}.

\end{example}

\begin{example}\label{example: levi braden gl4}
In this example, we discuss the geometry of the Arthur parameter for the Levi subgroup carved out by the parameter in Example \ref{example:braden gl4}, and compute $c_{\lambda_M}$ and $m_{\lambda_M}$.
Once again, we consider the Arthur parameter from Example \ref{example:braden gl4}:
\[\psi(w,x,y)=\Sym^1(x)\oplus \Sym^1(y)\]
Following the recipe of Section~\ref{introduction}, this picks out the Levi subgroup $M=\GL_2\times \GL_2$ of $\GL_4$, with associated simple Arthur parameters $\psi_1(w,x,y)= \Sym^1(x)\otimes \Sym^0(y)$ and $\psi_2(w,x,y)= \Sym^0(x)\otimes \Sym^1(y)$ which correspond in turn to Langlands parameters $\phi_1, \phi_2$ and infinitesimal parameters $\lambda_1,\lambda_2$, respectively. We use $\psi_M$ and $\lambda_M$ to denote the Arthur parameter for $M$. 

Then $V_{\lambda_M}=V_{\lambda_1} \times_{} V_{\lambda_2}$ consists of elements of the type 
\[(x_0,x_1) \coloneqq \left(\begin{bmatrix}0&x_0\\0&0\end{bmatrix} , \begin{bmatrix}0&x_1\\0&0\end{bmatrix}\right),\]
the group $H_{\lambda_M}=H_{\lambda_1} \times_{} H_{\lambda_1}$ consists of elements of the type 
\[
(t,s) \coloneqq \left(\begin{bmatrix}t_1&0\\0&t_2\end{bmatrix} , \begin{bmatrix}s_1&0\\0&s_2\end{bmatrix}\right),\]
and the action is given by
\[
(t,s)\cdot (x_0,x_1) = (t_1x_0t_2^{-1}, s_1x_1s_2^{-1}).
\]
There are four orbits depending on whether or not $x_i=0$ for $i=0,1$. We denote them as $C_{00}, C_{10}, C_{01}$ and $C_{11}$, where $C_{ij}$ corresponds to the $H_{\lambda_M}$-orbit of $(x_0,x_1)=(i,j)$.

To identify $V_{\lambda_M}$ as a subspace of $V_{\lambda}$,
we consider each element in $V_{\lambda_M}$ as a block diagonal $4 \times 4$ matrix in the obvious way, then apply the same permutation $(2\hspace{1mm}3)$ as before. The variety is still denoted $V_{\lambda_M}$ and its elements are identified with matrices of the type
\[\begin{bmatrix}0_{2 \times 2}&X\\0_{2\times 2}&0_{2\times 2}\end{bmatrix} \text{ where } X=\begin{bmatrix}x_0&0\\0&x_1\end{bmatrix}.\]
We do the same to elements of $H_{\lambda_M}$ to identify it with a torus in $\GL_4$,  with elements as matrices
\[\begin{bmatrix}\operatorname{diag}(t_1,s_1)&0_{2 \times 2}\\0_{2\times 2}&\operatorname{diag}(t_2,s_2)\end{bmatrix} \text{ where } t_i,s_i \in \CC^{\times}.\]
The conjugation action still takes $x_0 \mapsto t_1x_0t_2^{-1}$; likewise $x_1 \mapsto s_1x_1s_2^{-1}$. Thus, the embedding $V_{\lambda_M} \xhookrightarrow{} V_{\lambda}$ is $H_{\lambda_M}$-equivariant.
We continue to use the notation $C_{ij}$ for the $H_{\lambda_M}$-orbits . The orbit $C_{\psi_M}$ of type $\psi_M$ is $C_{10}$. At this point, we encourage the reader to think about the restriction of the orbits $C_0,C_1,C_2$ from Example \ref{example:braden gl4} to $V_{\lambda_M}$. In particular, observe that $C_1$ restricts to $C_{10} \sqcup C_{01}$. 

Let us compute the matrix $c_{\lambda_M}$. From Example \ref{example:Steinberg GL2} we know \[c_{\lambda_i}= 
\begin{bmatrix}
    1&1\\0&1
\end{bmatrix},
\]
for $i=1,2$. This matrix can be interpreted as a change of basis matrix from the standard sheaves to shifted simple perverse sheaves. One easily sees that
\[c_{\lambda_M}=c_{\lambda_1}\otimes c_{\lambda_2}
= 
\begin{bmatrix}
    1&1\\0&1
\end{bmatrix}\otimes \begin{bmatrix}
    1&1\\0&1
\end{bmatrix}
=\begin{bmatrix}
    1&1&1&1\\
    0&1&0&1\\
    0&0&1&1\\
    0&0&0&1
\end{bmatrix}.
\]
This is a change of basis matrix from the shifted simple perverse sheaves \[
\{(-1)^{\dim C_{ij}}[\mathcal{IC}(\1_{C_{ij}})]: 0\leq i,j \leq 1\}
\]
to standard sheaves 
\[
\{[\1_{C_{ij}}^{\natural}]: 0\leq i,j \leq 1\}
\]
in $K\Per_{H_{\lambda_M}}(V_{\lambda_M})$. 

The change of basis matrix from standard sheaves to shifted simple perverse sheaves is therefore 
\[
c^{-1}_{\lambda_M}=\begin{bmatrix}
    1&-1&-1&-1\\0&1&0&-1\\0&0&1&-1\\0&0&0&1
\end{bmatrix}.
\]
On the spectral side, the orbits $C_{ij}$ correspond to irreducible representations $\pi_{ij}$ in $\Rep^{\op{fl}}_{\lambda_M}(M)$. Following an analogous calculation for the spectral multiplicity matrix using $m$ from Example \ref{example:Steinberg GL2}, we get 
\[m_{\lambda_M}^{-1}=\begin{bmatrix}
1 & 0 & 0 & 0 \\
0 & 1 & 1 & 0 \\
0 & 0 & 0 & 1 
\end{bmatrix}.\]
\end{example}
\begin{example}\label{Lift and pullback for gl4 braden and levi}
In this example we compute the endoscopic lifts of some irreducible representations, using only the definition of $\Lift_M^G$ from Section~\ref{ssec:Lift}, with $G=GL_4$, and $\psi$ and $M$ as in Examples \ref{example:braden gl4} and \ref{example: levi braden gl4}.

First, we calculate restriction of standard sheaves via
\[\varepsilon^*: K_{\CC}\Per_{H_{\lambda}}(V_{\lambda}) \to K_{\CC}\Per_{H_{\lambda_M}}(V_{\lambda_M}). \]
We note that 
\begin{eqnarray*}\, 
[\1_{C_0}^\natural \lvert_{V_{\lambda_M}}] &=& [\1_{C_{00}}^\natural]\\ \,
[\1_{C_1}^\natural\lvert_{V_{\lambda_M}}] &=& [\1^\natural_{C_{10}}]+[\1^\natural_{C_{01}}]\\ \,
[\1_{C_2}^\natural\lvert_{V_{\lambda_M}}] &=& [\1^\natural_{C_{11}}],
\end{eqnarray*}
because, respectively, $C_0\cap V_{\lambda_M} =C_{00}$, $C_1 \cap V_{\lambda_M} = C_{10} \sqcup C_{01}$, $C_2\cap V_{\lambda_M} = C_{11}$. 
Thus, the matrix for $\varepsilon^*$ with respect to the basis of standard sheaves is 
\[
[\varepsilon^*]_{\op{sts}}
=
\begin{bmatrix}
    1 & 0 & 0\\
    0 & 1 & 0\\
    0 & 1 & 0\\
    0 & 0 & 1
\end{bmatrix}.
\] 
From Example \ref{example:braden gl4}, recall that $c_{\lambda}$ is a change of basis matrix from shifted simple perverse sheaves to standard sheaves in $K\Per_{H_{\lambda}}(V_{\lambda})$. 
From Example~\ref{example: levi braden gl4} recall that $c^{-1}_{\lambda_M}$ is the change of basis matrix from standard sheaves to shifted simple perverse sheaves in $K\Per_{H_{\lambda_M}}(V_{\lambda_M})$.
Therefore, restriction of shifted simple objects from $K\Per_{H_{\lambda}}(V_{\lambda})$ to $K\Per_{H_{\lambda_M}}(V_{\lambda_M})$ via $\varepsilon^*$ is given by
\begin{align*}
[\varepsilon^*]_{\op{ssim}}
&=
c^{-1}_{\lambda_M} \ [\varepsilon^*]_{\op{sts}}\ c_{\lambda} \\ 
&= 
\begin{bmatrix}
    1 & -1 & -1 & 1\\
    0 & 1 & 0 & -1\\
    0 & 0 & 1 & -1\\
    0 & 0 & 0 & 1
\end{bmatrix}
\ 
\begin{bmatrix}
    1 & 0 & 0\\
    0 & 1 & 0\\
    0 & 1 & 0\\
    0 & 0 & 1
\end{bmatrix}
\
\begin{bmatrix}
1 & 2 & 1\\
0 & 1 & 1\\
0 & 0 & 1
\end{bmatrix} 
=
\begin{bmatrix}
    1 & 0 & 0\\
    0 & 1 & 0\\
    0 & 1 & 0\\
    0 & 0 & 1
\end{bmatrix}.
\end{align*}
This shows:
\begin{align*}
    \varepsilon^*[\IC(\1_{C_0})] &= [\IC(\1_{C_{00}})], \\
    \varepsilon^*[\IC(\1_{C_1})[-3]] &=  [\IC(\1_{C_{01}})[-1]] + [\IC(\1_{C_{01}})[-1]], \\
    \varepsilon^*[\IC(\1_{C_2})[-4]] &= [\IC(\1_{C_{11}})[-2]],
\end{align*}
so
\begin{align*}
    \varepsilon^*[\IC(\1_{C_0})] &= [\IC(\1_{C_{00}})], \\
    \varepsilon^*[\IC(\1_{C_1})] &=  [\IC(\1_{C_{01}})] + [\IC(\1_{C_{01}})], \\
    \varepsilon^*[\IC(\1_{C_2}) &= [\IC(\1_{C_{11}})].
\end{align*}

Now let us use this calculation, together with Hypothesis~\ref{hypothesis}, to calculate $\Lift_M^G[\sigma]$ for every $\sigma \in \Pi_{\lambda_M}(M)$:
\begin{align*}
[\Lift_M^G]_{\op{sim}}
&= \,^t[\varepsilon^*]_{\op{ssim}} \\
&= \,^t\left(
c^{-1}_{\lambda_M} \ [\varepsilon^*]_{\op{sts}}\ c_{\lambda}\right) \\
&= \,^t c_{\lambda}\ \,^t [\varepsilon^*]_{\op{sts}}\ \,^tc_{\lambda_M}^{-1} \\
&= m_{\lambda}\ \,^t [\varepsilon^*]_{\op{sts}}\ m_{\lambda_M}^{-1}\\
&= \begin{bmatrix}
1 & 0 & 0\\
2 & 1 & 0\\
1 & 1 & 1
\end{bmatrix} 
\begin{bmatrix}
1 & 0 & 0 & 0 \\
0 & 1 & 1 & 0 \\
0 & 0 & 0 & 1 
\end{bmatrix}
\begin{bmatrix}
    1 & 0 & 0 & 0\\
    -1 & 1 & 0 & 0\\
    -1 & 0 & 1 & 0\\
    1 & -1 & -1 & 1
\end{bmatrix}
= 
\begin{bmatrix}
1 & 0 & 0 & 0 \\
0 & 1 & 1 & 0 \\
0 & 0 & 0 & 1 
\end{bmatrix}.
\end{align*}
This shows:
\begin{align*}
    \Lift_M^G[\pi_{00}] &= [\pi_0] \\
    \Lift_M^G[\pi_{01}] &= [\pi_1] \\
    \Lift_M^G[\pi_{10}] &= [\pi_1] \\
    \Lift_M^G[\pi_{11}] &= [\pi_2] .
\end{align*}
\end{example}
In particular, $\Lift_M^G[\pi_{10}]=[\pi_1]$, in other words,     $\Lift_M^G[\pi_{\psi_M}] = [\pi_{\psi}]$, which is true in general for $\GL_n$ and we prove this in Proposition \ref{prop: Lift on irreducibles of A type}.

\begin{bibdiv}
\begin{biblist}

\bib{Achar:book}{book}{
   author={Achar, Pramod N.},
   title={Perverse sheaves and applications to representation theory},
   series={Mathematical Surveys and Monographs},
   volume={258},
   publisher={American Mathematical Society, Providence, RI},
   date={2021},
   pages={xii+562},
   isbn={978-1-4704-5597-2},
}

\bib{ABV}{book}{
   author={Adams, Jeffrey},
   author={Barbasch, Dan},
   author={Vogan, David A., Jr.},
   title={The Langlands classification and irreducible characters for real reductive groups},
   series={Progress in Mathematics},
   volume={104},
   publisher={Birkh\"{a}user Boston, Inc., Boston, MA},
   date={1992},
   pages={xii+318},
}

\bib{Arthur:book}{book}{
   author={Arthur, James},
   title={The endoscopic classification of representations},
   series={American Mathematical Society Colloquium Publications},
   volume={61},
   note={Orthogonal and symplectic groups},
   publisher={American Mathematical Society, Providence, RI},
   date={2013},
   pages={xviii+590},
   isbn={978-0-8218-4990-3},
   doi={10.1090/coll/061},
}

\bib{Arthur:unipotent-motivation}{article}{
   author={Arthur, James},
   title={Unipotent automorphic representations: global motivation},
   conference={
      title={Automorphic forms, Shimura varieties, and $L$-functions, Vol.
      I},
      address={Ann Arbor, MI},
      date={1988},
   },
   book={
      series={Perspect. Math.},
      volume={10},
      publisher={Academic Press, Boston, MA},
   },
   date={1990},
   pages={1--75},
   review={\MR{1044818}},
}

\bib{arthur1989unipotent}{article}{
  title={Unipotent automorphic representations: conjectures},
  author={Arthur, James},
  year={1989},
   book={
      series={Ast\'{e}risque},
      volume={},
      publisher={},
   },
}

\bib{Atobe}{article}{
   author={Atobe, Hiraku},
   title={Construction of local $A$-packets},
   journal={J. Reine Angew. Math.},
   volume={790},
   date={2022},
   pages={1--51},
   issn={0075-4102},
   review={\MR{4472864}},
   doi={10.1515/crelle-2022-0030},
}

\bib{BBD}{article}{
   author={Be\u{\i}linson, A. A.},
   author={Bernstein, J.},
   author={Deligne, P.},
   title={Faisceaux pervers},
   conference={
      title={Analysis and topology on singular spaces, I},
      address={Luminy},
      date={1981},
   },
   book={
      series={Ast\'{e}risque},
      volume={100},
      publisher={Soc. Math. France, Paris},
   },
   date={1982},
   pages={5--171},
}

\bib{Borel:Corvallis}{book}{
   author={Borel, A},
   title={Automorphic L-functions},
   series={Automprhic forms, representations, and $L$-functions Part 2 (Proc. Sympos. Pure Math, Corvallis XXXIII)},
   note={p. 27-61},
   publisher={American Mathematical Society, Providence, RI},
   date={1979},
   pages={27-61}
}


\bib{CG}{book}{
   author={Chriss, Neil},
   author={Ginzburg, Victor},
   title={Representation theory and complex geometry},
   series={Modern Birkh\"{a}user Classics},
   note={Reprint of the 1997 edition},
   publisher={Birkh\"{a}user Boston, Ltd., Boston, MA},
   date={2010},
   pages={x+495},
   isbn={978-0-8176-4937-1},
   doi={10.1007/978-0-8176-4938-8},
}

\bib{Clozel}{article}{
   author={Clozel, Laurent},
   title={Sur une conjecture de Howe. I},
   language={English, with French summary},
   journal={Compositio Math.},
   volume={56},
   date={1985},
   number={1},
   pages={87--110},
   issn={0010-437X},
}

\bib{CFMMX}{book}{
   author={Cunningham, Clifton},
   author={Fiori, Andrew},
   author={Moussaoui, Ahmed},
   author={Mracek, James},
   author={Xu, Bin},
   title={A-packets for p-adic groups by way of microlocal vanishing cycles of perverse sheaves, with examples},
   series={Memoirs of the American Mathematical Society},
   volume={276},
   date={2022},
   number={1353},
}

\bib{CFZ:cubics}{article}{
author={Cunningham, Clifton},
author={Fiori, Andrew},
author={Zhang, Qing},
title={A-packets for $G_2$ and perverse sheaves on cubics},
journal={Advances in mathematics},
volume={395},
date={2022},
}

\bib{CFZ:unipotent}{unpublished}{
author={Cunningham, Clifton},
author={Fiori, Andrew},
author={Zhang, Qing},
title={Toward the endoscopic classification of unipotent representations of $p$-adic $G_2$},
note={http://arxiv.org/abs/2101.04578},
date={2021},
}

\bib{CFK}{article}{
title={Appearance of the Kashiwara-Saito singularity in the representation theory of $p$-adic GL(16)},
author={Cunningham, Clifton},
author={Fiori, Andrew},
author={Kitt, Nicole},
journal={Pacific Journal of Mathematics},
date={2023},
note={https://arxiv.org/abs/2103.04538},
}

\bib{CR:irred}{unpublished}{
   author={Cunningham, Clifton},
   author={Ray, Mishty},
   title={Proof of Vogan's conjecture on A-packets: irreducible parameters for $p$-adic general linear groups},
   date={2022},
   note={https://arxiv.org/abs/2206.01027}
}


\bib{vanDijk}{article}{
   author={van Dijk, G.},
   title={Computation of certain induced characters of ${\germ p}$-adic
   groups},
   journal={Math. Ann.},
   volume={199},
   date={1972},
   pages={229--240},
   issn={0025-5831},
   doi={10.1007/BF01429876},
}


\bib{GM:book}{book}{
   author={Goresky, Mark},
   author={MacPherson, Robert},
   title={Stratified Morse theory},
   series={Ergebnisse der Mathematik und ihrer Grenzgebiete (3) [Results in
   Mathematics and Related Areas (3)]},
   volume={14},
   publisher={Springer-Verlag, Berlin},
   date={1988},
   pages={xiv+272},
   isbn={3-540-17300-5},
   doi={10.1007/978-3-642-71714-7},
}

\bib{GM:Lefschetz}{article}{
   author={Goresky, Mark},
   author={MacPherson, Robert},
   title={Local contribution to the Lefschetz fixed point formula},
   journal={Invent. Math.},
   volume={111},
   date={1993},
   number={1},
   pages={1--33},
   issn={0020-9910},
   doi={10.1007/BF01231277},
}

\bib{Illusie}{article}{
   author={Illusie, Luc},
   title={Around the Thom-Sebastiani theorem, with an appendix by Weizhe
   Zheng},
   journal={Manuscripta Math.},
   volume={152},
   date={2017},
   number={1-2},
   pages={61--125},
}



\bib{KZ}{article}{
   author={Knight, Harold},
   author={Zelevinsky, Andrei},
   title={Representations of quivers of type $A$ and the multisegment
   duality},
   journal={Adv. Math.},
   volume={117},
   date={1996},
   number={2},
   pages={273--293},
   issn={0001-8708},
}

\bib{knight1996representations}{article}{
  title={Representations of quivers of type A and the multisegment duality},
  author={Knight, Harold},
  author={Zelevinsky, Andrei},
  journal={Advances in mathematics},
  volume={117},
  number={2},
  pages={273--293},
  year={1996},
  publisher={New York: Academic Press, 1965-}
}

\bib{Konno}{article}{
   author={Konno, Takuya},
   title={A note on the Langlands classification and irreducibility of
   induced representations of $p$-adic groups},
   journal={Kyushu J. Math.},
   volume={57},
   date={2003},
   number={2},
   pages={383--409},
   issn={1340-6116},
   doi={10.2206/kyushujm.57.383},
}

\bib{kudla1994local}{article}{
  title={The local Langlands correspondence: the non-archimedean case},
  author={Kudla, Stephen S},
  journal={Motives (Seattle, WA, 1991)},
  volume={55},
  number={Part 2},
  pages={365--391},
  year={1994},
  publisher={American Mathematical Society Providence, RI}
}
\bib{langlands-shelstad}{article}{
   author={Langlands, R. P.},
   author={Shelstad, D.},
   title={On the definition of transfer factors},
   journal={Math. Ann.},
   volume={278},
   date={1987},
   number={1-4},
   pages={219--271},
   issn={0025-5831},
   review={\MR{909227}},
   doi={10.1007/BF01458070},
}

\bib{Lusztig:Cuspidal2}{article}{
   author={Lusztig, George},
   title={Cuspidal local systems and graded Hecke algebras. II},
   note={With errata for Part I [Inst. Hautes \'{E}tudes Sci. Publ. Math. No. 67
   (1988), 145--202;  MR0972345 (90e:22029)]},
   conference={
      title={Representations of groups},
      address={Banff, AB},
      date={1994},
   },
   book={
      series={CMS Conf. Proc.},
      volume={16},
      publisher={Amer. Math. Soc., Providence, RI},
   },
   date={1995},
   pages={217--275},
}

\bib{Massey}{article}{
   author={Massey, David B.},
   title={The Sebastiani-Thom isomorphism in the derived category},
   journal={Compositio Math.},
   volume={125},
   date={2001},
   number={3},
   pages={353--362},
}

\bib{MW:involution}{article}{
   author={M\oe glin, Colette},
   author={Waldspurger, Jean-Loup},
   title={Sur l'involution de Zelevinski},
   journal={J. Reine Angew. Math.},
   volume={372},
   date={1986},
   pages={136--177},
   issn={0075-4102},
}

\bib{Mracek}{book}{
   author={Mracek, James},
   title={Applications of Algebraic Microlocal Analysis in Symplectic
   Geometry and Representation Theory},
   note={Thesis (Ph.D.)--University of Toronto (Canada)},
   publisher={ProQuest LLC, Ann Arbor, MI},
   date={2017},
   pages={101},
   isbn={978-0355-53067-4},
}

\bib{Pyasetskii}{article}{
	Author = {Pjasecki\u\i , V. S.},
	journal = {Akademija Nauk SSSR. Funkcional\cprime nyi Analiz i ego Prilo\v zenija},
	Issn = {0374-1990},
	Number = {4},
	Pages = {85--86},
	Title = {Linear {L}ie groups that act with a finite number of orbits},
	Volume = {9},
	Year = {1975}
}

\bib{Riddlesden}{article}{
author={Riddlesden, Connor},
title={Combinatorial approach to ABV-packets for $GL_n$},
journal={MSc thesis, University of Lethbridge},
year={2022},
note={\url{https://opus.uleth.ca/handle/10133/6377}},
}

\bib{Solleveld:pKLH}{unpublished}{
author={Solleveld, Maarten},
title={Graded Hecke algebras, constructible sheaves and the p-adic Kazhdan--Lusztig conjecture},
note={\href{https://arxiv.org/pdf/2106.03196.pdf}{https://arxiv.org/pdf/2106.03196.pdf}},
date={2022}
}

\bib{Vogan:Langlands}{article}{
   author={Vogan, David A., Jr.},
   title={The local Langlands conjecture},
   conference={
      title={Representation theory of groups and algebras},
   },
   book={
      series={Contemp. Math.},
      volume={145},
      publisher={Amer. Math. Soc., Providence, RI},
   },
   date={1993},
   pages={305--379},
}

\bib{Vooys}{unpublished}{
author={Vooys, Geoff},
title={Equivariant Functors and Sheaves},
note={\href{https://arxiv.org/pdf/2110.01130.pdf}{https://arxiv.org/pdf/2110.01130.pdf}},
date={2023}
}



\bib{Z2}{article}{
   author={Zelevinsky, Andrei V.},
   title={Induced representations of reductive ${\germ p}$-adic groups. II.
   On irreducible representations of ${\rm GL}(n)$},
   journal={Ann. Sci. \'{E}cole Norm. Sup. (4)},
   volume={13},
   date={1980},
   number={2},
   pages={165--210},
}

\bib{zelevinskii1981p}{article}{
  title={p-adic analog of the kazhdan-Lusztig hypothesis},
  author={Zelevinskii, Andrei Vladlenovich},
  journal={Functional Analysis and Its Applications},
  volume={15},
  number={2},
  pages={83--92},
  year={1981},
  publisher={Springer}
}

\end{biblist}
\end{bibdiv}

\end{document}